\pgfplotsset{compat=1.13}
\frenchspacing \linespread{1.1}
\newlength{\defbaselineskip}
\newtheorem{definition}{Definition}[section]
\newtheorem{proposition}[definition]{Proposition}
\newtheorem{remark}[definition]{Remark}
\newtheorem*{Key words}{Key words}
\newtheorem{example}[definition]{Example}
\newtheorem*{Proof of Theorem Equivalence between realization functors}{Proof of Theorem \ref{Equivalence_between_realization_functors}}
\newtheorem{Proof of proposition Lif adjunction symmetric sequences}{Proof of proposition \ref{lift_adjonction_symmetric_sequence}}
\newtheorem*{Proof of Theorem Cofree coalgebras}{Proof of Theorem \ref{Cofree Cocommutative_Coalgebra}}
\newtheorem*{Proof of main theorem} {Proof of theorem \ref{Quillen_equivalence_operads}} 
\newtheorem{theorem}[definition]{Theorem}
\newtheorem*{Proof of Proposition CoFree Cosimplicial  Cocommutative Coalgebra} {Proof of Proposition \ref{coFree_Cocomutative_Coalgebra}} 
\newtheorem{corollary}[definition]{Corollary}
\newtheorem{mylemma}[definition]{Lemma}
\newtheorem*{Abstract}{Abstract}
\newtheorem*{Theorem A}{Theorem A}
\newtheorem*{Theorem B}{Theorem B}
\newtheorem*{Proposition A-1}{Proposition A-1}
\newtheorem*{Acknowledgements}{Acknowledgements}
\newtheorem*{Corollary A-1}{Corollary A-1}
\newtheorem*{Corollary A-2}{Corollary A-2}
\newtheorem*{Proof of Proposition Dn approximation finite cell} {Proof of Proposition \ref{Dn_approximation_theorem}}  
\newtheorem*{Proof of proposition cosimplicial resolution cell functor} {Proof of proposition \ref{Proposition Cosimplicial Resolution presented cell functor}} 
\newtheorem*{Proof of proposition Pro right module structure derivative} {Proof of proposition \ref{pro_right_module}} 
\newtheorem*{Proof of thm new model for derivatives Algebras} {Proof of theorem \ref{model_goowillie_derivative_functor_from_algebra}} 
\newtheorem*{Proof of proposition Restriction natural transformation to finite proposition} {Proof of Proposition \ref{Restriction natural transformation to finite proposition}} 
\newtheorem*{Proof of proposition Nat}{Proof of proposition \ref{Natural_Transformation_Exponential}}
\newtheorem*{Proof of theorem C_Delta}{ Proof of theorem \ref{coFree_Cocomutative_Coalgebra}}
\newtheorem*{Proposition D}{ Proposition \ref{Arone_ching_decomposition_of_functor}}
\newtheorem*{Proof of proposition}{Proof of proposition \ref{Right_comodule_derivatives}}
\newtheorem*{Proof of theorem}{Proof of theorem \ref{Nick_Kuhn}}
\newtheorem*{Proof of corollary}{Proof of corollary  \ref{Derivatives}}
\newtheorem*{Proof of Lemma derivative representable functor}{Proof of Lemma  \ref{Lemma Computing the derivatives of Representable functor}}
\newtheorem*{Proof of Chain Rule Ch}{Proof of proposition  \ref{Chain_Rule_Ch}}
\newtheorem*{Sketch of the proof} {Sketch of the proof of Proposition \ref{Projective model structure on functors}} 
\newtheorem*{Proof of Tower theorem} {Proof of theorem \ref{Taylor_Tower_Equal_Fake}} 
\newtheorem*{Proof of model derivative ch functors} {Proof of theorem \ref{Model_derivative_ch_functor_theorem}}
\newsavebox{\bparcould}\newlength{\lparcould}
 \theoremstyle{plain} \linespread{1} \makeindex
\newcommand{\thechapterwords}
{ \ifcase \thechapter\or Premier\or Deux\or Trois\or Quatre\or
	Cinq\or
	Six\or Sept\or Huit\or Neuf\or Dix\or Onze\fi}
\def\thickhrulefill{\leavevmode \leaders \hrule height 1ex \hfill \kern \z@}
\def\@makechapterhead#1{    \vspace*{15\p@}  {\parindent \z@ \centering \reset@font
		\thickhrulefill\quad
		\scshape \@chapapp{} \thechapterwords
		\quad \thickhrulefill
		\par\nobreak
		\vspace*{15\p@}        \interlinepenalty\@M
		\hrule
		\vspace*{15\p@}        \large \bfseries #1\par\nobreak
		\par
		\vspace*{15\p@}        \hrule
		\vskip 60\p@
	}}
	\def\@makeschapterhead#1{    \vspace*{15\p@}  {\parindent \z@ \centering \reset@font
			\thickhrulefill
			\par\nobreak
			\vspace*{15\p@}        \interlinepenalty\@M
			\hrule
			\vspace*{15\p@}        \large \bfseries #1\par\nobreak
			\par
			\vspace*{15\p@}        \hrule
			\vskip 60\p@
		}}
		\def\@makechapterhead#1{    \vspace*{15\p@}  {\parindent \z@ \centering \reset@font
				\thickhrulefill\quad
				\scshape \@chapapp{} \thechapterwords
				\quad \thickhrulefill
				\par\nobreak
				\vspace*{15\p@}        \interlinepenalty\@M
				\hrule
				\vspace*{15\p@}        \large \bfseries #1\par\nobreak
				\par
				\vspace*{15\p@}        \hrule
				\vskip 60\p@
			}}
\begin{document}

				\bigskip
				
				\begin{center}
					
					\textbf{\Large{Goodwillie calculus in the category of algebras over a chain complex  operad}}\\
					
					\bigskip
					
					\title{Equivalences of operads over symmetric monoidal categories}
					\bigskip
					MIRADAIN ATONTSA NGUEMO 
					\bigskip
					
					Universit\'{e} Catholique de Louvain, IRMP, Louvain La Neuve, 1348, Belgium

					\bigskip
					Email: miradain.atontsa@uclouvain.be
					\bigskip

					\bigskip
					\bigskip
				\end{center}
				\begin{Abstract}
					The goal of this paper is to furnish a literature on Goodwillie calculus for functors defined between categories which derive from chain complexes over a ground field $\Bbbk.$
 We characterize homogeneous functors $F: \mathcal{C} \longrightarrow \mathcal{D}$ where $\mathcal{C} ,\mathcal{D}= Ch$ (chain complexes), $Ch_+$(non-negatively graded chain complexes) or $\text{Alg}_\mathcal{O}$ (algebras over a chain complex operad $\mathcal{O}$). In the particular case when 		
	$\mathcal{D}= \text{Alg}_\mathcal{O},$ our characterization requires $\Bbbk$ to be of characteristics $0.$						
	
					We are then extending the results of Walter \cite{Walt06}  who studied in characteristics $0$ the chain complex cases and when $\mathcal{O}$ is the Lie operad.
				\end{Abstract}
				AMS Classification numbers.  Primary: 18D50, 55P65 ;  Secondary:  18G55,   55U35, 55U15
				\begin{Key words}
					Operads, algebras over an operad, model category, calculus of functors
				\end{Key words}
				
				\section*{Introduction}
	The chain complexes are over a ground field $\Bbbk.$ Let $\mathcal{O}$ be a fixed operad on $Ch_+.$
		In this paper, we give a characterization of homogeneous functors $F: \mathcal{C} \longrightarrow \mathcal{D},$ where $\mathcal{C}$ and $\mathcal{D}$ are either $\text{Alg}_\mathcal{O}, Ch_+$ or $Ch.$ This is an "algebraic" version of a couple of publications in Functor Calculus. It starts with  Goodwillie \cite{GIII03} when $\mathcal{C}$ and $\mathcal{D}$ are the category of pointed topological spaces or the associated category of spectra (S-modules of \cite{EKMM}). He proved that homogeneous functors $F$ are completely described in term of symmetric sequences (of spectra) $\partial_*F,$ called "Derivatives". The Functor Calculus for continuous functors was extended by Kuhn, in \cite{Kuhn07}, to the case the categories $\mathcal{C}$ and $\mathcal{D}$ respect the following conditions:
		\begin{enumerate}
			\item  The categories $\mathcal{C}$ and $\mathcal{D}$ are simplicial or topological pointed model categories;
			\item $\mathcal{C}$ and $\mathcal{D}$ are proper: the pushout of a weak equivalence by a cofibration is a weak equivalence, and dually for the pullbacks.
			\item filtered homotopy colimit in $\mathcal{D}$  commutes with finite homotopy limits.
		\end{enumerate}
		By inspection, one see that under good conditions on the functors, the Kuhn (or Goodwillie) Taylor tower construction can be defined  when only the conditions $2.$ and $3.$ are satisfied. 	In our cases the categories $\text{Alg}_\mathcal{O}, Ch_+$ and $Ch$ satisfy the Kuhn's requirements $2.$  and $3.$ We can therefore follow his lines to develop the approximation of  functors. On the other hand, to get a characterization theorem for homogeneous functors similar to Goodwillie's result, Kuhn needed condition 1. 
	In our case  even if the categories $Ch,$ $Ch_+$ and   $\text{Alg}_\mathcal{O}$ (by Hinich in  \cite[$§$ 4.8]{Hinich}) are simplicial categories,  there is not a genuine tensoring of these categories  over sSet. 	Hence, these are not simplicial model categories.
At this point, our  constructions will deviate a little from the literature and we will replace continuous functors by homotopy functors which is a weaker requirement.
		

			\subsection*{Main results} 
		We develop tools in the category $\text{Alg}_\mathcal{O}$ to analyze the Taylor tower of homotopy functors.  In fact we give an explicit model of the homotopy pullbacks and we deduce that any loop  $O$-algebra has a trivial $\mathcal{O}$-algebra structure when the ground field is in characteristics $0.$ On the other hand, we give an explicit model of homotopy pushouts. As a consequence, we deduce that the suspension of an $\mathcal{O}$-algebra is equivalent to a free $\mathcal{O}$-algebra. We apply these constructions to show that a homogeneous functor $F$ is completely described by a  symmetric sequences of unbounded chain complexes  denoted $\partial_*F.$ In fact we analyze the Taylor tower $P_nF$ of $F$ and  we show that under good conditions, there is a weak equivalence 	\begin{center}
								$D_nF(X)\simeq \Omega^{\infty} (\partial_*F\underset{h\Sigma_n}{\otimes} (\Sigma^{\infty}X)^{\otimes n}).$
							\end{center}
		where $D_nF=\text{hofib }(P_nF\longrightarrow P_{n-1}F).$ In our construction,  the pair $(\Sigma^{\infty}, \Omega^{\infty})$ has a different concept from the Kuhn's construction. Namely, in Kuhn's paper  \cite{Kuhn07}, given a simplicial category $\mathcal{D},$ the pair 
			\begin{center}
				$\Sigma^{\infty}: \mathcal{D} \rightleftarrows Spectra(\mathcal{D}): \Omega^{\infty}$
			\end{center}  is an adjoint pair. 
			As in $\cite{Basterra2005},$ when $\mathcal{D}=\text{Alg}_\mathcal{O},$  we will identify  the category $Spectra(\mathcal{D})$ with $Ch.$ We know that these two categories are related by a zig-zag of Quillen equivalences. This identification will  imply a non canonical modification in the construction of these functors. Roughly speaking,  $\Sigma^{\infty}: \text{Alg}_\mathcal{O} \longrightarrow Ch$ becomes the Quillen homology $TQ(-),$ and $\Omega^{\infty}: Ch \longrightarrow \text{Alg}_\mathcal{O} $ assigns to each chain complex  a trivial $\mathcal{O}$-algebra structure. The pair $(\Sigma^{\infty}, \Omega^{\infty})$ we get is no more an adjoint pair. We give more detail about these constructions in section \ref{Algebra_over_operad}. 
			
	When $\mathcal{C}$ and $\mathcal{D}$ are either $Ch$ or $Ch_+,$ or when $\mathcal{O}=Lie$  and the ground field is in characteristics 0, these constructions and results appear in \cite{Walt06}.

				\subsection*{Outline of the paper}
		In the first section \ref{Section Preliminaries}, we  briefly remind the preliminaries on the categories $\text{Alg}_\mathcal{O},Ch_+$ and $ Ch.$   
		 In section  \ref{Section Homotopy limits and homotopy colimits} we make an explicit construction of homotopy pullbacks and homotopy pushouts in $\text{Alg}_\mathcal{O}.$
			We remind	in section \ref{Section Goodwillie Calculus}  the Goodwillie approach in Functor calculus.
				 In section \ref{Section Characterisation of homogeneous functors}, we characterize homogeneous functors. Our method in that section is inspired by the Goodwillie's approach. Namely we  prove that homogeneous functors are infinite loop spaces (as in \cite[Thm 2.1]{GIII03}). We use the "stabilization" of the cross effect and at the end of this process, we obtain a similar (with Goodwillie) characterization of homogeneous functors using the Quillen Homology TQ(-) viewed as $\Sigma^{\infty}.$ 
				We compute	in section \ref{Section Example computing Goodwillie derivatives}  the derivatives of some functors. 
					In the last section \ref{Section Chain rule on the composition} , we prove that there is a chain rule property associated to the composition of two functors (composed on $Ch$).
				
			\subsection*{Future Work}	
				The work of this paper raises the question of extending the description of homogeneous functors to a classification of Taylor towers. This question was raised by Arone-Ching \cite{AC11} and they investigated the module structure on the collection of derivatives $\partial_*F$ over a certain operad. They built a tower which fails to be the Taylor tower of $F$ up to Tate cohomology. Since  Tate cohomologies vanish rationally ( see \cite{Kuhn2004}), this question is studied in our future work (\cite{MiradainGoodwillie2}) when the ground field $\Bbbk$ is of characteristics $0.$ 
				\begin{Acknowledgements}
					The  author is grateful to  Pascal Lambrechts and Greg Arone for their suggestions and  encouragement during this work. 		
				\end{Acknowledgements}

				
				\section{preliminaries}\label{Section Preliminaries}
	
\subsection{Background on chain complexes}
All chain complexes are over a field $\Bbbk $ of  any characteristics.
The purpose of this section is to fix conventions and review basic properties which are background of our constructions.

In this paper, we denote by $Ch$ \index{$Ch$} the category of  $\mathbb{Z}$- graded  chain complexes over $\Bbbk.$ 
This category has a symmetric monoidal structure. The tensor product of chain complexes $V,W\in Ch$ is defined by:
\begin{center}
	$(V\otimes W)_n:= \underset{p+q=n}{\oplus} V_p \otimes W_q$
\end{center}
with the  differential such that: $\forall x\otimes y \in V_p \otimes W_q, $ $d(x\otimes y)= d(x)\otimes y + (-1)^p x\otimes d(y).$
The unit of the monoid $-\otimes-,$ which we  denote abusively  $\Bbbk,$ is the chain complex having $\Bbbk$ in degree $0$ and is trivial in all other degrees.
The  tensor product $-\otimes-$ has a right adjoint $\underline{hom}(-,-)$ given by:
\begin{center}
	$	\underline{hom}(V,W):= \underset{i\in \mathbb{Z}}{\oplus} \underline{hom}^{i}(V,W)$\index{ $ \underline{hom}(-,-)$}
\end{center}
where $\underline{hom}^{i}(V,W)$ denotes the vector space of morphisms $f: V_* \longrightarrow V_{*+i}$ of degree $i.$

Similarly We denote by $Ch_{+} ,$\index{$ Ch_{+}$} the sub-category of  $Ch $ which consist of non negatively graded  chain complexes. 

\subsubsection*{Twisted chain complex}
Let $(V,d_V)$ be a chain complex. A \emph{twisting homomorphism} of degree $-1,$ $d:V\longrightarrow V $ is a morphism of graded vector spaces of degree $-1$ which is added to the internal differential $d_V$ to produce a new differential $d_V+d: V\longrightarrow V$ on $V.$ The equation $(d_V+d)^2=0$ is equivalent to the equation $d_V(d)+ d^2=0,$
with $d_V(d):=d_Vd+dd_V.$ 

\subsubsection*{Model category structure on $Ch_+$}
The category $Ch_+$ is a proper closed model category (for instance see \cite[$§$ 4.2]{GJ94}): 	weak equivalences are quasi-isomorphisms, 
 fibrations are surjections and cofibrations are injections.
In this model category, all objects are cofibrant and fibrant.

\subsection{Operads}
We denote by $FinSet$ the category whose objects are finite sets and whose morphisms are bijections.
We denote the category of all symmetric sequences in $Ch_+$ by $[FinSet, Ch_+]$ (in which morphisms are natural transformations).
 The composition $M\circ N,$ of the two symmetric sequences $M$ and $N,$ is defined by:
\begin{center}
	$(M\circ N)(J):= \underset{J=\underset{j\in J'}{\amalg}J_j}{\bigoplus} M(J') \otimes \underset{j\in J'}{\bigotimes}N(J_j).$
\end{center}
The coproduct here is taken over all unordered partitions, $\{J_j\}_{j\in J'},$  of $J.$
 The unit symmetric sequence $\mathbb{I}$ is given by 
\begin{center}
	$\mathbb{I}(J)=\Bbbk,$ if $|J|=1,$ and $\mathbb{I}(J)=0$ otherwise; 
\end{center}
\begin{definition}[Operads]
	An operad in $Ch_+$ is a monoid  over $([Finset, Ch_+], \circ, \mathbb{I}).$ A reduced operad is an operad $\mathcal{O}$ such that $\mathcal{O}(0)=0$ and $\mathcal{O}(1)=\Bbbk.$ In this paper, we only consider reduced operads in $Ch_+.$
\end{definition}

\subsection{Algebra over an operad}\label{Algebra_over_operad}
Let $\mathcal{O}$ be a reduced operad. 
An $\mathcal{O}-$algebra $X$ is a chain complex together with structure maps, for any $n\geq 0$: 
$ m_n : \mathcal{O}(n)\underset{\Sigma_n}{\otimes} X^{\otimes n}\longrightarrow X,$ 
satisfying the appropriate compatibility conditions.
Maps of $\mathcal{O}-$algebras are given by chain complex morphisms  $f:X\longrightarrow X'$ which are degree $0$ and preserve the $\mathcal{O}-$algebra structures of $X$ and $X'.$ The category of $\mathcal{O}$-algebras is denoted $\text{Alg}_\mathcal{O}.$ \index{$\text{Alg}_\mathcal{O}$}

\subsubsection*{Model category structure on $\text{Alg}_\mathcal{O}$}

One use this adjunction $\mathcal{O} (-): Ch_{+}\rightleftarrows \text{Alg}_\mathcal{O}: U$ , between the forgetful and the free functors, to define the projective model structure on $ \text{Alg}_\mathcal{O}$ (see \cite[Thm 4.4]{GJ94}).
Namely weak equivalences(resp. fibrations) of $\text{Alg}_\mathcal{O}$  are  equivalences (resp. fibrations) in the underlined category $ Ch_{+}.$ The cofibrations are morphisms having the right lifting property with respect to acyclic fibrations. In particular, cofibrant $\mathcal{O}$-algebras are retract of quasi-free algebras.

\subsection{Cooperad}\label{Cooperad}
The notion of cooperad is dual to the notion of operad. The dual notion consists of considering the opposite category $((Ch_{+})^{op}, \otimes, \mathbb{I}_{Ch}).$ We define the dual composition product $\widehat{\circ}$ of two symmetric sequences by replacing the coproduct in the definition \ref{Symmetric sequence} with a product. That is 
\begin{center}
	$ ( M\widehat{\circ}N)(J):=\underset{J=\underset{j \in J'}{\amalg} J_j}{\prod}M(J')\otimes \underset{j\in J'}{ \bigotimes}N(J_j).$
\end{center}
where the product is taken over all unordered partitions, $\{J_j\}_{j\in J'},$ of $J.$ 

A cooperad in $\mathcal{C}$ is a triple $(Q, m^c, \eta^c),$ where $Q$ is a symmetric sequence together with maps \begin{center}
	$m^c: Q\longrightarrow Q\widehat{\circ} Q$ and $\eta^c: Q\longrightarrow \mathbb{I}$
\end{center}
satisfying the co-associativity, the left and right co-unit condition.

A cooperad $Q$ is connected when $\widetilde{Q}:= ker(\eta^c)$ is concentrated strictly in positive degree. 

Since (finite) product and direct sum are equivalent in the underlying category $Ch_+,$ in the rest of this thesis, the dual composition product $\widehat{\circ}$ will simply be denoted $\circ.$

\subsection{Coalgebra over a cooperad}\label{Section Coalgebra over a cooperad}

Another dual analogy is the notion of the coalgebra over a cooperad. That is, any chain complex $Y$ together with a structure map, $\forall n,$ $m_n^{c}: Y\longrightarrow Q(n)\underset{\Sigma_n}{\otimes} Y^{n}$ satisfying the appropriate compatibility conditions. The maps of $Q$-coalgebras are degree 0 chain complex morphisms $f:Y\longrightarrow Y'$ which preserves the structures of $Y$ and $Y'.$  One denotes the category of $Q$-coalgebras by $\text{coAlg}_Q.$ 
\subsubsection*{Model category on $\text{coAlg}_Q$ }
We use this adjunction 	$U: \text{coAlg}_Q \rightleftarrows Ch_{+}: Q (-)$ between the, forgetful and the cofree functor, to define an injective model structure on $\text{coAlg}_Q$ (see \cite[Thm 4.7]{GJ94}). Namely weak equivalences(resp. cofibrations) of $\text{coAlg}_Q$ are weak equivalences(resp. cofibrations) in the underlined category weak  $Ch_{+}.$ The fibrations are morphisms having the left lifting property with respect to acyclic cofibrations

\subsection{Bar and Cobar constructions}

\subsubsection{Bar construction}

\subsubsection*{$J$-tree}	
Let $J$ be a finite set. A $J-$tree is an abstract planar tree with one output edge on the bottom, and input edges on the top whose sources also called \emph{leaves} are indexed by $J.$ These input edges and the edge from the root are the \emph{ external edges } of the tree, and the other edges are called \emph{internal} edges. The vertices of internal edges are called \emph{internal vertices}. Given an $J-$tree $T,$ we denote by $V(T)$ the set of its internal vertices, and  $E(T)$ the set of edges. 
The set of $J-$trees, denoted by $ \beta(J),$ is equipped with a natural groupoid structure. Formally, an isomorphism of $J-$trees $ \beta: T'\longrightarrow T$ is defined by bijections $\beta_V: V(T')\longrightarrow V(T)$ and $\beta_E: E(T')\longrightarrow E(T)$ preserving the source and target of edges. In other word, $ \beta(J)$ is the groupoid of $J-$labeled trees and non-planar isomorphisms.

\begin{definition}[Free object]\label{Free object}
	Let $M$ be a symmetric chain complex. The \emph{free object}, associated to $M,$  and denote by $F(M)$ consists of: chain complexes  $(F(M)(J), \partial_0),$ for any finite set $J,$  defined as
	\begin{center}
		$F(M)(J)=\underset{T \in \beta(J)}{\oplus}T (M)/\equiv$
	\end{center}
	where $T (M)=\underset{v\in V(T)}{\otimes}M(J_v),$ and the equivalence classes are made of non planar isomorphisms of $J-$trees. The differential $\partial_0$ is induced naturally by the differentials of the chain complexes $(M(J_v), \partial_{J_v}).$
	
	A bijection $ J\longrightarrow J'$ gives an isomorphism $F(M)(J)\longrightarrow F(M)(J')$ by relabeling the leaves of the underlined trees. In this way $F(M)$ becomes a symmetric sequence in chain complexes.
	
\end{definition}

Let $\mathcal{O}$ be an operad, $R$  is a right $\mathcal{O}$-module and $L$ is a left $\mathcal{O}$-module.

\begin{definition} [Two sided bar construction]\label{Two side bar construction}
	The two sided bar construction $B(R, \mathcal{O}, L)$ is the symmetric sequence of chain complexes given by: for any finite set $J,$
	\begin{center}
		$B(R, \mathcal{O}, L)(J):=( R\circ  F(s\widetilde{\mathcal{O}})   \circ L(J), \partial_0+\partial),$ with $\widetilde{\mathcal{O}}= ker\varepsilon.$\index{$B(R, \mathcal{O}, L)$}
	\end{center}
	The differential $\partial_0$ is induced in the natural way by the differentials of the chain complexes $\{ ( R(J'), d_{J'}) \}_{J'\subseteq J},$ $\{ ( \mathcal{O}(J'), d_{J'}) \}_{J'\subseteq J},$ and $\{ ( L(J'), d_{J'}) \}_{J'\subseteq J}.$ The second differential $\partial=\partial_R+\partial_\mathcal{O}+\partial_L$ of this complex is the derivation which integrates the structure morphisms:
	$m_R: R\circ \mathcal{O} \longrightarrow R, m_L:\mathcal{O}\circ L \longrightarrow L,$ and $m_\mathcal{O}:\mathcal{O}\circ \mathcal{O} \longrightarrow \mathcal{O}$ ( for explicit description, see   \cite[$§$ 4.4.3.]{BF04}).
	
\end{definition}

If $L=R= \mathbb{I}, $ then $B(R, \mathcal{O}, L)$ is the usual bar construction $B(\mathcal{O}).$

\subsubsection*{Bar construction with coefficients in $\mathcal{O}$-algebras}	

Given an $\mathcal{O}$-algebra $X,$ there is an associated left $\mathcal{O}$-module $\hat{X}$ defined as follows:
\begin{center}
	$ \hat{X}(\underline{0})=X, \hat{X}(\underline{n})=0$ if $n\geq 1$ 
\end{center}\index{$\widehat{X}=(X, 0, 0, ...)$ is the left $\mathcal{O}$-module associated to $X$}
and the left action $\mathcal{O}\circ \hat{X} \longrightarrow \hat{X}$ is induced in the obvious way by the $\mathcal{O}$-algebra structure of $X.$

This defines an embedding functor $\widehat{-}: \text{Alg}_\mathcal{O} \longrightarrow \mathcal{O}$-mod
of $\mathcal{O}$-algebras to the category of left modules over  $\mathcal{O}.$ We use this embedding  and the bar construction with coefficients in left and right $\mathcal{O}$-modules described in Definition \ref{Two side bar construction} to define the bar construction with coefficients in $\mathcal{O}$-algebras.

\begin{definition}[Bar construction on algebras]	
	Let $X$ be an algebra over a reduced operad $\mathcal{O}.$ We define the bar construction  on $\mathcal{O}$ with coefficient in $X$ as the chain complex: 
	\begin{center}
		$B(\mathcal{O}, X):=\underset{n}{\oplus}(B(\mathbb{I}, \mathcal{O}, \hat{X})(\underline{n}), \partial_0+\partial ),$
	\end{center} 
\end{definition}

\subsubsection{Cobar construction} 
Let $(Q, Q \overset{m^c}{ \longrightarrow }Q \circ Q,  Q \overset{\eta^c}{ \longrightarrow }\mathbb{I})$  be a connected cooperad in $Ch_+,$ and denote $\widetilde{Q}:= ker(\eta^c).$
The cobar construction of $Q,$ denoted $B^c(Q)$ is the dual version of the bar construction (for operads). Namely, this is the  quasi-free operad
\begin{center}
	$B^c(Q)=(F(s^{-1}\widetilde{Q}), \partial_0 + \partial^*),$
\end{center}
where $\partial_0$ is the internal differential of $F(s^{-1}\widetilde{Q})$ induced by that of $Q,$ and $\partial^*$ is the differential defined by reversing all the arrows in the definition of $\partial$ on $B(\mathcal{O})$ in Definition \ref{Two side bar construction} (when $L=R= \mathbb{I}$).

In this definition, the cooperad $Q$ needs to be connected to avoid the case where $B^c(Q)$ has elements in negative degree. We are now ready to state the next theorem (in characteristics 0) which gives a duality between  the bar construction and the cobar construction.

\begin{theorem}\cite[Theorem 2.17]{GJ94}
	The functors $B^c$ and $B$ form an adjoint pair 
	between the categories of connected cooperads and augmented operads.	
\end{theorem}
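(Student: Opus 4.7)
The plan is to establish the adjunction by introducing an intermediate notion (twisting morphisms) that represents both Hom-sets naturally. Given a connected cooperad $Q$ and an augmented operad $\mathcal{O}$, I define the convolution structure on $\underline{hom}(Q,\mathcal{O})$ induced by the cooperad structure on $Q$ and the operad structure on $\mathcal{O}$, and set $\mathrm{Tw}(Q,\mathcal{O})$ to be the set of degree $-1$ maps $\tau:\widetilde{Q}\to\widetilde{\mathcal{O}}$ satisfying the Maurer–Cartan equation $\partial(\tau)+\tau\star\tau=0$, where $\tau\star\tau$ is the convolution composite $Q\xrightarrow{m^c}Q\circ Q\xrightarrow{\tau\circ\tau}\mathcal{O}\circ\mathcal{O}\xrightarrow{m_{\mathcal{O}}}\mathcal{O}$. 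Connectedness of $Q$ is exactly what makes this convolution well defined (the relevant sums are finite on each arity). The goal is then to prove the two natural bijections
\begin{center}
$\mathrm{Hom}_{\mathrm{Op}^{\mathrm{aug}}}(B^c(Q),\mathcal{O})\;\cong\;\mathrm{Tw}(Q,\mathcal{O})\;\cong\;\mathrm{Hom}_{\mathrm{coOp}^{\mathrm{conn}}}(Q,B(\mathcal{O})),$
\end{center}
which, being visibly natural in both variables, produces the desired adjunction.

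For the first bijection, I exploit that $B^c(Q)=(F(s^{-1}\widetilde{Q}),\partial_0+\partial^{*})$ is quasi-free as an operad. A morphism of augmented operads out of $B^c(Q)$ is therefore determined by its restriction to the generators, which is a degree $0$ map $s^{-1}\widetilde{Q}\to\widetilde{\mathcal{O}}$, or equivalently a degree $-1$ map $\tau:\widetilde{Q}\to\widetilde{\mathcal{O}}$. Compatibility with the internal differential $\partial_0$ translates into $\partial(\tau)$ being well defined, while compatibility with the dual differential $\partial^{*}$, which encodes decomposition via $m^{c}$ followed by composition in $\mathcal{O}$, translates precisely into $\tau\star\tau$; the vanishing of the squared total differential becomes the Maurer–Cartan equation. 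This gives a bijection $\mathrm{Hom}_{\mathrm{Op}^{\mathrm{aug}}}(B^c(Q),\mathcal{O})\cong\mathrm{Tw}(Q,\mathcal{O})$.

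The second bijection is dual and uses that $B(\mathcal{O})=(F(s\widetilde{\mathcal{O}}),\partial_0+\partial)$ is quasi-cofree as a connected cooperad. A morphism of connected cooperads $Q\to B(\mathcal{O})$ is determined by its projection onto the cogenerators $s\widetilde{\mathcal{O}}$, again a degree $0$ map $\widetilde{Q}\to s\widetilde{\mathcal{O}}$, equivalently a degree $-1$ twisting candidate $\tau:\widetilde{Q}\to\widetilde{\mathcal{O}}$; compatibility with $\partial$ (which on the bar side is built from $m_{\mathcal{O}}$) reproduces the same Maurer–Cartan equation. The main obstacle is the bookkeeping in this step: one must check that the universal property for quasi-cofree connected cooperads actually holds, i.e.\ that a degree $0$ map into the cogenerators uniquely extends to a cooperad morphism into $B(\mathcal{O})$. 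This requires the connectedness of $Q$ in an essential way, because the extension is built by iterating $m^c$, and the induction terminates on each arity only once $\widetilde{Q}$ sits in strictly positive filtration. Signs arising from the suspensions $s$ and $s^{-1}$ must also be tracked carefully so that the two translations of the MC equation truly agree. Once these points are nailed down, the composition of the two bijections yields the natural isomorphism of Hom-sets, and hence the adjunction $B^{c}\dashv B$.
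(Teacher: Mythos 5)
The paper gives no proof of this statement---it is quoted directly from \cite[Theorem 2.17]{GJ94}---and your proposal is a faithful reconstruction of the standard argument used there: represent both Hom-sets by twisting morphisms (Maurer--Cartan elements for the convolution structure on $\underline{hom}(Q,\mathcal{O})$) and invoke the quasi-free universal property of $B^c(Q)=(F(s^{-1}\widetilde{Q}),\partial_0+\partial^{*})$ on one side and the quasi-cofree universal property of $B(\mathcal{O})=(F(s\widetilde{\mathcal{O}}),\partial_0+\partial)$ within \emph{connected} (conilpotent) cooperads on the other. You have also correctly isolated the two genuinely delicate points---that the cogenerator universal property only holds in the connected setting, where the extension built by iterating $m^c$ terminates arity by arity, and the sign bookkeeping through $s$ and $s^{-1}$ needed so that both translations land on the same Maurer--Cartan equation---so nothing essential is missing.
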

In addition, it is proved in \cite[Theorem 3.2.16]{GK95} that the unit $Q \longrightarrow BB^c(Q)$ and the counit $B^cB(\mathcal{O}) \longrightarrow \mathcal{O}$ of this adjunction are quasi-isomorphisms.

\subsubsection*{Cobar construction with coefficient in $Q$-coalgebras}
Let $Q$ be a connected cooperad on chain complexes and $Y$ a $Q$-coalgebra.
One can follow the same procedure such as in the case of bar construction over algebras to define the cobar construction $B^{c}(\mathcal{O}, Y).$ In this sense we will get literally an object in the category of $B^{c}(Q)$-algebra. We do not follow these steps here
since we will need further as an application a cobar construction functor which send a $B(\mathcal{O})$-coalgebra (for a given reduced operad $\mathcal{O}$) into an $\mathcal{O}$-algebra.

We consider to have from now a reduced operad  $\mathcal{O}$  such that $B^{c}(Q)\overset{\simeq}{\longrightarrow} \mathcal{O}.$
This later morphism induces a degree $0$ morphism  $s^{-1}\widetilde{Q}\longrightarrow \mathcal{O}$ which gives a morphism $\theta: \widetilde{Q}\longrightarrow \mathcal{O}$ of degree $-1.$ We use $\theta$ to define the composition

\begin{center}
	$\xymatrix{
		w: Q(Y) \ar[rr]^-{ Q (m^{c}_Y) }&& Q (Q (Y))\ar[rr]^-{ \theta  (1_{Q Y}) }   & &\mathcal{O} (Q (Y)) 
	}$        
\end{center}
The derivation $d_w: \mathcal{O} (Q (Y)) \longrightarrow \mathcal{O} (Q (Y))$ of degree $-1$ associated to this $w$ satisfies the equation of twisting homomorphism $d(w)+ d_w .w=0$ on $Q(Y).$
This is equivalent to say that $(\mathcal{O} (Q (Y)), d+d_w )$ is a quasi-free $\mathcal{O}$-algebra.  The morphism $\theta$ which is at the base of this construction will be called in the literature \emph{twisting cochain} (see \cite[def 2.16]{GJ94}).
\begin{definition}[cobar construction on a $Q$-coalgebra] Let $Y$ be a $Q$-coalgebra.  
	The cobar construction on $Y,$ associated to the twisting cochain $\theta: \widetilde{Q}\longrightarrow \mathcal{O}$, and denoted $B^{c}_{\theta}(Q, Y)$ is the quasi-free $\mathcal{O}$-algebra 
	\begin{center}
		$B^{c}_{\theta}(Q, Y)= (\mathcal{O} (Q (Y)), d+d_w )$
	\end{center}
	where $d$ is the internal differential of $\mathcal{O} (Q (Y))$ induced by the complexes $\mathcal{O}, Q$ and $Y.$
	
	When $Q=B(\mathcal{O}),$ the map $\theta$ that we consider is given by the projection $B(\mathcal{O})\longrightarrow s\widetilde{\mathcal{O}}.$ In that specific case, we will always drop $\theta$ and simply write $B^{c}(Q, Y)$ to mean

	as $\theta$ is given by the projection $B(\mathcal{O})\longrightarrow s\widetilde{\mathcal{O}}.$
\end{definition}

One form the cobar-bar  adjoint pair  
\begin{center}	
	$B^{c}(B(\mathcal{O}),-):     \text{coAlg}_{B(\mathcal{O})}   \rightleftarrows 
	\text{Alg}_\mathcal{O}:	B(\mathcal{O},-)$
\end{center}

whose the unit and co-unit functors have the following property(in characteristics 0):
\begin{theorem}[\cite{GJ94}, Theorem 2.19]\label{GJ} Given an $\mathcal{O}$-algebra $X$ and a $B(\mathcal{O})$-coalgebra $Y,$  
	the co-unit $B^{c}(B(\mathcal{O}),B(\mathcal{O},X))\longrightarrow X$ and the unit $Y\longrightarrow B(\mathcal{O},B^{c}(B(\mathcal{O}),Y))$ are weak equivalences.
\end{theorem}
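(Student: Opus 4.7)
The plan is to prove both halves in parallel by placing a weight filtration on the underlying quasi-free complexes and comparing with the operadic bar--cobar quasi-isomorphism $B^{c}B(\mathcal{O})\xrightarrow{\simeq}\mathcal{O}$ (and its dual $Q\xrightarrow{\simeq} BB^{c}(Q)$) recalled just above. Concretely, the counit $B^{c}(B(\mathcal{O}),B(\mathcal{O},X))\to X$ has underlying quasi-free $\mathcal{O}$-algebra $\mathcal{O}(B(\mathcal{O})(X))$ equipped with a total differential summing (i) the internal differentials of $\mathcal{O}$, $B(\mathcal{O})$ and $X$, (ii) the bar twisting $\partial$ coming from the action $\mathcal{O}(X)\to X$, and (iii) the cobar twisting $d_{w}$ coming from the projection $B(\mathcal{O})\to s\widetilde{\mathcal{O}}$; the counit itself collapses the composite tree by iterating the $\mathcal{O}$-algebra structure map of $X$.

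First I would reduce to cofibrant $X$ using properness of $\text{Alg}_\mathcal{O}$ and the fact that the quasi-free constructions $B$ and $B^{c}$ preserve weak equivalences between cofibrant objects; the characteristic-zero hypothesis enters here to ensure that $\Sigma_{n}$-coinvariants are homotopy invariant on the relevant projective pieces. A routine deformation-of-differentials argument then reduces further to honestly free algebras $X=\mathcal{O}(V)$. On $B^{c}(B(\mathcal{O}),B(\mathcal{O},\mathcal{O}(V)))$ I would install the increasing filtration $F_{p}$ by the total number of operadic generators appearing in an element. The bar and cobar twistings strictly lower this count while the operadic-composition piece preserves it, so $E^{0}$ has vanishing differential and $E^{1}$ identifies, $\Sigma_{n}$-equivariantly in each arity, with $(B^{c}B(\mathcal{O}))(V)$ equipped with the operadic differential. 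The recalled operadic quasi-isomorphism together with homotopy invariance of coinvariants gives $E^{1}\xrightarrow{\simeq}\mathcal{O}(V)=X$, and the filtration is bounded in each arity because $X$ is free, so the spectral sequence converges and the counit is a weak equivalence.

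For the unit $Y\to B(\mathcal{O},B^{c}(B(\mathcal{O}),Y))$ I would run the dual argument: first reduce to fibrant $Y$ in $\text{coAlg}_{B(\mathcal{O})}$ (which one may take cofree up to retract), then filter $B(\mathcal{O})(B^{c}(B(\mathcal{O}))(Y))$ by arity in the outer $B(\mathcal{O})$. The associated graded identifies with $BB^{c}B(\mathcal{O})$ applied to $Y$, and the cooperadic quasi-isomorphism $B(\mathcal{O})\to BB^{c}B(\mathcal{O})$ combined with characteristic-zero invariance yields the comparison $E^{1}\xrightarrow{\simeq}B(\mathcal{O})(Y)$, whence $Y\to B(\mathcal{O},B^{c}(B(\mathcal{O}),Y))$ is a quasi-isomorphism.

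The genuinely delicate step is the convergence of the spectral sequence for the unit, since the cobar construction involves, a priori, infinite products indexed by unordered partitions in the definition of $\widehat{\circ}$. The key is to exploit that $\mathcal{O}$ is reduced, so $\widetilde{B(\mathcal{O})}$ lives in strictly positive degree; this forces each internal degree of the total complex to receive contributions from only finitely many partitions, making the weight filtration simultaneously exhaustive and Hausdorff. This connectivity, together with the characteristic-zero coinvariance, is exactly what makes the spectral sequence genuinely compute the homology of the target and is where the bulk of the technical bookkeeping must be carried out.
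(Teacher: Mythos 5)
First, a point of reference: the paper does not prove this statement at all --- it is quoted verbatim as Theorem 2.19 of Getzler--Jones \cite{GJ94} (with Remark \ref{Remark cobar-bar duality in any characteristics} deferring the any-characteristic version of the counit statement to Fresse). So your proposal cannot be compared to an internal argument; it has to stand on its own. Its overall architecture --- filter the quasi-free complex, identify the associated graded with an operad-level complex, feed in the recalled bar--cobar quasi-isomorphism together with characteristic-zero exactness of $\Sigma_n$-coinvariants, and check convergence --- is indeed the standard route. But two of the concrete steps do not work as written.

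The central gap is the filtration and the identification of its associated graded. Filtering $\mathcal{O}(B(\mathcal{O})(\mathcal{O}(V)))$ by the \emph{total} number of operadic generators does not make the $E^{0}$-differential vanish: the cobar twisting $d_{w}$ merely relocates one generator from the bar layer to the outer $\mathcal{O}$, so it preserves that count. Moreover, $\mathcal{O}(B(\mathcal{O})(\mathcal{O}(V)))\cong(\mathcal{O}\circ B(\mathcal{O})\circ\mathcal{O})(V)$ is not isomorphic, even as a graded object, to $(B^{c}B(\mathcal{O}))(V)=F(s^{-1}\widetilde{B(\mathcal{O})})(V)$, so no filtration can produce the $E^{1}$-page you claim. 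The statement the associated graded actually requires is the contractibility of the \emph{left twisted composite product} $\mathcal{O}\circ_{\pi}B(\mathcal{O})\simeq\mathbb{I}$ (the graded object $\mathcal{O}\circ B(\mathcal{O})$ with differential twisted by the projection $B(\mathcal{O})\to s\widetilde{\mathcal{O}}$), which is a cousin of, but not the same as, the quasi-isomorphism $B^{c}B(\mathcal{O})\to\mathcal{O}$ you invoke. The filtration that exhibits this is the one by the number of $X$-factors: since $\mathcal{O}$ is reduced, the bar twisting $\partial_{L}$ applies a vertex of arity $\geq 2$ to elements of $X$ and strictly decreases that count, while the internal differentials, the operadic bar differential and $d_{w}$ preserve it; then $E^{0}_{p}\cong(\mathcal{O}\circ_{\pi}B(\mathcal{O}))(p)\otimes_{\Sigma_{p}}X^{\otimes p}$, characteristic zero gives $E^{1}\cong\mathbb{I}(X)=X$, and the filtration is exhaustive and bounded below, so the spectral sequence converges. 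Note that this argument works uniformly for every $X$, which exposes the second problem.

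Your reduction steps are either circular or unjustified. Reducing to cofibrant $X$ presupposes that $B^{c}(B(\mathcal{O}),B(\mathcal{O},-))$ sends arbitrary weak equivalences of algebras to weak equivalences, which is essentially of the same difficulty as the theorem itself (the cobar construction does not preserve all quasi-isomorphisms of coalgebras; one needs exactly the kind of filtration argument above to see that it does on bar constructions). Cofibrant algebras are only retracts of quasi-free ones, not free ones, so the further ``routine deformation of differentials'' down to $X=\mathcal{O}(V)$ is itself an unproved filtration-and-convergence claim. On the coalgebra side, fibrant objects of the injective model structure on $\text{coAlg}_{B(\mathcal{O})}$ are not visibly retracts of cofree coalgebras, and the dual associated-graded identification with ``$BB^{c}B(\mathcal{O})$ applied to $Y$'' has the same size mismatch as in the counit case. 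Finally, the convergence worry about infinite products in $\widehat{\circ}$ is misplaced: the products are indexed by partitions of a finite set, and the paper already observes that these coincide with finite sums in $Ch_{+}$; the genuine convergence input is just exhaustiveness and boundedness below of the weight filtration, aided by the positivity of $\widetilde{B(\mathcal{O})}$ exactly as you say in your last paragraph.
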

With the model structure defined on $ \text{coAlg}_{B(\mathcal{O})}$ and $ \text{Alg}_\mathcal{O},$ we can see that the cobar-bar adjunction is actually a Quillen pair, and Theorem \ref{GJ} completes in proving that this adjunction is a Quillen equivalence.

\begin{remark}\label{Remark cobar-bar duality in any characteristics}
	Fresse proved in \cite[Prop 3.1.12]{BF04} that  the counit $B^cB(\mathcal{O}) \longrightarrow \mathcal{O}$ is a quasi-isomorphism when the ground field $\Bbbk$ is of any characteristics. Thus one can apply this fact in \cite[Thm 4.2.4]{BF09} to deduce that the co-unit $B^{c}(B(\mathcal{O}),B(\mathcal{O},X))\longrightarrow X$ is a cofibrant resolution of $X$ if $\Bbbk$ is a field of any characteristics.
\end{remark}

\subsection{Definition of the functors $\Omega^{\infty}$ and $\Sigma^{\infty}$}

Let $X$ be an $\mathcal{O}$-algebra.  We define the functor $\tau_1\mathcal{O}\underset{\mathcal{O}}{\circ} -: 	\text{Alg}_\mathcal{O} \longrightarrow 	\text{Alg}_\mathcal{O}$ as follows:\begin{center}
	$\tau_1\mathcal{O}\underset{\mathcal{O}}{\circ} X:=colim_{\text{Alg}_\mathcal{O} }(\tau_1\mathcal{O}\circ \mathcal{O}\circ (X)\rightrightarrows \tau_1\mathcal{O}\circ (X)),$
\end{center}
where $\tau_1\mathcal{O}$ is the operad: $\tau_1\mathcal{O}(1)=\Bbbk,$ and $\tau_1\mathcal{O}(t)=0$ if $t\neq 1;$ The first map of this colimit is produced by the multiplication $\tau_1\mathcal{O}\circ \mathcal{O} \longrightarrow \tau_1\mathcal{O}$ and the second map is given by the algebra structure map $\mathcal{O} (X)\longrightarrow X.$ Strictly speaking, the algebra $\tau_1\mathcal{O}\underset{\mathcal{O}}{\circ} X$ has a trivial $\mathcal{O}$-algebra structure, thus we will define the abelianization functor as its composite with the forgetful functor
\begin{center}
	$(-)^{ab}: 	\text{Alg}_\mathcal{O} \overset{ \tau_1\mathcal{O}\underset{\mathcal{O}}{\circ}-}{\longrightarrow} \text{Alg}_\mathcal{O} \overset{U}{\longrightarrow }Ch_{+}\overset{I}{\hookrightarrow} Ch$
\end{center}
where $I$ is the inclusion fucntor defined by $I(V)_t:=\left\{  \begin{array}{ll}
V_t & \mbox{if  }  t\geq 0\\
0 & \mbox{if }  t<0
\end{array} \right.$ 

The abelianization functor has a right Quillen adjoint functor:
\begin{center}
	$\Omega^{\infty}: Ch\overset{red_0}{\longrightarrow} Ch_{+} \overset{(-)_{triv}}{\longrightarrow } \text{Alg}_\mathcal{O}$\index{$\Omega^{\infty}$}
\end{center}
where for any chain complex $C_*,$ $red_0(C_*)_t:=\left\{  \begin{array}{ll}
C_t & \mbox{if  }  t>0\\
ker(d_0) & \mbox{if }  t=0
\end{array} \right.$ 

and $(-)_{triv}$ is the functor which assigns to any non negative chain complex the trivial $\mathcal{O}$-algebra structure. 

The functor $(-)^{ab}$ does not preserves quasi-isomorphisms in general, apart from preserving quasi-isomorphisms between quasi-free algebras (since they are cofibrant objects in $\text{Alg}_\mathcal{O}$). Its  associated derive functor is called in the literature Quillen homology.

\begin{definition}[Quillen homology]
	If $X$ is an $\mathcal{O}$-algebra, then the \emph{Quillen homology} $TQ(X)$ of $X$ is the $\mathcal{O}$-algebra $\tau_1\mathcal{O}\overset{h}{\underset{\mathcal{O}}{\circ}} X.$
\end{definition}			
Again since the algebra structure on $TQ(X)$ is trivial, we will abuse notation and consider it as an object in $Ch_+.$ 
We will give in the next lines an explicit model of the functor $TQ(-)$\index{$TQ(-)$} which we  will need to define $\Sigma^{\infty}.$

Let $X$ be an algebra over $\mathcal{O}.$ One associate to $X$ the symmetric sequence $\hat{X}$ defined as $\hat{X}(k)=X,$ if $k=0$ and $\hat{X}(k)=0,$ if $k\neq 0. $ One can see that $\hat{X}$ is a left $\mathcal{O}$-module with the structure map induced naturally by the algebra structure map. In this sense the category $\text{Alg}_\mathcal{O}$ embeds in $\text{Lt}_\mathcal{O}$( the category of left $\mathcal{O}$-modules) as a full subcategory of left modules concentrated at degree $0,$ via the functor $\hat{-}: \text{Alg}_\mathcal{O}\longrightarrow \text{Lt}_\mathcal{O}, X\longmapsto \hat{X}.$

According to Theorem \ref{GJ}, $B^c(B(\mathcal{O}), B(\mathcal{O}, X))$ is a cofibrant replacement of $X,$ therefore $TQ(X)\simeq UB(\mathcal{O}, X),$ where  $U: \text{Coalg}_{B(\mathcal{O})}\longrightarrow Ch_+$ is the forgetful functor.
Under this last quasi-isomorphism, we will consider the functor $UB(\mathcal{O}, -)$ as our explicit model for the functor $TQ(-)$ and we will denote by $\Sigma^{\infty}$ \index{$\Sigma^{\infty}$} the composition: 
\begin{center}
	$	
	\xymatrix{\text{Alg}_{\mathcal{O}} \ar@/_1pc/[rrrrrr]|{\Sigma^{\infty} }   \ar[rr]^{B(\mathcal{O}, -)}  & & \text{Coalg}_{B(\mathcal{O})} \ar[rr]^-{U}&  &Ch_{+} \ar@{^{(}->}[rr]^{I}& & Ch}
	$
\end{center}	
Before concluding these constructions, we make the following remark:

We consider the following two adjunctions
\begin{center}
	$
	\xymatrix{
		\text{coAlg}_{B(\mathcal{O})} \ar@<1ex>[rr]^-{U}  & &Ch_+  \ar@<1ex>[rr]^-{I}  \ar@<.5ex>[ll]^-{  B(\mathcal{O}, (-)_{triv})}& &\ar@<.5ex>[ll]^-{  red_0} Ch}, $
\end{center}
where the top functors are each left adjoint  and the bottom functors are each right adjoint. We then observe that the associate comonad is $I U B(\mathcal{O}, (-)_{triv})red_0 \cong \Sigma^{\infty}\Omega^{\infty}.$ Therefore even if 
the functor $ \Omega^{\infty}$ is not adjoint to $\Sigma^{\infty},$ we can say that  $T=\Sigma^{\infty}\Omega^{\infty}: Ch \longrightarrow Ch$\index{$\Sigma^{\infty}\Omega^{\infty}$} is a comonad and  is a "homotopy good model" of the comonad $(-)^{ab}(-)_{triv}.$

We extend construction of the functors $\Sigma^{\infty}$ and $\Omega^{\infty}$ to other categories as follows:
	\begin{enumerate}
		\item[-] $\Sigma^{\infty}:=I: Ch_+ \longrightarrow Ch;$
		\item[-] $\Sigma^{\infty}=Id: Ch \longrightarrow Ch;$ 
		\item[-] $\Omega^{\infty}=red_0 -: Ch \longrightarrow Ch_+; $
		\item[-] $\Omega^{\infty}=Id: Ch \longrightarrow Ch.$
	\end{enumerate}

\section{Homotopy limits and colimits in $\text{Alg}_\mathcal{O}$ }\label{Section Homotopy limits and homotopy colimits}
 The purpose of this section is to remind a brief notion of homotopy limits and colimits, and give their explicit description in  $\text{Alg}_\mathcal{O}$  in terms of holims and hocolims in chain complexes.

Let $\mathcal{C}$ and $\mathcal{D}$ be any of the categories $\text{Alg}_\mathcal{O},\text{coAlg}_{B(\mathcal{O})}$ and  $Ch_{+}.$ 
These categories are complete and cocomplete. The authors of \cite{DHKS} proved, in a general argument for complete and cocomplete model categories,  that holims and hocolims always exists in $\mathcal{C}$(see \cite[19.2]{DHKS}). More explicitly, given a small category $J,$ and an $J$-diagram $D$ in $\mathcal{C},$ they replace $D$ through a functor $D\longmapsto D_{vf}$ (resp. $D\longmapsto D_{vc}$) which associate a so called "virtually fibrant replacement" (resp "virtually cofibrant replacement") $D_{vf}$ (resp. $D_{vc}$) such that there is a map $D\overset{\simeq}{\longrightarrow} D_{vf}$ (resp. $D_{vc}\overset{\simeq}{\longrightarrow} D$) natural in $D.$ 

According to this vocabulary  we can now set the definition of holims and hocolims:
\begin{definition} Given an $J$-diagram $D$ in $\mathcal{C},$\begin{center}
		$holim_{\mathcal{C}}(D):=lim_{\mathcal{C}}(D)_{vf}$ and 	$hocolim_{\mathcal{C}}(D):=colim_{\mathcal{C}}(D)_{vc}.$
	\end{center}
\end{definition}

\subsection{Homotopy pullback in $\text{Alg}_\mathcal{O}$ }
We assume in this section that the ground field $\Bbbk$ is of characteristics 0.
The homotopy limit in $\text{Alg}_\mathcal{O}$ is calculated using observations in the underlined category $Ch_{+}.$ Given a diagram $D: X \overset{g}{\longrightarrow} Z \overset{f}{\longleftarrow} Y $ in $Ch_{+},$ if either $f$ or $g$ is a surjection (to mean fibration), then $holim_{ Ch_{+}}(D)\simeq lim_{Ch_{+}}(D).$  This comes out easily when we apply the homology long exact sequence theorem to the two parallel fibrations of the pullback square associated to $D.$  Our methodology to define an explicit homotopy limit in $\text{Alg}_\mathcal{O}$ is to replace the maps of the $\mathcal{O}$-algebra diagram by explicit surjections.
\subsection*{Construction of path objects in $\text{Alg}_\mathcal{O}$ }
Let $\mathcal{I}=(\wedge(t,dt),d)$ be the free differential graded commutative algebra generated by the element $t$ in degree $0$ and $dt$ in degree $-1, $ with differential $d$ given by $d(t)=dt$ and $d(dt)=0.$ It is useful to notice that an element $\alpha$ of $\mathcal{I}$ has the form $\alpha=P(t)+Q(t)dt$ with $P,Q\in \Bbbk [t].$

There are natural commutative algebra maps $s_0: \Bbbk\longrightarrow \mathcal{I}$ and $p_0, p_1: \mathcal{I}\longrightarrow \Bbbk$ defined as: $\forall (\alpha=P(t)+Q(t)dt\in \mathcal{I})$ and $k\in \Bbbk,$
\begin{center}
	$p_0(\alpha):=P(0) ,$   $ p_1(\alpha):= P(1)$ and $s_0(k)=k$
\end{center}
$s_0$ is a quasi-isomorphism and $p_0s_0=p_1s_0=1_\Bbbk.$

For any $\mathcal{O}$-algebra $X,$ there is a natural $\mathcal{O}$-algebra structure on $\mathcal{I}\otimes X$ (see \cite[$§$2.4]{Livernet}) given by :	If $a\in \mathcal{O}(n), \alpha_i \otimes x_i \in \mathcal{I}\otimes X,$ for $1\leq i\leq n,$
\begin{center}
	$m(a\otimes \alpha_1 \otimes x_1 \otimes ...\otimes \alpha_n \otimes x_n):=\pm  \alpha_1...\alpha_n \otimes m_X(a\otimes x_1\otimes ...\otimes x_n\otimes);$
\end{center}
One then get the factorization in $\mathcal{O}$-algebras (unbounded algebras)
\begin{center}
	$\xymatrix{
		X \ar[r]^-{ s_0\otimes X }&\mathcal{I}\otimes X\ar@<-.5ex>[r]_-{  p_0 \otimes X} \ar@<1ex>[r]^-{p_1 \otimes X}  & X 
	}$        
\end{center}
which yield to the diagram in $\text{Alg}_\mathcal{O}:$

\begin{center}
	$\xymatrix{
		X \ar[r]^-{s_{0}^{X} }&red_0(\mathcal{I} \otimes X)\ar@<-.5ex>[r]_-{  p_{0}^{X}} \ar@<1ex>[r]^-{p_{1}^{X}}  & X 
	}$        
\end{center}
One can prove that $p_{0}^{X}$ and $p_{1}^{X}$ are trivial surjections.
\begin{definition}[path object]
	A path object associated to an $\mathcal{O}$-algebra $X$ is the $\mathcal{O}$-algebra $X^{\mathcal{I}}:=red_0(\mathcal{I} \otimes X)$
	together with the $\mathcal{O}$-algebra morphisms $ p_{0}^{X}, p_{1}^{X}$ and $s_{0}^{X}.$
\end{definition}

\subsection*{Construction of homotopy pullbacks in $\text{Alg}_\mathcal{O}$}
Let us consider the commutative diagram in $\text{Alg}_\mathcal{O}:$
\begin{center}
	\[
	\xymatrix{
		Y \ar@/_/[dr]_f \ar@/^/[drrr]^{1_Y} \ar@{.>}[drr]|-{( s_{0}^{Z}f, Y)}\\
		&Z\ar@/_/[dr]_{s^Z_0} &Z^{\mathcal{I}}\underset{Z}{\times} Y \ar[d]^{\pi_1} \ar[r]_-{\pi_2}^{\simeq} & Y\ar[d]^f \\
		& &Z^{\mathcal{I}} \ar[r]^{p^Z_0}_{\simeq} &Z}
	\]
\end{center}
where the square in the middle is a pullback. From the left triangle, we build the following factorization of $f:$
\begin{center}
	$\xymatrix{
		Y \ar[r]^-{( s_{0}^{Z}f, Y) }_-{\simeq}\ar[r]\ar[rd]_{f=p^Z_1s^Z_0f} & Z^{\mathcal{I}}\underset{Z}{\times} Y\ar[d]^{p_{1}^{Z}\pi_1} \\
		& Z 
	}$        
\end{center} 
We use this factorization to replace $f$ in a diagram $D: X \overset{g}{\longrightarrow} Z \overset{f}{\longleftarrow} Y$ by the fibration $p_1^Z \pi_1.$  

\begin{proposition}\label{holims}
	Given an $\mathcal{O}-$algebra diagram $D: X \overset{g}{\longrightarrow} Z \overset{f}{\longleftarrow} Y, $ a homotopy pullback of $D$ is the  $\mathcal{O}-$algebra $P_{D}= X\underset{Z}{\times} Z^{\mathcal{I}}\underset{Z}{\times} Y,$ namely  $P_{D}=lim_{\text{Alg}_\mathcal{O}}(X \overset{g}{\longrightarrow} Z \overset{p_{1}^{Z}\pi_1}{\longleftarrow} Z^{\mathcal{I}}\underset{Z}{\times} Y ).$	
\end{proposition}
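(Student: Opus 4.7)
The plan is to run the classical argument for right proper model categories: replace one leg of the cospan $X \overset{g}{\longrightarrow} Z \overset{f}{\longleftarrow} Y$ by a weakly equivalent fibration, and form the strict pullback of the replaced diagram. The factorization already exhibited in the excerpt, $Y \overset{(s_0^Z f,\,1_Y)}{\longrightarrow} Z^{\mathcal{I}}\underset{Z}{\times} Y \overset{p_1^Z \pi_1}{\longrightarrow} Z$, is the intended replacement of $f$, so the proof reduces to checking (i) the first map is a weak equivalence, (ii) the second is a fibration, and (iii) the resulting strict pullback models the homotopy pullback.

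For (i), I would observe that $\pi_2\colon Z^{\mathcal{I}}\underset{Z}{\times} Y \to Y$ is the pullback of the acyclic fibration $p_0^Z$ along $f$, hence itself an acyclic fibration (acyclic fibrations are stable under pullback in any model category). Since $\pi_2\circ(s_0^Z f,\,1_Y)=1_Y$, the 2-out-of-3 axiom forces $(s_0^Z f,\,1_Y)$ to be a weak equivalence. For (ii), I recall that fibrations in $\text{Alg}_\mathcal{O}$ are degreewise surjections on underlying chain complexes, so I only need to exhibit a preimage of an arbitrary $z \in Z_n$. The element $\xi = t\otimes z \in \mathcal{I}\otimes Z$ has degree $n \geq 0$, so it survives the $\mathrm{red}_0$ truncation and lies in $Z^{\mathcal{I}}$; moreover $p_0^Z(\xi) = 0 = f(0)$ and $p_1^Z(\xi) = z$, so $(\xi,\,0) \in Z^{\mathcal{I}}\underset{Z}{\times} Y$ is a preimage of $z$. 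This uses that $0 \in Y$ is a bona fide element, which is clean because $\mathcal{O}$ is reduced and hence the trivial algebra is initial.

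For (iii), I would invoke the right properness of $\text{Alg}_\mathcal{O}$, inherited from that of $Ch_+$ since fibrations and weak equivalences are detected on the underlying chain complex. In a right proper model category, the pullback of a fibration along any map computes the homotopy pullback, so $P_D = X\underset{Z}{\times}(Z^{\mathcal{I}}\underset{Z}{\times} Y)$ is the homotopy pullback of $X \overset{g}{\longrightarrow} Z \overset{p_1^Z\pi_1}{\longleftarrow} Z^{\mathcal{I}}\underset{Z}{\times} Y$. The map of cospans $(1_X,\,1_Z,\,(s_0^Z f,\,1_Y))$ from $D$ to this replaced cospan is a weak equivalence (commutativity over $Z$ follows from $p_1^Z s_0^Z = 1_Z$), so invariance of homotopy pullbacks under such equivalences yields that $P_D$ is a homotopy pullback of $D$ itself.

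The main obstacle is step (ii): the remaining steps are formal consequences of general model-category theory together with the assertions, already stated in the excerpt, that $p_0^Z$ and $p_1^Z$ are acyclic fibrations. Only (ii) genuinely uses the explicit algebraic form of the interval $\mathcal{I}=\wedge(t,dt)$ and the reducedness of $\mathcal{O}$; it is where the careful choice of path object for $\text{Alg}_\mathcal{O}$ earns its keep.
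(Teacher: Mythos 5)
Your overall strategy is genuinely different from the paper's. You run the standard right-properness argument: factor $f$ as the weak equivalence $(s_0^Zf,1_Y)$ followed by the fibration $p_1^Z\pi_1$, invoke the fact that in a right proper model category the strict pullback along a fibration computes the homotopy pullback, and conclude by invariance under weak equivalence of cospans. The paper instead works directly with the definition $\text{holim}(D):=\lim(D_{vf})$ of \cite{DHKS}: it first shows that the functor $D\mapsto P_D$ preserves weak equivalences of cospans (by transporting everything to $Ch_+$ along $U$ and using the long exact sequence), and then compares $\lim(D_{vf})$ with $P_{D_{vf}}$ by a cube whose four horizontal homotopy fibers are shown to be equivalent. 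Your route is shorter and more conceptual; the paper's is self-contained relative to its chosen (virtually-fibrant-replacement) definition of holim and does not presuppose the identification of that holim with the derived limit of a fibrant replacement of one leg, and it establishes in passing that $P_-$ is homotopy invariant, which is reused later. Steps (i) and (iii) of your argument are fine, granted the paper's assertion that $p_0^Z$ is an acyclic fibration and that $\text{Alg}_\mathcal{O}$ is right proper (all objects are fibrant since fibrations and weak equivalences are created in $Ch_+$).

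There is, however, a concrete slip in your step (ii), in degree $0$. For $z\in Z_0$ one has $d(t\otimes z)=dt\otimes z+t\otimes dz=dt\otimes z\neq 0$, so $t\otimes z$ does \emph{not} lie in $\ker(d_0)$ and is therefore killed by $\mathrm{red}_0$: your proposed preimage is not an element of $Z^{\mathcal{I}}$ in degree $0$. This is not repairable by a cleverer choice: any degree-$0$ cycle $\xi=A(t)+B(t)\,dt$ of $\mathcal{I}\otimes Z$ satisfies $A'(t)=\pm d_ZB(t)$, hence $p_1^Z(\xi)-p_0^Z(\xi)=\pm\, d_Z\!\left(\int_0^1B\right)$ is a boundary, so the image of $p_1^Z\pi_1$ in degree $0$ is $f(Y_0)+d(Z_1)$; taking $Y=0$ and $Z=\Bbbk$ concentrated in degree $0$ gives a map that is not surjective there. (The paper's own proof asserts the same surjectivity, so the issue is inherited from the setup.) Your argument does prove surjectivity in all positive degrees, so everything goes through verbatim if one uses the standard projective model structure on $Ch_+$ in which fibrations are the maps surjective in positive degrees --- which is the convention under which the quoted cofibration/fibration description of $Ch_+$ is actually a model structure --- but you should either say this explicitly or otherwise address degree $0$; as written, the claim that $t\otimes z$ ``survives the $\mathrm{red}_0$ truncation'' for $n\geq 0$ is false at $n=0$.
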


\begin{proof} 
	\begin{enumerate}
		\item The morphism $p_{1}^{Z}\pi_1$ is a surjection,  it then follows from our comment in the introduction of this section that in $Ch_{+},$ we have 
		\begin{align*}
		UP_D\simeq& \text{holim }_{Ch} (UX \overset{g}{\longrightarrow} UZ \overset{p_{1}^{Z}\pi_1}{\longleftarrow} UZ^{\mathcal{I}}\underset{Z}{\times} Y )\\
		\simeq & \text{holim }_{Ch}UD
		\end{align*}
		As a consequence we deduce that the functor $P_{-}$ preserves weak equivalence of diagrams in $\text{Alg}_\mathcal{O}$. Therefore we retain that 
		$P_D\overset{\simeq}{\longrightarrow} P_{D_{vf}}.$
		\item We now prove that $lim_{\text{Alg}_\mathcal{O} }(D_{vf}) \overset{\simeq}{\longrightarrow} P_{D_{vf}}.$ Lets consider  $D_{vf}: X' \overset{g'}{\longrightarrow} Z' \overset{f'}{\longleftarrow} Y'$ and the cube in  $Ch_{+}:$ 
		
		\begin{center}
			\begin{tikzcd}[row sep=scriptsize, column sep=scriptsize]
				&U P_{D_{vf}} \arrow[dl,  two heads, "h_1"] \arrow[rr]  & & UZ'^{I} \underset{Z'}{\times} Y' \arrow[dl,  two heads, "h_2"]  \\ UX' \arrow[rr, crossing over]  & & UZ' \\
				& Ulim_{\text{Alg}_\mathcal{O} }(D_{vf})  \arrow[dl, "h_4"] \arrow[uu,] \arrow[rr] & & UY' \arrow[uu, " (s_{0}^{Z'}f; Y') "] \arrow[dl, "h_3"] \\
				UX' \arrow[uu, "="] \arrow[rr] & & UZ' \arrow[uu, "="] \\
			\end{tikzcd}
		\end{center}
		where we have applied the forgetful functor $U: \text{Alg}_\mathcal{O} \longrightarrow Ch_{+}$ to the original natural cube in $\text{Alg}_\mathcal{O}.$
		The square obtained from the homotopy fibers of the morphisms $h_1, h_2, h_3$ and $h_4$ has the following characteristics:
		\begin{center}
			$\xymatrix{
				hofibre(h_1) \ar[r]^-{(1) }_-{\simeq}\ar[r]&	hofibre(h_2) \\
				hofibre(h_4)\ar[u] \ar[r]_{(3)}^{\simeq}	& 	hofibre(h_3)  \ar[u]^{(2)}_{\simeq}
			}$      
		\end{center}	
		where 
		\begin{enumerate}
			\item [(1)]  is a weak equivalence as the top square of the cube is a homotopy pullback;
			\item[(2)] is a weak equivalence because $1_{UZ'}$ and $U(s_{0}^{Z}f, Y)$ are weak equivalences, and these  imply that the right hand square is a homotopy pullback;
			\item[(3)] is a weak equivalence since the bottom square is a homotopy  pullback.
		\end{enumerate}
		We can conclude that $hofibre(h_4)\longrightarrow hofibre(h_1)$ is a weak equivalence and therefore that $lim_{\text{Alg}_\mathcal{O} }(D_{vf}) \overset{\simeq}{\longrightarrow} P_{D_{vf}}.$
		We then conclude in conclusion that 
		\begin{center}
			$P_D\overset{\simeq}{\longrightarrow} P_{D_{vf}} \overset{\simeq}{\longleftarrow} lim_{\text{Alg}_\mathcal{O} }(D_{vf})$
		\end{center}
	\end{enumerate}
\end{proof}
In general this construction is extended in the obvious manner in order to define higher dimensional limits in $\text{Alg}_\mathcal{O}.$

\begin{mylemma}\label{Trivial_algebra_on_Omega}
	If $X$ is an $\mathcal{O}$-algebra, then the map 
	\begin{align*}
	\Phi: (red_0s^{-1}X)_{triv}&\longrightarrow \Omega X\\
	s^{-1}x&\longmapsto (0,dt \otimes x, 0)
	\end{align*}
	is a weak equivalence in $\text{Alg}_\mathcal{O}.$
\end{mylemma}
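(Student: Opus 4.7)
The plan is to reduce the statement to a Poincar\'e-lemma calculation in $Ch$ and then transport it back to $\text{Alg}_\mathcal{O}$ using the explicit model of the homotopy pullback given in Proposition \ref{holims}. First I would unpack $\Omega X$: applying that proposition to the diagram $0 \to X \leftarrow 0$ identifies $\Omega X$ with $0 \times_X X^{\mathcal{I}} \times_X 0$, and in the underlying category $Ch_+$ this is the intersection $\ker(p_0^X) \cap \ker(p_1^X) \subset X^{\mathcal{I}}$. Introducing the auxiliary chain complex $\mathcal{A} := \ker\bigl((p_0, p_1) : \mathcal{I} \to \Bbbk \oplus \Bbbk\bigr)$, flatness over the field $\Bbbk$ together with the compatibility of $red_0$ with passage to subcomplexes give the identification $\Omega X \cong red_0(\mathcal{A} \otimes X)$ as chain complexes. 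The bookkeeping at this step---checking that the degree-$0$ kernel imposed by $red_0$ is compatible with the kernel of $(p_0^X, p_1^X)$---is what I expect to be the main obstacle of the proof.

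The heart of the argument is then a quasi-isomorphism $s^{-1}\Bbbk \to \mathcal{A}$ sending $s^{-1}(1) \mapsto dt$. Explicitly $\mathcal{A}_0 = t(1-t)\Bbbk[t]$ and $\mathcal{A}_{-1} = \Bbbk[t]\,dt$ with differential $P(t) \mapsto P'(t)\,dt$, and one checks directly (or via the long exact sequence of $0 \to \mathcal{A} \to \mathcal{I} \to \Bbbk \oplus \Bbbk \to 0$ together with $\mathcal{I} \simeq \Bbbk$) that the only nonzero homology class of $\mathcal{A}$ sits in degree $-1$ and is represented by $dt$. The characteristic-$0$ hypothesis enters precisely at this point, since the vanishing of the remaining homology requires every polynomial in $\Bbbk[t]$ to admit a polynomial antiderivative. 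Tensoring this quasi-isomorphism with $X$ preserves it (we are over a field), and so does the subsequent application of $red_0$, because $H_n(red_0 C) = H_n(C)$ for all $n \geq 0$; the resulting chain-level map $red_0(s^{-1}X) \to \Omega X$ sending $s^{-1} x \mapsto dt \otimes x$ is exactly $\Phi$, which is therefore a quasi-isomorphism.

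Finally I would upgrade $\Phi$ from a chain-level quasi-isomorphism to a weak equivalence in $\text{Alg}_\mathcal{O}$ by checking it is a morphism of algebras. The source carries the trivial structure, so it suffices to see that every structure map of arity $n \geq 2$ vanishes on the image of $\Phi$. Using the formula for the $\mathcal{O}$-algebra structure on $\mathcal{I} \otimes X$ recalled earlier, an operation of arity $n \geq 2$ applied to $dt \otimes x_1, \ldots, dt \otimes x_n$ produces a factor $(dt)^n = 0$ in $\mathcal{I}$ and therefore vanishes. Combined with the chain-map check for $n = 1$, this promotes $\Phi$ to an $\mathcal{O}$-algebra morphism, and hence to a weak equivalence in $\text{Alg}_\mathcal{O}$ as claimed.
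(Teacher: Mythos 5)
Your proof is correct, but it reorganizes the homological part of the argument in a genuinely different way from the paper. The paper's proof performs the same two steps at the start and end as yours (the $\mathcal{O}$-algebra-map check via $dt^{n}=0$, identical to your final paragraph), but for the quasi-isomorphism itself it works entirely by hand inside $\Omega X$: it writes a general cycle as $\overline{x}=a_0+\sum_l t^{l}a_l+\sum_k t^{k}dt\,b_k$, solves the cycle condition to express the $a_l$ as $\frac{1}{l}db_{l-1}$, and exhibits explicit antiderivatives to prove surjectivity and injectivity of $H_*(\Phi)$ directly. You instead factor the computation through the auxiliary complex $\mathcal{A}=\ker\bigl((p_0,p_1):\mathcal{I}\to\Bbbk\oplus\Bbbk\bigr)$, identify $\Omega X\cong red_0(\mathcal{A}\otimes X)$, compute $H_*(\mathcal{A})\cong s^{-1}\Bbbk$ once and for all (a one-variable Poincar\'e lemma), and then invoke flatness over the field together with the fact that $red_0$ preserves homology in non-negative degrees. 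The key input is the same in both arguments --- in characteristic $0$ every polynomial has a polynomial antiderivative, which is exactly where the paper divides by $l$ --- but your version isolates that input in a single computation independent of $X$ and replaces the element chase by a K\"unneth argument, which is cleaner and makes the role of the hypothesis on $\Bbbk$ more transparent; the paper's version is more elementary and self-contained, requiring no identification of $\Omega X$ as a tensor product. The bookkeeping you flag as the main obstacle (compatibility of $red_0$ with the kernel of $(p_0^X,p_1^X)$ in degree $0$) does go through, since both sides reduce to $\ker(d)\cap(\mathcal{A}\otimes X)_0$ there, so there is no gap.
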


\begin{proof} We first prove that $\Phi$ is a map of $\mathcal{O}$-algebras. Namely
	let $x_1, ..., x_n \in X,$ and $a\in \mathcal{O}(n), (n\geq 2),$ then 
	\begin{align*}
	m_{\Omega X}(a\otimes \Phi(s^{-1}x_1)\otimes ...\otimes\Phi(s^{-1}x_n))&=(0,dt^{n}\otimes m_X(a\otimes x_1\otimes ...\otimes x_n), 0)\\
	&=0 \text{  ( since $dt^{n}=0$) }
	\end{align*}
	This computation proves that $\Phi$ is a map of $\mathcal{O}$-algebras as the $\mathcal{O}$-algebra structure on $ (red_0s^{-1}X)_{triv}$ is trivial.
	It is obvious that the map $\Phi$ commutes with differentials of the two complexes.
	
	Now we prove by hand that $H_*(\Phi)$ is injective and surjective. Let us take
	\begin{center}
		$\overline{x}=a_0+\underset{l\geq 1}{\Sigma} t^{l}a_l +\underset{k\geq 0}{\Sigma} t^{k}dt b_k\in X^{\mathcal{I}} $ such that $(0, \overline{x}, 0) \in \Omega X \cap Ker d$ 
	\end{center} where for each $l$ and $k, $   $a_l, b_k \in X;$ 
	\begin{align*}
	(0, \overline{x}, 0)\in \Omega X& \Longleftrightarrow p_1 ^{X}(\overline{x})=0=p_0 ^{X}(\overline{x})\\
	&\Longleftrightarrow a_0=0=\underset{l\geq 1}{\Sigma}a_l
	\end{align*}
	One can also see that 
	\begin{align*}
	d\overline{x}=0 \Longleftrightarrow & \forall l\geq 1,  da_l=0  \text{ and  }
	\underset{l\geq 1}{\Sigma} lt^{l-1}dta_l=\underset{k\geq 0}{\Sigma} t^{k}dt db_k
	\end{align*}
	This last equality implies that $\forall l\geq 1, a_l=\frac{1}{l}db_{l-1}$ and thus $\underset{l\geq 1}{\Sigma}\frac{1}{l}db_{l-1}=0$ 
	One then get:
	\begin{align*}
	\overline{x}&= \underset{l\geq 1}{\Sigma}\frac{1}{l}t^{l}db_{l-1}+\underset{l\geq 1}{\Sigma}t^{l-1}dtb_{l-1}\\
	&=\underset{l\geq 1}{\Sigma}\frac{1}{l}(t^{l}db_{l-1}+ lt^{l-1}dtb_{l-1})\\
	&=d(\underset{l\geq 1}{\Sigma}\frac{1}{l}t^{l}b_{l-1})\\
	&= d(\underset{l\geq 1}{\Sigma}\frac{1}{l}t^{l}b_{l-1}-t\underset{l\geq 1}{\Sigma}\frac{1}{l}b_{l-1}+t\underset{l\geq 1}{\Sigma}\frac{1}{l}b_{l-1})\\
	& =d(\underset{l\geq 1}{\Sigma}\frac{1}{l}t^{l}b_{l-1}-t\underset{l\geq 1}{\Sigma}\frac{1}{l}b_{l-1})+d(t\underset{l\geq 1}{\Sigma}\frac{1}{l}b_{l-1})
	\end{align*}
	One can see that $\underset{l\geq 1}{\Sigma}\frac{1}{l}t^{l}b_{l-1}-t\underset{l\geq 1}{\Sigma}\frac{1}{l}b_{l-1}\in \Omega X$ and that $d(t\underset{l\geq 1}{\Sigma}\frac{1}{l}b_{l-1})=dt\otimes \underset{l\geq 1}{\Sigma}\frac{1}{l}b_{l-1}, $
	therefore \begin{center}
		$[\overline{x} ]= [dt\otimes \underset{l\geq 1}{\Sigma}\frac{1}{l}b_{l-1}]=H_*(\Phi)([s^{-1}\underset{l\geq 1}{\Sigma}\frac{1}{l}b_{l-1} ])$
	\end{center}
	This implies that $H_*(\Phi)$ is surjective. 
	
	To prove that $H_*(\Phi)$ is injective, let's take $[s^{-1}x] \in (red_0s^{-1}X)_{triv}$ such that $H_*(\Phi)([s^{-1}x] )=0.$ This implies that $dtx=d\overline{x},$ for a given  $\overline{x}\in \Omega X.$ As before we set $\overline{x}=\underset{l\geq 1}{\Sigma} t^{l}a_l +\underset{k\geq 0}{\Sigma} t^{k}dt b_k, $ with $\underset{l\geq 1}{\Sigma}a_l=0.$
	An easy comparison on the degree of the polynomials proves that 
	\begin{align*}
	dtx=\underset{l\geq 1}{\Sigma} lt^{l-1}dta_l+ \underset{l\geq 1}{\Sigma} t^{l}dtda_l -\underset{k\geq 0}{\Sigma} t^{k}dt db_k&\Longleftrightarrow x=a_1-db_0  \text{ and } \forall l\geq 2, a_l=\frac{1}{l}db_{l-1}\\
	&\Longrightarrow x=-\underset{l\geq 2}{\Sigma}\frac{1}{l}db_{l-1}-db_0=d(-\underset{l\geq 1}{\Sigma}\frac{1}{l}b_{l-1})
	\end{align*}
	this means that $[s^{-1}x]=0$ and  proves that $H_*(\Phi)$ is injective.
	
\end{proof}
\begin{mylemma}\label{Loop_algebra_is_infinite_loop}
	If $Y$ is an $\mathcal{O}$-algebra such that $Y\simeq \Omega X$ then $Y\simeq \Omega^{\infty} UY.$
\end{mylemma}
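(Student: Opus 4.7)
The plan is to leverage Lemma \ref{Trivial_algebra_on_Omega} to reduce the statement to showing that a trivial $\mathcal{O}$-algebra is recovered (up to weak equivalence) by the composite $\Omega^{\infty}\circ U$. Combining the hypothesis $Y\simeq \Omega X$ with the weak equivalence $\Phi:(red_0 s^{-1}X)_{triv}\overset{\simeq}{\longrightarrow}\Omega X$ supplied by the preceding lemma gives a zigzag of weak equivalences in $\text{Alg}_\mathcal{O}$ exhibiting $Y$ as equivalent to the trivial $\mathcal{O}$-algebra $W:=(red_0 s^{-1}X)_{triv}$. So the real content is that any $\mathcal{O}$-algebra weakly equivalent to a trivial one is recovered by $\Omega^{\infty}U$.

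I would then proceed in three short steps. First, apply the forgetful functor $U:\text{Alg}_\mathcal{O}\to Ch_+$, which preserves weak equivalences by the very definition of the model structure on $\text{Alg}_\mathcal{O}$, to conclude $UY\simeq UW=red_0 s^{-1}X$ in $Ch_+$. Second, unpack the definition $\Omega^{\infty}=(-)_{triv}\circ red_0\circ I$; since $UY$ already lies in $Ch_+$, the composite $red_0\circ I$ acts as the identity on $UY$, hence $\Omega^{\infty}UY=(UY)_{triv}$. Third, apply the functor $(-)_{triv}:Ch_+\to \text{Alg}_\mathcal{O}$ to the chain-level equivalence $UY\simeq UW$; because weak equivalences in $\text{Alg}_\mathcal{O}$ are detected on the underlying chain complex, $(-)_{triv}$ preserves weak equivalences, so $(UY)_{triv}\simeq (UW)_{triv}=W$. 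Assembling the zigzag
\[
Y\;\simeq\;\Omega X\;\overset{\Phi}{\longleftarrow}\;W\;=\;(UW)_{triv}\;\simeq\;(UY)_{triv}\;=\;\Omega^{\infty}UY
\]
yields the desired equivalence.

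The argument is essentially bookkeeping once Lemma \ref{Trivial_algebra_on_Omega} is available; there is no substantial obstacle. The only delicate point, worth spelling out, is that $(-)_{triv}$ preserves weak equivalences: this follows because a map of trivial $\mathcal{O}$-algebras is a weak equivalence in $\text{Alg}_\mathcal{O}$ exactly when its underlying chain map is a quasi-isomorphism, which is the defining property of the projective model structure on $\text{Alg}_\mathcal{O}$. Note also that no cofibrant or fibrant replacement is needed at any stage, and in particular the characteristic $0$ hypothesis inherited through Lemma \ref{Trivial_algebra_on_Omega} is the only place where the ground field hypothesis enters.
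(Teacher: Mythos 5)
Your proof is correct and follows essentially the same route as the paper: both arguments invoke Lemma \ref{Trivial_algebra_on_Omega} to replace $Y$ by the trivial algebra $\Omega^{\infty}s^{-1}X=(red_0 s^{-1}X)_{triv}$, apply $U$, and then apply $\Omega^{\infty}$ (i.e.\ $(-)_{triv}\circ red_0$) using that it preserves weak equivalences and acts as the identity on trivial algebras. Your explicit remarks that $red_0\circ I$ is the identity on $Ch_+$ and that $(-)_{triv}$ preserves weak equivalences are just the bookkeeping the paper leaves implicit in the isomorphism $\Omega^{\infty}U\Omega^{\infty}\cong\Omega^{\infty}$.
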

\begin{proof}
	From Lemma \ref{Trivial_algebra_on_Omega}, we deduce that $Y\simeq  \Omega^{\infty} s^{-1}X.$ When we apply the forgetful functor $U,$ we get the quasi-isomorphism in chain complexes $UY\simeq U \Omega^{\infty} s^{-1}X.$ We apply again the functor $\Omega^{\infty}$ and get the $\mathcal{O}$-algebra weak equivalences \begin{center}
		$\Omega^{\infty}UY \simeq \Omega^{\infty}U \Omega^{\infty} s^{-1}X \cong \Omega^{\infty} s^{-1}X \simeq Y.$
	\end{center}	
\end{proof}
In this sense, strictly speaking we will just say that any loop space in  $\text{Alg}_\mathcal{O}$ has a trivial $\mathcal{O}$-algebra structure. 
\subsection{Homotopy pushouts in $\text{Alg}_\mathcal{O}$ }
We assume in this section that the ground field $\Bbbk$ is of characteristic 0 in order to describe an explicit model for homotopy pushouts. In the particular case of describing  the suspension of $\mathcal{O}$-algebras, the ground field $\Bbbk$ can be of any characteristics.
\subsection*{Construction of a cylinder of a quasi-free  $\mathcal{O}$-algebra }
We give in this part the construction of a cylinder of a quasi-free $\mathcal{O}$-algebra in the same line that the definition for differential graded Lie algebras in \cite[II.5.]{Tanre}, and for closed DGL's in \cite[$§$ 5.]{Urtzi_Yves_Tanre_Aniceto}.  

Let $(\mathcal{O}(V), d)$ be a quasi-free $\mathcal{O}$-algebra, and let $V'$ be a copy of $V.$ We define :
\begin{enumerate}
	\item [-] $\mathcal{O}(V)\widehat{\otimes} \mathcal{I}:=(\mathcal{O}(V\oplus V'\oplus sV'), D),$ where:
	$(sv')_n=v'_{n-1},$ $Dv'=0, $ $Dsv'=v',$ $Dv=dv.$
	\item [-] $\lambda_0: (\mathcal{O}(V),d)\longrightarrow \mathcal{O}(V)\widehat{\otimes} \mathcal{I}$ the canonical injection;
	\item[-] $p: \mathcal{O}(V)\widehat{\otimes} \mathcal{I} \longrightarrow (\mathcal{O}(V),d)$ is the $\mathcal{O}$-algebra morphism given by:
	
	$p(v)=v;$ $p(v')=p(sv')=0;$ $p$ is a quasi-isomorphism since $\mathcal{O}(V'\oplus sV')$ is acyclic.
	\item[-] $i: \mathcal{O}(V)\widehat{\otimes} \mathcal{I} \longrightarrow \mathcal{O}(V)\widehat{\otimes} \mathcal{I} $ is the degree $+1$ $\mathcal{O}$-algebra derivation given by: $i(v)=sv';$ $i(sv')=i(v')=0;$
	\item[-] The $\mathcal{O}$-algebra derivation of degree $0,$ $\theta= Di+iD$ verifies $\theta D=D \theta, \theta(v')=\theta(sv')=0.$ We have the induced automorphism of $\mathcal{O}$-algebras $e^{\theta}= \underset{n\geq 0}{\Sigma} \frac{\theta^n}{n!}$ ( with inverse $e^{-\theta}$). 
\end{enumerate}
The automorphism $e^{\theta}$ is well defined for the following reason: let $v\in V_n.$ We write down explicitly the differential $d$ of $(\mathcal{O}(V),d)$ by $d=d_1+ d_2+ ...,$ where $d_kv\in \mathcal{O}(k)\underset{\Sigma_k}{\otimes}V^{\otimes k},$ for any given $k.$  Computation gives that $\theta^2 (v)= \theta i(d_2v + d_3v+...) \in    \mathcal{O}(V_{<n})\widehat{\otimes} \mathcal{I}.$ Therefore we deduce inductively that for any $x\in \mathcal{O}(V)\otimes I,$ there always exist an integer $n_x$ such that $ \theta ^{n_x}(x)=0.$

\begin{enumerate}
	\item [-] We define the second injection $\lambda_1: (\mathcal{O}(V),d)\longrightarrow \mathcal{O}(V)\widehat{\otimes} \mathcal{I}$ by, $\lambda_1 (v)=e^{\theta}(v).$
\end{enumerate}	
The couple $ (\mathcal{O}(V)\widehat{\otimes} \mathcal{I}, \lambda_0, \lambda_1, p)$ forms a cylinder of $(\mathcal{O}(V),d).$

\subsection*{Construction of homotopy pushouts in $\text{Alg}_\mathcal{O}$}

From now to short expressions we set the cobar-bar functor of $\mathcal{O}$-algebras $ (-)^c: Z \longmapsto Z^c:=B^{c}(B(\mathcal{O}),B(\mathcal{O},Z)),$ and the cylinder object defined above and associated to $Z^c$ will be denoted simply by $Z^c\widehat{\otimes} \mathcal{I}:= (\mathcal{O}(V\oplus V'\oplus sV'), D_1),$ where $V= B(\mathcal{O}, Z).$

Let $Z\overset{f}{\longrightarrow} Y$ in $\text{Alg}_\mathcal{O},$ we apply the  functor $(-)^c$ to get the weakly equivalent morphism 
$Z^c \overset{f^c}{\longrightarrow } Y^c.$  Let us consider the commutative diagram  in $\text{Alg}_\mathcal{O}:$
\begin{center}
	$
	\xymatrix{
		Z^c \ar[r]^{f^c} \ar[d]_-{ i_0}^{\simeq} &	Y^c\ar@/^/[ddrr]^{1_{Y^c}}_{=} \ar[d]^{\simeq}_{\pi_2}  \\
		Z^c \widehat{\otimes} \mathcal{I} \ar[r] ^-{\pi_1} \ar@/_/[dr]_{p} & Z^c\widehat{\otimes} \mathcal{I} \underset{Z^c}{\amalg}Y^c	 \ar@{.>}[drr]|-{f^cp\amalg 1_{Y^c}}&  & &\\
		&Z^c \ar@/_/[rr]_{f^c} 	& &Y^c 	
	}
	$
\end{center}
where the square in the middle is a pushout.  
From the lower triangle, we can then build the following  factorization of $f^c:$ 
\begin{center}
	$  \xymatrix{
		Z^c \ar[r]^-{\pi_1 i_1}  \ar@/_/[drr]_{f^c}&  Z^c \widehat{\otimes} \mathcal{I}\underset{Z^c}{\amalg}Y^c \ar[dr]_{\simeq}^-{f^c p \amalg 1_{Y^c}} & \\
		&  & Y^c}$
\end{center}
We use this later factorization to replace $f^c$ in the diagram $D^c: \xymatrix{ X^c&Z^c\ar[r]^{f^c} \ar[l]_{g^c}&Y^c } $  by the cofibration  $\pi_1i_1.$ 
\begin{proposition}\label{Homotopy_pushout}
	Given a $\mathcal{O}$-algebra diagram $D: 	 \xymatrix{ X&Z\ar[r]^{f} \ar[l]_{g}&Y }, $ a homotopy pushout of $D$ is given by
	$C_D=X^c \underset{Z^c}{\amalg} Z^c \widehat{\otimes} \mathcal{I}\underset{Z^c}{\amalg}Y^c.$ 
	Namely \begin{center}
		$C_D= \text{ colim}_{\text{Alg}_\mathcal{O} }( \xymatrix{ X^c&Z^c\ar[r]^-{ \pi_1i_1} \ar[l]_{g^c}&  	Z^c \widehat{\otimes} \mathcal{I}\underset{Z^c}{\amalg}Y^c  }).$
	\end{center}
\end{proposition}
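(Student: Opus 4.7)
The plan is to dualize the proof of Proposition \ref{holims}, using the cobar-bar functor $(-)^c$ as the cofibrant replacement (in place of the path object) and the cylinder $Z^c \widehat{\otimes} \mathcal{I}$ to replace $f^c$ by a cofibration between cofibrant objects. By Theorem \ref{GJ} and Remark \ref{Remark cobar-bar duality in any characteristics}, the counit $D^c \longrightarrow D$ is a levelwise weak equivalence of diagrams with cofibrant vertices, so the desired chain of weak equivalences will be $C_D \simeq C_{D_{vc}} \simeq \text{colim}_{\text{Alg}_\mathcal{O}}(D_{vc}) = \text{hocolim}(D)$.

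First I would verify the key properties of the cylinder $Z^c \widehat{\otimes} \mathcal{I}$. The projection $p$ is a quasi-isomorphism because $\mathcal{O}(V' \oplus sV')$ is acyclic, and since $p\lambda_0 = p\lambda_1 = 1_{Z^c}$, both $\lambda_0$ and $\lambda_1$ are weak equivalences by two-out-of-three. The injection $\lambda_0 = i_0$ is a cofibration directly from the quasi-free description. Pushing out the acyclic cofibration $\lambda_0$ along $f^c$ then makes $\pi_2$ an acyclic cofibration between cofibrant objects, and the identity $(f^c p \amalg 1_{Y^c}) \circ \pi_2 = 1_{Y^c}$ forces $f^c p \amalg 1_{Y^c}$ to be a weak equivalence, again by two-out-of-three. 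This yields the claimed factorization of $f^c$ as the map $\pi_1 i_1$ followed by a weak equivalence.

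With this factorization in hand, the ordinary pushout $C_D$ of the modified span $X^c \overset{g^c}{\longleftarrow} Z^c \overset{\pi_1 i_1}{\longrightarrow} Z^c \widehat{\otimes} \mathcal{I} \underset{Z^c}{\amalg} Y^c$ (whose vertices are all cofibrant and whose $\pi_1 i_1$ leg is a cofibration) computes the homotopy pushout of that span by the standard model-categorical fact that pushouts along cofibrations between cofibrant objects are homotopy invariant. To conclude, a functoriality argument parallel to step $(2)$ of Proposition \ref{holims} would apply the forgetful functor $U: \text{Alg}_\mathcal{O} \longrightarrow Ch_+$ to the natural cube comparing $C_D$ with $C_{D_{vc}}$, use that hocolims in $Ch_+$ are computed by the same double-mapping-cylinder formula, and identify the hofibres of the vertical maps of the cube as weak equivalences, from which one concludes $C_D \simeq C_{D_{vc}}$.

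The main obstacle will be verifying that $\pi_1 i_1$ is genuinely a cofibration in $\text{Alg}_\mathcal{O}$: because $f^c$ itself need not be a cofibration, the pushout map $\pi_1$ is not automatically a cofibration, so one must exploit the explicit quasi-free presentation of the pushout, namely $Z^c \widehat{\otimes} \mathcal{I} \underset{Z^c}{\amalg} Y^c \cong \mathcal{O}(W \oplus V' \oplus sV')$ with $Y^c = \mathcal{O}(W)$, and recognize $\pi_1 i_1$ as a cellular extension on the new generators after the change of basis produced by the $\mathcal{O}$-algebra automorphism $e^{\theta}$ defining $\lambda_1$. A secondary technical point is that the higher-dimensional analogue of this construction (used implicitly in the functoriality step) must be set up so that the explicit double-mapping-cylinder model is preserved by levelwise $U$.
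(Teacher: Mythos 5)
There is a genuine gap, and it sits exactly at the point where the ``dualization'' of Proposition \ref{holims} is not mechanical. In the pullback case the paper could apply the forgetful functor $U:\text{Alg}_\mathcal{O}\longrightarrow Ch_+$ to the comparison cube because $U$ is a right adjoint and preserves limits, so the limit of $\mathcal{O}$-algebras is computed on underlying chain complexes. For pushouts this fails: $U$ does not preserve colimits of $\mathcal{O}$-algebras (already $U(\mathcal{O}(A)\amalg\mathcal{O}(B))=U\mathcal{O}(A\oplus B)$ is not $U\mathcal{O}(A)\oplus U\mathcal{O}(B)$), so your concluding step --- ``apply the forgetful functor $U$ to the natural cube \ldots\ use that hocolims in $Ch_+$ are computed by the same double-mapping-cylinder formula'' --- does not reduce anything to chain complexes. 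The paper's proof instead applies the \emph{left} adjoint $(-)^{ab}\simeq\Sigma^{\infty}=UB(\mathcal{O},-)$ to the (cofibrant, quasi-free) diagram: this functor does commute with the colimit, the resulting leg $UB(\mathcal{O},\pi_1 i_1)$ is visibly an injection in $Ch$, and one then uses that $\Sigma^{\infty}$ detects weak equivalences between quasi-free algebras (as in Lemma \ref{Proposition The suspension of free algebra}) to pull the conclusion back to $\text{Alg}_\mathcal{O}$. Without replacing $U$ by $\Sigma^{\infty}$ and without the detection statement, your comparison of (co)fibres of the cube does not go through; note also that in the pushout setting one must compare homotopy \emph{cofibres} of the horizontal maps, not hofibres.

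Your middle paragraph (pushout along a cofibration of cofibrant objects is a homotopy pushout) could in principle give an alternative, more standard route, but it hinges on the point you yourself flag as the main obstacle: that $\pi_1 i_1$ is an honest cofibration in $\text{Alg}_\mathcal{O}$. The paper never proves this and does not need to, precisely because it only checks injectivity \emph{after} applying $\Sigma^{\infty}$, where the claim is immediate. As written, your argument therefore rests either on an unverified cofibrancy claim or on a comparison step that uses the wrong adjoint; the fix in both cases is the same, namely to run the whole comparison through the abelianization rather than the forgetful functor.
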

\begin{proof}	
	This is analogue as the proof of Proposition \ref{holims}. We simply replace holims by hocolims and $Z^{\mathcal{I}}$ by $Z\widehat{\otimes} \mathcal{I}.$
	\begin{enumerate}
		\item Let $D_i: \xymatrix{ X_i&Z_i\ar[r]^{f_i} \ar[l]_{g_i}&Y_i }, i\in \{1, 2\},$ be two $\mathcal{O}$-algebra diagrams so that $D_1 \overset{\simeq}{\longrightarrow} D_2.$ One make the following computations:
		\begin{align*}
		\Sigma^{\infty}C_{D_1}\overset{(1)}{\simeq} &colim_{Ch}( UB(\mathcal{O},X_1) \overset{B(\mathcal{O},g_1 )}{\longleftarrow} UB(\mathcal{O},Z_1) \overset{  	UB(\mathcal{O}, \pi_1i_1)  }{\longrightarrow} UB(\mathcal{O}, 	Z_1^c \widehat{\otimes} \mathcal{I}\underset{Z_1^c}{\amalg}Y_1^c  ))\\
		\overset{(2)}{\simeq} & hocolim_{Ch}( UB(\mathcal{O},X_1) \overset{B(\mathcal{O},g_1 )}{\longleftarrow} UB(\mathcal{O},Z_1) \overset{  	UB(\mathcal{O}, \pi_1i_1)  }{\longrightarrow} UB(\mathcal{O}, 	Z_1^c \widehat{\otimes} \mathcal{I}\underset{Z_1^c}{\amalg}Y_1^c  ))\\
		\overset{}{\simeq} & hocolim_{Ch}( UB(\mathcal{O},X_2) \overset{}{\longleftarrow} UB(\mathcal{O},Z_2) \overset{  	 }{\longrightarrow} UB(\mathcal{O}, 	Z_2^c \widehat{\otimes} \mathcal{I}\underset{Z_2^c}{\amalg}Y_2^c  ))\\
		\simeq &\Sigma^{\infty}C_{D_2}
		\end{align*}
		where \begin{enumerate}
			\item [-] $(1)$ is obtained by applying the left adjoint functor $(-)^{ab}$(which is equivalent in this case to $\Sigma^{\infty}$) to the diagram $C_{D_1};$
			\item[-] $(2)$ is obtained by replacing colim with hocolim since $ 	UB(\mathcal{O}, \pi_1i_1) $ is an injection ( cofibration).
		\end{enumerate}
		One then obtain  $C_{D_1} \simeq C_{D_2},$ and deduce that the functor $C_-$ preserves weak equivalences of diagrams of the form $\bullet \longleftarrow \bullet \longrightarrow \bullet$ in $\text{Alg}_\mathcal{O}.$ One deduce from this property that $C_{D_{vc}}\overset{\simeq}{\longrightarrow} C_D,$ where $D_{vc}\overset{\simeq}{\longrightarrow} D$ is a virtually cofibrant replacement of $D.$
		\item Now we prove that $C_{D_{vc}}  \overset{\simeq}{\longrightarrow}  \text{colim}_{\text{Alg}_\mathcal{O}}(D_{vc}) .$ We consider $D_{vc}: X\longleftarrow Z \longrightarrow Y,$ and we form the diagram:
		\begin{center}
			\begin{tikzcd}[row sep=scriptsize, column sep=scriptsize]
				& Z \arrow[dl,  " "] \arrow[rr, "h_1"]  & & Y \arrow[dl,   " "]  \\ X \arrow[rr, crossing over , " h_2"]  & & colim _{ \text{Alg}_\mathcal{O}}D_{vc} \\
				& Z^c \arrow[dl, " "] \arrow[uu, " \simeq "] \arrow[rr , "h_4"] & & Z^c \widehat{\otimes} \mathcal{I}\underset{Z^c}{\amalg}Y^c   \arrow[uu, " \simeq "] \arrow[dl, " "] \\
				X^c \arrow[uu, "\simeq"] \arrow[rr , " h_3"] & & C_{D_{vc}} \arrow[uu, " "] \\
			\end{tikzcd}
		\end{center}
		The square obtained from the homotopy fibers of the horizontal morphisms $h_1, h_2, h_3$ and $h_4$ is described as follows: 
		\begin{center}
			$\xymatrix{
				hocofibre(h_1) \ar[r]^-{(1) }_-{\simeq}\ar[r]&	hocofibre(h_2) \\
				hocofibre(h_4)\ar[u]^{(2)}_\simeq \ar[r]_{(3)}^{\simeq}	& 	hocofibre(h_3)  \ar[u]^{(4)}_{ }
			}$      
		\end{center}	
		where 
		\begin{enumerate}
			\item [(1)]  is a weak equivalence as the top square of the cube is a homotopy pushout;
			\item[(2)] is a weak equivalence because the back face of the cube is trivially a homotopy pushout;
			\item[(3)] is a weak equivalence. In fact the functor $\Sigma^{\infty}(-)$ applied to the diagram at the bottom (of cofibrant algebras) gives a homotopy pushout diagram  in $Ch.$ One then deduce that 
			\begin{center}
				$\Sigma^{\infty}	hocofibre(h_4) \overset{\simeq}{ \longrightarrow} \Sigma^{\infty}	hocofibre(h_3)$
			\end{center}
			and equivalently we get 
			\begin{center}
				$	hocofibre(h_4) \overset{\simeq}{ \longrightarrow} 	hocofibre(h_3)$
			\end{center}
		\end{enumerate}
		By this we conclude that $(4)$ is a weak equivalence, and therefore that $C_{D_{vc}}  \overset{\simeq}{\longrightarrow}  \text{colim}_{\text{Alg}_\mathcal{O}}(D_{vc}) .$

	\end{enumerate}
	
\end{proof}
\begin{remark}
	If $D: 	 \xymatrix{ X&Z\ar[r]^{f} \ar[l]_{g}&Y }$ is a diagram of quasi-free $\mathcal{O}$-algebras, then we don't need the cofibrant replacement functor $(-)^{c}$ in the construction, and we have 
	\begin{center}
		$C_D=X \underset{Z}{\amalg} Z\widehat{\otimes} \mathcal{I}\underset{Z}{\amalg}Y.$ 
	\end{center}

\end{remark}

\begin{mylemma}\label{Proposition The suspension of free algebra}
	Let $(\mathcal{O}(V),d)$ be a quasi-free algebra with the notation for the differential:  $d=d_1+d_2+ ...$.
	Then $\Sigma (\mathcal{O}(V),d) \simeq (\mathcal{O}(sV'), D_1),$ where $D_1(sv'):=-sd_1v'$ and $V'$ is a copy of $(V,d).$
	
\end{mylemma}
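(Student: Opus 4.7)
The plan is to apply Proposition \ref{Homotopy_pushout} to the diagram $0 \leftarrow (\mathcal{O}(V),d) \to 0$. Since $(\mathcal{O}(V),d)$ is quasi-free, hence cofibrant, no cobar-bar replacement is needed, and
$$\Sigma(\mathcal{O}(V),d) \;\simeq\; 0 \underset{(\mathcal{O}(V),d)}{\amalg} (\mathcal{O}(V)\widehat{\otimes}\mathcal{I}) \underset{(\mathcal{O}(V),d)}{\amalg} 0.$$
Unfolding the explicit cylinder $\mathcal{O}(V)\widehat{\otimes}\mathcal{I} = (\mathcal{O}(V\oplus V'\oplus sV'), D)$ with $\lambda_0(v)=v$ and $\lambda_1(v)=e^\theta(v)$, this pushout is $\mathcal{O}(V\oplus V'\oplus sV')$ modulo the $\mathcal{O}$-ideal generated by $V$ and by $e^\theta(V)$.

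First I would reduce modulo the ideal $\langle V\rangle$: since $d(V)\subset\langle V\rangle$, the quotient is the quasi-free algebra $(\mathcal{O}(V'\oplus sV'),\bar D)$ with $\bar D(v')=0$ and $\bar D(sv')=v'$. Next, using that $\mathcal{O}(1)=\Bbbk$ gives $i(d_1v)=sd_1v'$ while $i(d_kv) \in \langle V\rangle$ for $k\ge 2$ (since $i$ is a derivation and $d_kv$ carries $k$ factors from $V$), one obtains $\overline{\theta(v)} = v'+sd_1v'$, so
$$\overline{e^\theta(v)} \;=\; v'+sd_1v' + \sum_{m\ge 2}\frac{\overline{\theta^m(v)}}{m!},$$
where each higher summand has strictly higher operadic weight in the generators $V'\oplus sV'$.

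The main step is to show that this further quotient is isomorphic, as a differential $\mathcal{O}$-algebra, to $(\mathcal{O}(sV'),D_1)$ with $D_1(sv')=-sd_1v'$. I would introduce the decreasing filtration on $\mathcal{O}(V'\oplus sV')$ by operadic weight in $V'$, with respect to which the defining relation $\overline{e^\theta(v)}\equiv 0$ has triangular leading part $v' \equiv -sd_1v'$. The nilpotency of $\theta$ on each generator, remarked in the cylinder construction, makes the series $e^\theta(v)$ finite, so the recursive substitution $[v']=-[sd_1v']-[\text{higher}]$ terminates in each homological degree and produces an $\mathcal{O}$-algebra isomorphism of the quotient with the underlying algebra $\mathcal{O}(sV')$. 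The induced differential is then forced by $D([sv']) = [v']$, which, after this change of generators, becomes precisely $-sd_1v'$.

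The main technical obstacle is verifying rigorously that the triangular elimination induces the differential $D_1$ exactly, with no additional correction from the higher-order terms. I would set this up via a spectral sequence attached to the weight filtration: the $E^1$-page collapses to $(\mathcal{O}(sV'),D_1)$, and boundedness of the filtration in each fixed total homological degree ensures convergence. This yields the desired quasi-isomorphism $\Phi:(\mathcal{O}(sV'),D_1)\xrightarrow{\simeq}\Sigma(\mathcal{O}(V),d)$, $sv'\mapsto[sv']$, completing the proof.
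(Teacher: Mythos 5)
Your reduction of the pushout is sound and in fact more explicit than the paper's: identifying $\Sigma(\mathcal{O}(V),d)$ with $\mathcal{O}(V\oplus V'\oplus sV')/\langle V,\,e^{\theta}(V)\rangle$, passing to $(\mathcal{O}(V'\oplus sV'),\bar D)$, and computing $\overline{e^{\theta}(v)}=v'+sd_1v'+(\text{weight}\geq 2)$ all check out (your observation that $i(d_kv)\in\langle V\rangle$ for $k\geq 2$ is exactly the right point). The genuine gap is in the endgame. First, the map $\Phi(sv')=[sv']$ is not a chain map out of $(\mathcal{O}(sV'),D_1)$: in the quotient one has $D[sv']=[v']=-[sd_1v']-\sum_{m\geq 2}[\overline{\theta^m(v)}]/m!$, and the higher-weight decomposable terms you yourself exhibit do not vanish in the quotient (only in its abelianization), so $D\Phi(sv')\neq\Phi(D_1sv')$ on the nose. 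Second, the spectral sequence of the weight filtration does not close the gap: it gives $E^1=H_*(\mathcal{O}(sV'),D_1)$ converging to the homology of the quotient, but without degeneration at $E^1$ this does not identify the two homologies, and degeneration is precisely what is in question. A quasi-free algebra with differential $D_1+D_{\geq 2}$ is in general \emph{not} quasi-isomorphic to the one with only $D_1$ (the cobar construction $B^c_\theta(Q,Y)=(\mathcal{O}(Q(Y)),d+d_w)$ is a standing counterexample in this very paper), so something special about the suspension must be invoked.

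The paper supplies exactly that missing ingredient: it only verifies the comparison after applying the abelianization, where all decomposables die and the relation collapses to $v'=-sd_1v'$ exactly, and then concludes because $B(\mathcal{O},\psi)\simeq\psi^{ab}$ is a quasi-isomorphism and quasi-isomorphisms between quasi-free (hence cofibrant) $\mathcal{O}$-algebras are detected on Quillen homology $TQ(-)\simeq UB(\mathcal{O},-)$. In other words, the decisive step is not a filtration argument on the quotient but the $TQ$-Whitehead principle. To repair your proof you would either need to invoke that principle (at which point your explicit elimination of the generators $V'$ becomes unnecessary), or construct by hand the automorphism of $\mathcal{O}(sV')$ conjugating $D_1+(\text{higher})$ into $D_1$, which is essentially equivalent work.
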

\begin{proof} We set for short $Z=(\mathcal{O},d);$
	
	In  Proposition \ref{Homotopy_pushout}, we have proved that $(0 \underset{Z}{\amalg} Z \widehat{\otimes} \mathcal{I}\underset{Z}{\amalg}0 , D) \simeq \Sigma Z.$
	
	Since $(e^{\theta})^{ab}(v)= v'+sd_1v',$ we deduce that in $(0 \underset{Z}{\amalg} Z \widehat{\otimes} \mathcal{I}\underset{Z}{\amalg}0)^{ab},$
	\begin{align*}
	[Dsv'] &=[v']\\
	&= [v'+sd_1v-sd_1v'] \\
	&= [-sd_1v']
	\end{align*}
	Now we consider the morphism of $\mathcal{O}$-algebras  
	\begin{align*}
	\psi: (\mathcal{O}(sV'), D_1) & \longrightarrow (0 \underset{Z}{\amalg} Z \widehat{\otimes} \mathcal{I}\underset{Z}{\amalg}0, D)
	\end{align*}
	given by $ \psi(sv')= [sv']. $
	
	This is a well defined chain complex morphism since $[D\psi(sv')] = \psi (D_1(sv'))$ and in addition $B(\mathcal{O}, \psi)\simeq \psi^{ab}$ is a quasi-isomorphism. We deduce that $\psi$ is a quasi-isomorphism. 
\end{proof}
	\begin{remark}\label{Remark The suspension of free algebra}
	The result of Lemma \ref{Proposition The suspension of free algebra} holds in general when the ground field $\Bbbk$ is of any characteristics. In fact, we have the following pushout diagram 
	\begin{center}
		$\xymatrix{ \mathcal{O}(V) \ar[r]\ar[d]& \mathcal{O}(V\oplus sV)\simeq 0\ar[d]\\
			0\ar[r]& \mathcal{O}(sV)
		}$
	\end{center}
	This  is also a homotopy pushout diagram, thus we deduce that $\Sigma \mathcal{O}(V)\simeq  \mathcal{O}(sV).$
	
\end{remark}
\begin{corollary}\label{Suspension_algebra_is_free}We assume that the ground field $\Bbbk$ is of any characteristics.
	Given an $\mathcal{O}$-algebra $Z,$ then 
	$\Sigma Z$ is the free   $\mathcal{O}$-algebra    $(\mathcal{O} (sUB(\mathcal{O},Z)), d_1),$ 
	where $d_1$ is the internal differential induced by the differential of $(B(\mathcal{O},Z),d).$ 
\end{corollary}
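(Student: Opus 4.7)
The plan is to reduce to the quasi-free case via a cofibrant replacement and then invoke Lemma~\ref{Proposition The suspension of free algebra}. First, take the cobar-bar resolution $Z^c := B^{c}(B(\mathcal{O}), B(\mathcal{O}, Z))$, which by Theorem~\ref{GJ} and Remark~\ref{Remark cobar-bar duality in any characteristics} is a cofibrant replacement of $Z$ over a field of any characteristics. By construction $Z^c$ is a quasi-free $\mathcal{O}$-algebra of the form $(\mathcal{O}(V), d + d_w)$, where the underlying graded complex $V$ is built from $UB(\mathcal{O}, Z)$, the internal piece $d$ comes from the differentials of $B(\mathcal{O})$ and of $B(\mathcal{O}, Z)$, and $d_w$ is the twisting derivation associated to the twisting cochain $\theta : \widetilde{B(\mathcal{O})} \to \mathcal{O}$. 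Since the suspension functor preserves weak equivalences between cofibrant objects, $\Sigma Z \simeq \Sigma Z^c$.

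Next, apply Lemma~\ref{Proposition The suspension of free algebra} to obtain $\Sigma Z^c \simeq (\mathcal{O}(sV'), D_1)$ with $D_1(sv') = -s\, d_1 v'$, where $d_1$ is the linear (arity-one preserving) part of the total differential $d + d_w$. The algebraic key is that $\widetilde{B(\mathcal{O})}(1) = 0$ because $B(\mathcal{O})$ is a reduced cooperad; hence $\theta$ vanishes in arity one and the twisting derivation $d_w$ strictly increases $\mathcal{O}$-weight. Consequently $d_w$ contributes nothing to the linear part, so $d_1$ is exactly the internal differential on $V$, which under the identification of $V$ with $UB(\mathcal{O}, Z)$ is precisely the bar differential of $B(\mathcal{O}, Z)$.

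Finally, identify $(\mathcal{O}(sV'), D_1)$ with $(\mathcal{O}(sUB(\mathcal{O}, Z)), d_1)$ to conclude. To make the argument work in arbitrary characteristics, replace the appeal to Lemma~\ref{Proposition The suspension of free algebra} (whose proof uses the exponential automorphism $e^{\theta}$ and therefore requires characteristic zero) by the explicit homotopy pushout computation of Remark~\ref{Remark The suspension of free algebra}, applied summand-by-summand to the weight filtration of the quasi-free algebra $Z^c$.

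The main obstacle is the identification step: the cobar-bar resolution canonically produces generators carrying an extra $B(\mathcal{O})(-)$ layer on top of $UB(\mathcal{O}, Z)$, and one must verify that after extracting the linear part of the differential and suspending, this outer layer does not obstruct the identification with the free $\mathcal{O}$-algebra on $sUB(\mathcal{O}, Z)$. This is essentially a bookkeeping statement about the weight filtration of $d + d_w$, but making it rigorous without losing the ``any characteristics'' hypothesis is the delicate point.
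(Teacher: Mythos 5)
Your proposal is correct and takes essentially the same route as the paper: the paper's proof consists precisely of the two steps you describe, namely replacing $Z$ by the cobar--bar resolution $Z^{c}$ (using Theorem \ref{GJ} and Remark \ref{Remark cobar-bar duality in any characteristics}) and then applying Lemma \ref{Proposition The suspension of free algebra} together with Remark \ref{Remark The suspension of free algebra}. The ``extra $B(\mathcal{O})$ layer'' you single out as the delicate point is passed over in silence by the paper's own two-line proof; it is resolved by observing that $(sV',D_{1})$ is the suspension of the indecomposables $(V,d_{1})$ of the cofibrant model, which compute $TQ(Z)\simeq UB(\mathcal{O},Z)$.
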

\begin{proof}
	We make the following computation
	\begin{align*}
	\Sigma Z & \simeq \Sigma B^{c}(B(\mathcal{O}),B(\mathcal{O},X)) \ \ \  \ \ \ (\text{Using Thm \ref{GJ}  and Remark \ref{Remark cobar-bar duality in any characteristics}})\\
	& \simeq \mathcal{O} (sUB(\mathcal{O},Z))  \ \ \  \ \ \ (\text{ Using Proposition \ref{Proposition The suspension of free algebra} and Remark \ref{Remark The suspension of free algebra} })
	\end{align*}
\end{proof}
Finally, we remind the following relation between holims and hocolims.
\begin{mylemma}\label{Lemma Filtered hocolim commutes with finite limits}
	In $\text{Alg}_\mathcal{O},$ filtered homotopy colimits, that are colimits of  filtered diagrams, commute with finite homotopy limits.
\end{mylemma}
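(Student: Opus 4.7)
The plan is to reduce the statement to the analogous property in chain complexes by pushing everything through the forgetful functor $U: \text{Alg}_\mathcal{O} \longrightarrow Ch_+$. The strategy rests on the fact that $U$ creates weak equivalences (by definition of the projective model structure) and therefore reflects them, so any commutation of functors that holds after applying $U$ holds in $\text{Alg}_\mathcal{O}$.

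First, I would verify that $U$ transports the two operations in question into their counterparts in $Ch_+$. For filtered colimits, since the monad $\mathcal{O}(-) = \underset{n\geq 0}{\oplus} \mathcal{O}(n)\underset{\Sigma_n}{\otimes}(-)^{\otimes n}$ is built from tensor products, direct sums, and coinvariants under a finite group action, each of which commutes with filtered colimits in $Ch_+$, the forgetful functor $U$ preserves ordinary filtered colimits. Moreover, in $Ch_+$ filtered colimits already model filtered homotopy colimits because taking homology commutes with filtered colimits (so filtered colimits preserve quasi-isomorphisms). Consequently $U$ takes a filtered hocolim in $\text{Alg}_\mathcal{O}$ to the filtered hocolim of the corresponding diagram in $Ch_+$, up to weak equivalence. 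For finite homotopy limits, Proposition \ref{holims} gives the explicit model $UP_D \simeq \text{holim}_{Ch_+}(UD)$ for pullback diagrams, and the same argument extends to arbitrary finite diagrams by iterating the path-object replacement; thus $U$ also carries finite holims in $\text{Alg}_\mathcal{O}$ to finite holims in $Ch_+$.

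Second, I would invoke the classical fact that in $Ch_+$ over a field $\Bbbk$, filtered homotopy colimits commute with finite homotopy limits. This follows because homology computes all relevant homotopy-theoretic invariants, and in vector spaces filtered colimits are exact and therefore commute with finite limits; equivalently, one can observe that $Ch_+$ satisfies Kuhn's requirement $3$ from the introduction of the paper.

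Combining these three inputs, if $F: I \times J \longrightarrow \text{Alg}_\mathcal{O}$ is a diagram with $I$ filtered and $J$ finite, then applying $U$ to both $\text{hocolim}_I \text{holim}_J F$ and $\text{holim}_J \text{hocolim}_I F$ and the natural comparison map between them yields the corresponding comparison map in $Ch_+$, which is a quasi-isomorphism by the classical fact; since $U$ reflects weak equivalences, the original map is already a weak equivalence in $\text{Alg}_\mathcal{O}$. The main obstacle I anticipate is the careful bookkeeping needed to show that the explicit model of finite holims from Proposition \ref{holims}, extended to arbitrary finite diagrams, is sent by $U$ to a diagram whose limit computes the homotopy limit in $Ch_+$; this requires checking that the iterated path-object replacements produce fibrations in $Ch_+$ at each stage, which in turn uses that $U$ preserves the construction $red_0(\mathcal{I}\otimes -)$ and that $p_0^X, p_1^X$ are surjections on underlying complexes.
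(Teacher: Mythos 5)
Your proposal is correct and follows essentially the same route as the paper: the paper's proof is a one-line reduction to the fact that the property holds in $Ch_+$ and that the forgetful functor $U$ commutes with finite limits and filtered colimits, which is exactly the skeleton you flesh out. The additional details you supply (that $U$ reflects weak equivalences, that filtered colimits in $Ch_+$ are already homotopy colimits, and that the explicit path-object models of Proposition \ref{holims} are carried by $U$ to homotopy limits in $Ch_+$) are the correct justifications left implicit in the paper.
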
	
\begin{proof}
	This follows from the fact that this property is true in $Ch_+,$ and that the forgetful functor $U: \text{Alg}_\mathcal{O} \longrightarrow Ch_+$ commutes with finite limits and filtered colimits.
\end{proof}

\section{Goodwillie approach in functor calculus}\label{Section Goodwillie Calculus}
We assume in that section that the ground field $\Bbbk$ is of any characteristics.
Let  $\mathcal{C}$ and $\mathcal{D}$ be any of the model categories $\text{Alg}_\mathcal{O}, Ch_+$ or $Ch.$  We remind in this section the theory of functor calculus, for functors $F: \mathcal{C} \longrightarrow \mathcal{D}.$ We follow the lines of \cite[$§$ 4 and $§$ 5]{Kuhn07}( and implicitly \cite{GIII03} ), except that in our case our functors are not required to be continuous. This was a way for Kuhn to get an assembly map $F(X)\otimes K \longrightarrow F(X\otimes K),$ where $K \in sSets,$ and $X \in \mathcal{C}.$ In fact,  we will see (in Lemma \ref{Multilinear_assembly_map}) that homotopy functors $F: Ch\longrightarrow Ch$ have a natural assembly map (at least at the level of the homotopy category $HoCh$).
However, we will require our functors to be homotopy(preserve weak equivalences), which is a weaker version of being continuous, since continuous implies homotopy.

\begin{definition}[Homotopy functor]
	Let $\mathcal{C}$ and $\mathcal{D}$ be any of the model categories $\text{Alg}_\mathcal{O},$   $ Ch_+,$ or $Ch$ and $F:\mathcal{C}\longrightarrow \mathcal{D}$ be a functor.
	\begin{enumerate}
		\item The functor $F$ is reduced if $F(0)\simeq 0;$
		\item $F$ is a  homotopy functor if it preserves weak equivalences.
		\item $F$ is finitary if it preserves filtered homotopy colimits.
	\end{enumerate}
	
\end{definition}

\begin{definition}[$n$-excisive functor]
	Let $\mathcal{C}$ and $\mathcal{D}$ be any of the model categories $\text{Alg}_\mathcal{O},$   $ Ch_+$ or $Ch$ and $F:\mathcal{C}\longrightarrow \mathcal{D}$ be a homotopy functor.
	\begin{enumerate} \item An $n$-cube in $\mathcal{C}$ is a functor $\mathcal{X}: \mathcal{P}(\underline{n})\longrightarrow \mathcal{C},$ where $\mathcal{P}(\underline{n})$
		is the poset of subsets of $\underline{n}:=\{1, ..., n\}.$

		\item The  functor $F:\mathcal{C}\longrightarrow \mathcal{D}$ is called $n$-excisive if whenever $\mathcal{X}$ is a strongly coCartesian $n+1$-cube in $\mathcal{C}, F(\mathcal{X})$ is a cartesian cube in $\mathcal{D}.$ 
		
	\end{enumerate}		
\end{definition}

\begin{definition}
	[\cite{Kuhn07},  4.6] 	Let $\mathcal{C}$ be any of the model categories $\text{Alg}_\mathcal{O},$ $ Ch_+$ or $Ch.$  
	Let $X\in \mathcal{C}$ and $T$ be a finite set. We define the joint $X*T,$  of $X$ and $T, $ to be the homotopy cofiber of the folding map\begin{align*}
	X*T=&\text{hocof }(\underset{T}{\amalg}X\overset{\bigtriangledown }{\longrightarrow}X)
	\end{align*}
\end{definition}
\begin{example}Using Proposition \ref{Homotopy_pushout}, we make the following computation:
	for $X\in \text{Alg}_\mathcal{O},$ 
	\begin{enumerate}
		\item [-]$X*\underline{0}= X*\emptyset=B^c(B(\mathcal{O}), B(\mathcal{O}, X));$
		\item [-]$X*\underline{1}=cB^c(B(\mathcal{O}), B(\mathcal{O}, X));$
		\item [-] $X*\underline{2}=\Sigma B^c(B(\mathcal{O}), B(\mathcal{O}, X)).$
	\end{enumerate}

\end{example}
Let $\mathcal{C}$ and $\mathcal{D}$ be any of the model categories $\text{Alg}_\mathcal{O},$ $ Ch_+$ or $Ch$ and let $F: \mathcal{C}\longrightarrow  \mathcal{D}$ be a homotopy and reduced functor.   For $X\in \mathcal{C},$ define the $n$-cube 
\begin{center}
	$\chi_n(X): \mathcal{P}(n)\longrightarrow\mathcal{C}$ by $\chi_n(X):T\longmapsto X*T.$
\end{center}
This is a strongly coCartesian $\underline{n}$-cube (see \cite[lemma 7.1.4]{Walt06}), the fact is that homotopy colimits commute with themselves. 	One set 
\begin{center}
	$T_{n-1}F(X):=\underset{T\in \mathcal{P}(\underline{n})-\{\emptyset\}}{holim} F(\chi_n(X)(T))$
\end{center}
If $F$ is $n$-excisive, then the natural map 
$t_{n-1}F: F(X)=F( \chi_n(X)(\emptyset))\longrightarrow T_{n-1}F(X)$
is a weak equivalence.
Write $T^{i}_{n-1}F$ defined inductively by $T^{i+1}_{n-1}F:=T_{n-1}(T^{i}_{n-1}F)$
and  \begin{center}
	$P_{n-1}F:=\text{hocolim }(F\overset{t_{n-1}F}{\longrightarrow} T_{n-1}F\overset{T_{n-1}(t_{n-1}F)}{\longrightarrow} T_{n-1}(T_{n-1}F) \overset{T^{2}_{n-1}(t_{n-1}F)}{\longrightarrow}...) $
\end{center}
\begin{example}\label{P_1 approximation}$ T_1F(X)=\text{holim }(F(X*\underline{1})\longrightarrow F(X*\underline{2})\longleftarrow F(X*\underline{1}));$
	if $F$ is reduced then $F(X*\underline{1})\simeq 0$ and we deduce  that $T_1F(X)\simeq \Omega F(\Sigma X);$
	Therefore inductively we get
	\begin{center}
		$P_1F(X)\simeq \underset{p\rightarrow \infty}{\text{hocolim } }\Omega^{p}F\Sigma^{p}$
	\end{center}
	
\end{example}
\begin{definition}[homogeneous functors]
	Let $F:\mathcal{C}\longrightarrow \mathcal{D}$ be a homotopy and reduced functor. $F$ is called $n$-homogeneous if \begin{enumerate}
		\item [-] $F$ is $n$-excisive and 
		\item[-] $P_{n-1}F\simeq 0.$
	\end{enumerate}
\end{definition}

When $\mathcal{D}= \text{Alg}_\mathcal{O},$ we make the following remark:

\begin{remark}\label{trivial_algebra_structure} Let $\mathcal{C}=\text{Alg}_\mathcal{O} \text{ , } Ch_+$ or $Ch.$
	If a functor $F:\mathcal{C}\longrightarrow \text{Alg}_\mathcal{O}$ is $n$-homogeneous, then for any $X\in \mathcal{C}, F(X)$ has a trivial $\mathcal{O}$-algebra structure. In fact Goodwillie \cite[ Lemma 2.2]{GIII03} proves  in a completely general argument that there is a homotopy pullback diagram 	\begin{center}
		$\xymatrix{
			P_nF \ar[r]^{} \ar[d]_{}  & 	P_{n-1}F \ar[d]^{}\\
			0 \ar[r]^{}& 	R_nF}$  
	\end{center}
	, where $R_nF: \mathcal{C}\longrightarrow \text{Alg}_\mathcal{O}$ is $n$-homogeneous.
	Thus if $F$ is $n$-homogeneous, then $F\simeq P_nF\simeq \Omega R_nF.$  Therefore, when the ground field is of characteristics 0, we can rewrite $F$ as $F\simeq \Omega^{\infty}UF,$ where $U: \text{Alg}_\mathcal{O}\longrightarrow Ch_+$ is the forgetful functor (see Lemma \ref{Loop_algebra_is_infinite_loop}).
\end{remark}

Since $F=T^{0}_{n-1}F,$ the functor $P_{n-1}F$ is equipped with a map $F\longrightarrow P_{n-1}F.$ In addition the inclusion of categories $\mathcal{P}(\underline{n})\longrightarrow \mathcal{P}(\underline{n+1})$ induces a map $T_nF\longrightarrow T_{n-1}F$ which extends formally to give a map $q_nF: P_nF\longrightarrow P_{n-1}F$ which is a fibration (see \cite[Page 664]{GIII03}). By inspection this map is again a fibration in $\text{Alg}_\mathcal{O}$  and in $ Ch_+,$ since the maps $T_nF\longrightarrow T_{n-1}F$ will always be a surjection, and filtered colimits of surjections is again a surjection.
\begin{theorem}\cite[1.13]{GIII03}
	A homotopy functor $F:\mathcal{C}\longrightarrow \mathcal{D}$ determines a tower of functors $\{ P_nF:\mathcal{C}\longrightarrow \mathcal{D}\}_n,$		
	where $P_nF$ are $n$-excisif, $q_nF:P_nF\longrightarrow P_{n-1}F$ are fibrations, the functors	$D_nF=\text{ hofibre }(q_nF)$ are $n$-homogeneous.
	
\end{theorem}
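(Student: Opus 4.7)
The plan is to follow Goodwillie's original argument from \cite{GIII03}, checking that each formal step goes through in our chain-complex setting. All that is needed are the existence of homotopy limits and colimits (constructed in Section \ref{Section Homotopy limits and homotopy colimits}) and the crucial fact that filtered homotopy colimits commute with finite homotopy limits (Lemma \ref{Lemma Filtered hocolim commutes with finite limits}), both of which are already in hand. The argument decomposes into three claims: $P_nF$ is $n$-excisive, $q_nF$ is a fibration, and $D_nF$ is $n$-homogeneous.

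To show $P_nF$ is $n$-excisive, I would verify that the canonical map $t_n P_nF : P_nF \longrightarrow T_n P_nF$ is a weak equivalence. By Lemma \ref{Lemma Filtered hocolim commutes with finite limits}, the finite homotopy limit $T_n$ commutes with the filtered homotopy colimit $\text{hocolim}_k \, T_n^k F$, yielding an identification $T_n P_nF \simeq \text{hocolim}_k \, T_n^{k+1} F$. Under this identification, $t_n P_nF$ becomes the canonical shift map of the telescope, which is a weak equivalence; evaluating at any strongly coCartesian $(n+1)$-cube $\mathcal{X}$ then shows that $P_nF(\mathcal{X})$ is cartesian. The claim that $q_nF$ is a fibration was already addressed in the paragraph preceding the theorem: the inclusion $\mathcal{P}(\underline{n}) \subset \mathcal{P}(\underline{n+1})$ induces a surjective natural map $T_nF \longrightarrow T_{n-1}F$, and surjections are preserved by filtered colimits in $Ch_+$ and $\text{Alg}_\mathcal{O}$, so $q_nF$ is itself a surjection, hence a fibration in these model structures.

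For the $n$-homogeneity of $D_nF = \text{hofibre}(q_nF)$, I would treat the two conditions separately. First, $D_nF$ is $n$-excisive because it is the objectwise homotopy fibre of a map between two $n$-excisive functors, and homotopy fibres — being finite homotopy limits — commute with the finite homotopy limit defining the excisiveness test. Second, to show $P_{n-1}D_nF \simeq 0$, I would apply $P_{n-1}$ to the fibration sequence $D_nF \longrightarrow P_nF \longrightarrow P_{n-1}F$. Since $P_{n-1}$ is built from finite homotopy limits (via $T_{n-1}$) and one filtered homotopy colimit, Lemma \ref{Lemma Filtered hocolim commutes with finite limits} guarantees that it preserves homotopy fibres, giving a fibration sequence
\begin{equation*}
P_{n-1}D_nF \longrightarrow P_{n-1}P_nF \longrightarrow P_{n-1}P_{n-1}F.
\end{equation*}
The idempotence $P_{n-1}P_{n-1}F \simeq P_{n-1}F$ follows directly from $(n-1)$-excisiveness of $P_{n-1}F$, which makes $t_{n-1}P_{n-1}F$ a weak equivalence. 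The identification $P_{n-1}P_nF \simeq P_{n-1}F$ is the main technical step: it reduces, via the commutation of $T_{n-1}$ with the telescope defining $P_nF$, to showing that the natural transformation $F \to T_nF$ becomes an equivalence after applying $P_{n-1}$, which is a cofinality argument for the inclusion $\mathcal{P}(\underline{n}) \subset \mathcal{P}(\underline{n+1})$ of indexing posets. Once this last identification is established, the right-hand map in the fibration sequence above is a weak equivalence, forcing $P_{n-1}D_nF \simeq 0$. This cofinality/telescoping identification is the most delicate point of the proof; everything else is formal manipulation of the construction of the tower.
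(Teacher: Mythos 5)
The paper itself does not prove this statement --- it is quoted from Goodwillie \cite[1.13]{GIII03}, and the only ingredient verified locally is the one you also record, namely that $q_nF$ is a surjection and hence a fibration in $Ch_+$ and $\text{Alg}_\mathcal{O}$. So your proposal is really a reconstruction of Goodwillie's argument, and it contains one genuine gap. Your proof that $P_nF$ is $n$-excisive establishes only that the canonical map $P_nF \longrightarrow T_n(P_nF)$ is a weak equivalence, and then asserts that ``evaluating at any strongly coCartesian $(n+1)$-cube'' yields cartesianness. That implication fails as stated: $T_nF(X)$ is the homotopy limit of $F$ applied to the one specific strongly coCartesian cube $\chi_{n+1}(X): T\longmapsto X*T$, so the equivalence $G\simeq T_nG$ only says that $G$ sends \emph{these particular} cubes to cartesian cubes, whereas $n$-excisiveness quantifies over all strongly coCartesian $(n+1)$-cubes, whose initial maps $\mathcal{X}(\emptyset)\longrightarrow\mathcal{X}(\{i\})$ are arbitrary. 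The missing ingredient is Goodwillie's Lemma 1.9: for an arbitrary strongly coCartesian $(n+1)$-cube $\mathcal{X}$, the natural map $F(\mathcal{X})\longrightarrow T_nF(\mathcal{X})$ factors, up to homotopy, through a cartesian cube. It is this factorization, iterated along the telescope and combined with Lemma \ref{Lemma Filtered hocolim commutes with finite limits}, that makes $P_nF(\mathcal{X})$ cartesian; and it is precisely the step where one must check that the join construction of Section \ref{Section Homotopy limits and homotopy colimits} supplies the comparison maps $\mathcal{X}(U)*T\longrightarrow \mathcal{X}(U\cup T)$ in $\text{Alg}_\mathcal{O}$. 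This is the geometric heart of the theorem and cannot be replaced by the purely formal commutation of filtered homotopy colimits with finite homotopy limits.

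A secondary, smaller inaccuracy: the identification $P_{n-1}P_nF\simeq P_{n-1}F$ is not a cofinality statement about $\mathcal{P}(\underline{n})\subset\mathcal{P}(\underline{n+1})$ --- that inclusion is not homotopy initial, so restriction of homotopy limits along it is not an equivalence for general diagrams. The standard mechanism is different: $P_{n-1}$ commutes with the finite homotopy limit defining $T_n$ and with the functors $X\longmapsto X*T$ (by associativity of the join), so $P_{n-1}(T_nF)\simeq T_n(P_{n-1}F)$; since $P_{n-1}F$ is $(n-1)$-excisive, hence $n$-excisive, it sends the strongly coCartesian cube $\chi_{n+1}(X)$ to a cartesian cube, which says exactly that $P_{n-1}F\longrightarrow T_n(P_{n-1}F)$ is an equivalence, and iterating over the telescope gives $P_{n-1}F\simeq P_{n-1}P_nF$. (Note this again invokes excisiveness, i.e.\ the content of the first paragraph.) With that identification in hand, your fibre-sequence argument for $P_{n-1}D_nF\simeq 0$, resting on Remark \ref{Remark Pn preserves finite homotopy limits}, is correct, as is your treatment of $q_nF$.
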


\begin{remark}\label{Remark Pn preserves finite homotopy limits}
	A straight consequence of Lemma \ref{Lemma Filtered hocolim commutes with finite limits} for this section is that the functor $P_n,$ which is basically a homotopy colimit, commutes with finite homotopy limits. In particular $P_n$ preserves fiber sequences.	
\end{remark}

\section{Characterization of homogeneous functors}\label{Section Characterisation of homogeneous functors}
In this section, we characterize homogeneous functors with the cross effect. Before getting to this result, we will make a couple of constructions and provide intermediate results. The characterization itself appears in Corollary \ref{Derivatives} at the end of this section.

There are two ways to define the cross effect associated to a functor. One can define it as a homotopy fiber (hofib) and we can also define it as a total homotopy fiber (thofib). These definitions are reported here bellow.

\begin{definition}[Cross-effects]
	Let $\mathcal{C}$ and $\mathcal{D}$ be any of the model categories $\text{Alg}_\mathcal{O},$ $ Ch_+$ or $Ch$ and let $F: \mathcal{C}\longrightarrow  \mathcal{D}$ be a homotopy and reduced functor.
	We define $cr_nF: \mathcal{C}^{\times n}\longrightarrow \mathcal{D},$ the $n^{th}$ cross-effect of $F,$ to be the functor of $n$ variables  given by 
	
	\begin{center}
		$ cr_nF(X_1, ..., X_n)=\text{ hofib}\{F(\underset{i\in \underline{n}}{ \amalg}X_i)\longrightarrow \underset{T\in \mathcal{P}_{0}(n)}{holim} F( \underset{i\in \underline{n-T}}{ \amalg} X_i ) \}$
	\end{center}
\end{definition}
This is equivalent to define the $n^{th}$ cross-effect of $F$ as:
\begin{center}
	$cr_nF(X_1, ..., X_n)=\text{ thofib} (T\supseteq \underline{n}\mapsto F( \underset{i\in \underline{n}-T}{\amalg}X_i)).$
\end{center}

In the particular cases where $\mathcal{C}=Ch_+$ or $Ch$ and $\mathcal{D}=Ch,$ we can also describe the cross effect of a functor $F$ using the total homotopy cofiber (thocofib)\index{thocofib: Total homotopy cofiber} of a certain cube. This dual construction, also called the "co-cross-effect",  was considered by McCarthy \cite[$1.3$]{Randy}  in studying dual calculus, and the equivalence  between the cross-effect and co-cross-effect was proved by Ching \cite[Lemma $2.2$]{Ching10}  for functors with values in spectra. Let  $W_1, ..., W_n \in \mathcal{C},$ we associate the  $n$-cube $\mathcal{X}$ in $\mathcal{C}$ defined as follows: 

\begin{enumerate}
	\item [-] $T\subseteq \underline{n}, \mathcal{X}(T):=\underset{i\in T}{\oplus} W_j;$
	\item[-] For $T \subsetneq \underline{n}$ and $j\in \underline{n}\backslash T,$ the map $\mathcal{X}(T) \longrightarrow \mathcal{X}(T\cup \{j\})$ (in the cube) is induced by the inclusion
	\begin{align*}
	\underset{i\in T}{\oplus}W_i &\longrightarrow ( \underset{i\in T}{\oplus}W_i) \oplus W_j\\
	x &\longmapsto (x,0)
	\end{align*}
\end{enumerate}
\begin{definition}[Co-cross-effects]
	Let $\mathcal{C}=Ch_+$ or $Ch$ and $F: \mathcal{C}\longrightarrow Ch$ be a homotopy functor. The $n^{th}$ co-cross	effect of F is the functor $cr^nF: \mathcal{C}^{\times n} \longrightarrow Ch$ which computes the homotopy total fiber of $F(\mathcal{X}).$ That is:
	\begin{center}
		$cr^nF(W_1, ..., W_n):=\text{ hocofib}\{ \underset{T\subsetneq \underline{n}}{hocolim} F( \underset{i\in T}{ \oplus} W_i ) \longrightarrow F(W_1\oplus ... \oplus W_n)\}.$
	\end{center}
\end{definition}

\begin{mylemma}\label{Lemma Cross-effect equal Co-Cross-effect}
	Let $\mathcal{C}=Ch_+$ or $Ch$ and $F: \mathcal{C}\longrightarrow Ch$ be a homotopy functor. Then the $n^{th}$ cross-effect of $F$ is equivalent to the $n^{th}$ co-cross-effect of $F.$ That is:
	\begin{center}
		$cr_nF(W_1, ..., W_n)   \overset{\simeq}{\longrightarrow} cr^nF(W_1, ..., W_n) $
	\end{center}
\end{mylemma}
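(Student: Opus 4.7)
The plan is to proceed by induction on $n$, leveraging the stability of $Ch$ to convert hofibers into hocofibers. The crucial ingredient I would prove first is the following sub-lemma: for any homotopy functor $G:\mathcal{C}\longrightarrow Ch$ and any $A,B\in\mathcal{C}$ with canonical inclusion $i_A:A\to A\oplus B$ and projection $p_A:A\oplus B\to A$, there is a natural quasi-isomorphism
\[
\text{hofib}\bigl(G(p_A):G(A\oplus B)\to G(A)\bigr)\ \simeq\ \text{hocofib}\bigl(G(i_A):G(A)\to G(A\oplus B)\bigr).
\]
The reason is that $p_A\circ i_A=\mathrm{id}_A$, so that $G(p_A)\circ G(i_A)=\mathrm{id}_{G(A)}$, giving a split short exact sequence of chain complexes $0\to K\to G(A\oplus B)\to G(A)\to 0$ in which $K$ is simultaneously the strict kernel of $G(p_A)$ (a surjection, hence a fibration in $Ch$) and the strict cokernel of $G(i_A)$ (an injection, hence a cofibration in $Ch$). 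Thus $K$ computes both sides up to weak equivalence, and these identifications are natural in $A$ and $B$.

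For the induction, the base case $n=1$ is obtained by specializing the sub-lemma to $A=0$ and $B=W_1$, which gives $cr_1F(W_1)\simeq cr^1F(W_1)$ directly. For the inductive step, I would decompose the defining $n$-cubes along the last coordinate: the cube $T\mapsto F\bigl(\bigoplus_{i\in\underline{n}-T}W_i\bigr)$ can be viewed as a morphism of $(n{-}1)$-cubes indexed by $\mathcal{P}(\underline{n{-}1})$ in the $\{n\}$-direction, and standard iteration of homotopy (co)limits yields
\[
cr_nF(W_1,\ldots,W_n)\simeq cr_{n-1}G(W_1,\ldots,W_{n-1}),\qquad cr^nF(W_1,\ldots,W_n)\simeq cr^{n-1}G'(W_1,\ldots,W_{n-1}),
\]
where $G(Y):=\text{hofib}\bigl(F(Y\oplus W_n)\to F(Y)\bigr)$ and $G'(Y):=\text{hocofib}\bigl(F(Y)\to F(Y\oplus W_n)\bigr)$. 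The sub-lemma gives $G\simeq G'$ naturally in $Y$, and the inductive hypothesis applied to $G'$ gives $cr_{n-1}G'\simeq cr^{n-1}G'$; composing produces the desired natural equivalence $cr_nF\simeq cr^nF$.

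The main obstacle will be justifying the cube-decomposition step cleanly: namely that the thofib of the $n$-cube is equivalent to the thofib of the $(n{-}1)$-cube obtained by taking hofibs in the last coordinate, and symmetrically for thocofib. This is a standard lemma in stable model-categorical homotopy theory but must be handled with some care because the two cubes carry opposite orientations along the $n$-axis---the cross-effect cube uses the projections killing a summand, whereas the co-cross-effect cube uses the inclusions of summands---and it is precisely the sub-lemma that bridges these two orientations. A further, more pedestrian bookkeeping point is that the naturality of the identification $G\simeq G'$ in the remaining arguments $W_1,\ldots,W_{n-1}$ must be tracked, so that the final comparison is a natural transformation of functors on $\mathcal{C}^{\times n}$ rather than merely a pointwise quasi-isomorphism.
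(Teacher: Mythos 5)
Your proof is correct, and it is essentially the argument the paper intends: the paper's own ``proof'' is a one-line deferral to Ching's lemma for spectra-valued functors (noting only that $Ch$ is stable and that finite products and coproducts agree), and the content of that cited argument is precisely the induction you carry out, with the splitting supplied by the retraction $p_A\circ i_A=\mathrm{id}$. Your writeup simply makes explicit the details the paper omits.
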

\begin{proof}
	Since $Ch$ is a stable category and that in $\mathcal{C}$ products and coproducts are isomorphic, we simply mimic Ching's proof.
\end{proof}

To understand homogeneous functors, Goodwillie\cite{GIII03} pointed the following proposition for functors with values in spectra. We reformulate it in our algebraic context though the proof follows literally [ \cite{GIII03}, proposition 3.4] and [\cite{GII92}, proposition 2.2].

\begin{proposition}\label{Cross_equival}Let $\mathcal{C}$ and $\mathcal{D}$ be any of the model categories $\text{Alg}_\mathcal{O},$ $ Ch_+$ or $Ch.$
	If $H: \mathcal{C}\longrightarrow \mathcal{D}$ is an $n-$excisive and reduced functor such that  $cr_nH\simeq 0$, then $H$ is $(n-1)-$excisive.
\end{proposition}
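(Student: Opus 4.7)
The plan is to show that $H$ sends every strongly coCartesian $n$-cube $\mathcal{X}$ to a Cartesian $n$-cube, which is equivalent to saying the total homotopy fiber $\mathrm{thofib}(H(\mathcal{X}))$ vanishes. The strategy is to reduce to the case where the initial vertex $\mathcal{X}(\emptyset)$ is contractible, using the hypothesis that $H$ is $n$-excisive, and then identify $\mathrm{thofib}(H(\mathcal{X}))$ with the $n$-th cross-effect which vanishes by assumption.

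First I would extend $\mathcal{X}$ to a strongly coCartesian $(n{+}1)$-cube $\widetilde{\mathcal{X}}$ by adjoining an extra direction that collapses $\mathcal{X}(\emptyset)$. Concretely, set $\widetilde{\mathcal{X}}(T)=\mathcal{X}(T)$ for $T\subseteq\underline{n}$ and $\widetilde{\mathcal{X}}(T\cup\{n{+}1\})$ to be the homotopy pushout of $0\leftarrow \mathcal{X}(\emptyset)\to\mathcal{X}(T)$ (built explicitly, in the $\mathrm{Alg}_\mathcal{O}$ case, via Proposition \ref{Homotopy_pushout}). Since pushouts preserve strong coCartesianness, $\widetilde{\mathcal{X}}$ is strongly coCartesian. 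Applying $H$, the $n$-excisiveness hypothesis gives that $H(\widetilde{\mathcal{X}})$ is Cartesian, i.e. $\mathrm{thofib}(H(\widetilde{\mathcal{X}}))\simeq 0$.

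Next I would use the fact that the total homotopy fiber of an $(n{+}1)$-cube decomposes as the homotopy fiber of the induced map between the total homotopy fibers of its two opposite $n$-subcubes in the $(n{+}1)$-direction. Denoting by $\mathcal{X}^{\mathrm{red}}$ the cube $T\mapsto\widetilde{\mathcal{X}}(T\cup\{n{+}1\})$, this yields a fiber sequence
\begin{equation*}
\mathrm{thofib}(H(\widetilde{\mathcal{X}}))\longrightarrow\mathrm{thofib}(H(\mathcal{X}))\longrightarrow\mathrm{thofib}(H(\mathcal{X}^{\mathrm{red}})).
\end{equation*}
Since the left term is trivial, the right-hand map is an equivalence, so it suffices to show $\mathrm{thofib}(H(\mathcal{X}^{\mathrm{red}}))\simeq 0$. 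By construction, $\mathcal{X}^{\mathrm{red}}$ is strongly coCartesian with $\mathcal{X}^{\mathrm{red}}(\emptyset)\simeq 0$, so up to weak equivalence it is the coproduct cube $T\mapsto\coprod_{i\in T}Y_i$ with $Y_i:=\mathcal{X}^{\mathrm{red}}(\{i\})$. For such a cube, together with the reducedness of $H$, a direct unwinding of definitions (or, in the $\mathcal{C}=Ch_+, Ch$ case, via Lemma \ref{Lemma Cross-effect equal Co-Cross-effect}) identifies the total homotopy fiber of $H$ applied to this cube with $cr_nH(Y_1,\ldots,Y_n)$, which is weakly trivial by hypothesis.

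The main obstacle I anticipate is justifying two of the ingredients concretely in $\mathrm{Alg}_\mathcal{O}$: first, that the extended cube $\widetilde{\mathcal{X}}$ built via iterated homotopy pushouts really is strongly coCartesian (requiring pasting of homotopy pushouts, which is where the explicit model of Proposition \ref{Homotopy_pushout} and the preservation results of Section \ref{Section Homotopy limits and homotopy colimits} are used); and second, identifying $\mathrm{thofib}$ of the coproduct cube with the cross-effect as defined via $\mathrm{hofib}$ in the general definition, since the $cr_n$--$cr^n$ comparison of Lemma \ref{Lemma Cross-effect equal Co-Cross-effect} is stated only for codomain $Ch$. For $\mathcal{D}=\mathrm{Alg}_\mathcal{O}$ one has to argue directly from the definition of $cr_n H$ as an iterated homotopy fiber that it equals the total homotopy fiber of the coproduct cube, which is essentially a formal rewriting.
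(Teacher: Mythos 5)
Your overall strategy coincides with the paper's: reduce to a strongly coCartesian cube with trivial initial vertex by forming the pushout cube $T\mapsto\mathrm{hocolim}(0\leftarrow\mathcal{X}(\emptyset)\to\mathcal{X}(T))$, use $n$-excisiveness on the resulting strongly coCartesian $(n{+}1)$-cube, and then kill the total fiber of the reduced cube using $cr_nH\simeq 0$. The decomposition of $\mathrm{thofib}$ of the $(n{+}1)$-cube as a fiber sequence is just a restatement of the paper's ``$H(\mathcal{X})\to H(\mathcal{X}')$ cartesian and $H(\mathcal{X}')$ cartesian imply $H(\mathcal{X})$ cartesian,'' so the architecture is identical.

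There is, however, one step where your justification is too quick, and it is exactly the step you dismiss as ``essentially a formal rewriting.'' The reduced cube you produce is the coproduct cube $T\mapsto\coprod_{i\in T}Y_i$, whose structure maps are \emph{inclusions}, whereas $cr_nH(Y_1,\dots,Y_n)$ is by definition the total homotopy fiber of the cube $T\mapsto H(\coprod_{i\in\underline{n}-T}Y_i)$, whose structure maps are induced by \emph{projections}. These are genuinely different cubes, and no unwinding of definitions identifies the total fiber of $H$ applied to the first with the cross-effect; the correct relation is $\mathrm{thofib}\,H(\text{inclusion cube})\simeq\Omega^{n}\,\mathrm{thofib}\,H(\text{projection cube})=\Omega^{n}cr_nH$, which holds only after passing to a stable target. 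The paper deals with this by applying the forgetful functor $U$ into $Ch$ (which commutes with total fibers) and invoking the comparison of \cite{GII92}, Proposition 2.2, to get $U\,\mathrm{thofib}\,H(\mathcal{X}')\simeq\Omega^{n}U\,cr_nH\simeq 0$; since $U$ detects equivalences and $\Omega^n$ is an invertible shift on $Ch$, the vanishing still follows from $cr_nH\simeq 0$. So your conclusion survives, but the inclusion-versus-projection comparison is a real ingredient that must be supplied (and is where the stability of $Ch$ is actually used), not a notational rewriting. With that point repaired your argument is complete and is the paper's proof.
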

\begin{proof}
	
	\begin{enumerate}
		\item [(i)]One define the $n$-cube $\mathcal{X}=\mathcal{S}^{*}(X_1, ..., X_n),$ for objects $X_1, ..., X_n$ in $\mathcal{C},$ as follows: $\forall T\subseteq [n],  \mathcal{X}(T):=\underset{i \in T}{\amalg}X_i$ and $\mathcal{X}(\emptyset)=0.$
The maps in the cube $\mathcal{X}$ are inclusions. We 
		associate to this cube $\mathcal{X}$ the $n$-cube $\mathcal{S}(X_1, ..., X_n)$ which has the same objects with $\mathcal{X},$ but where the inclusions are reversed to the projections. 
		Let $U: \mathcal{D}\longrightarrow Ch$ be the forgetful functor when $\mathcal{D}=\text{Alg}_\mathcal{O}$ and be the identity functor when $\mathcal{D}=Ch_+.$ We make the following computations:
		\begin{align*}
		U	\text{ thofib }H(\mathcal{X})\cong\text{ thofib }UH(\mathcal{X})&=\text{ thofib }UH(\mathcal{S}^{*}(X_1, ..., X_n))\\
		&= \Omega^{n}\text{ thofib }UH(\mathcal{S}(X_1, ..., X_n))\\
		&=\Omega^{n}cr_n(UH)(X_1, ..., X_n) \\
		&=\Omega^{n}Ucr_nH(X_1, ..., X_n)\simeq 0,
		\end{align*}
		One will then conclude from these that 
		$	\text{ thofib }H(\mathcal{X})\simeq 0$( or equivalently that $H(\mathcal{X})$ is cartesian) for all strongly coCartesian cubes $\mathcal{X}$ in which $\mathcal{X}(\emptyset)=0,$ since any such cube $\mathcal{X}$ is naturally equivalent to $\mathcal{S}^{*}(\mathcal{X} (\{1\}), ..., \mathcal{X} (\{n\}))$(see \cite[proposition 2.2]{GII92}).  
		\item[(ii)] 	Let $\forall T\subseteq [n],$ and $a,b\in [n].$ Given an arbitrary strongly coCartesian n-cube $\mathcal{X}$ in $\mathcal{C},$ put $\mathcal{X}'(T)=hocolim(0\longleftarrow \mathcal{X}(\emptyset)\longrightarrow \mathcal{X}(T)).$
	
		We have the following commutative diagram	
		\begin{center}
			$\xymatrix{
				\mathcal{X}(\emptyset) \ar[r]^{}\ar[d]^{} &	\mathcal{X}(T) \ar[d]^{ } \ar[r] & \mathcal{X}(T\cup \{a\})\ar[d]\\
				0 \ar[r]^{} &	\mathcal{X}'(T)  \ar[r] & \mathcal{X}'(T\cup \{a\}) }$        
		\end{center}
		where the largest square is a homotopy pushout along with the most left square.	It then follows that the most right square is also a homotopy pushout and therefore that the following square is a homotopy pushout:
		\begin{center}
			$\xymatrix{
				\mathcal{X}(T) \ar[r]^{}\ar[d]^{} &	\mathcal{X}(T\cup \{a\}) \ar[d]^{ } \\
				\mathcal{X}'(T) \ar[r]^{}& \mathcal{X}'(T\cup \{a\})  }$        
		\end{center}
		and therefore it follows that
		\begin{center}
			$\xymatrix{
				\mathcal{X}'(T) \ar[r]^{}\ar[d]^{} &	\mathcal{X}'(T\cup \{a\}) \ar[d]^{ } \\
				\mathcal{X}'(T\cup \{b\}) \ar[r]^{}& \mathcal{X}'(T\cup \{a,b\})  }$        
		\end{center}
		is a homotopy pushout diagram. This proves that
		the n-cube $\mathcal{X}'$ is strongly coCartesian and that the map $\mathcal{X}\longrightarrow \mathcal{X}'$ is a strongly cocartesian $n+1$-cube. $H$ is $n$-excisive, thus $H(\mathcal{X}) \longrightarrow  H(\mathcal{X}')$ is cartesian. In addition since $\mathcal{X}'(\emptyset)=0,$ we deduce from $(i)$ that   $H(\mathcal{X}')$ is cartesian and conclude that $H(\mathcal{X})$ is also cartesian.

	\end{enumerate}
	
\end{proof}

We get the following consequence:
\begin{corollary}\label{Cross_equi_Corro}
	Let $F$ and $G$ be two $n-$homogeneous functors $ \mathcal{C}\longrightarrow \mathcal{D} $, where $\mathcal{C}$ and $\mathcal{D}$ are any of the model categories $\text{Alg}_\mathcal{O},$ $ Ch_+$ or $Ch,$
	and a natural transformation  $F\overset{J}{\longrightarrow} G.$   If $ cr_n(J): cr_nF\overset{}{\longrightarrow} cr_nG$ is an equivalence, then so is $J.$  
\end{corollary}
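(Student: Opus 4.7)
The plan is to pass from the natural transformation $J\colon F\to G$ to its homotopy fiber and to show this fiber is trivial. Concretely, I would set $H := \mathrm{hofib}(J)$, viewed as a functor $\mathcal{C}\to\mathcal{D}$ built levelwise via the path-object construction of Proposition \ref{holims} (or simply via ordinary hofibers in $Ch_+$ or $Ch$). The corollary reduces to showing $H\simeq 0$, since then $J$ is a weak equivalence.

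First I would check the hypotheses needed to feed $H$ into Proposition \ref{Cross_equival}. The functor $H$ is reduced because $F(0)\simeq 0\simeq G(0)$, so $H(0)\simeq\mathrm{hofib}(0\to 0)\simeq 0$. It is a homotopy functor since $F$, $G$ and the hofiber construction all preserve weak equivalences. The key point is that $H$ is $n$-excisive: fibers of homotopy functors commute with finite homotopy limits, and applying $H$ to a strongly coCartesian $(n{+}1)$-cube $\mathcal{X}$ gives a cube that is levelwise the fiber of $F(\mathcal{X})\to G(\mathcal{X})$, both of which are cartesian by hypothesis; cartesianness is preserved by taking the fiber of a map of cubes, so $H(\mathcal{X})$ is cartesian.

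Next I would verify that $cr_n H\simeq 0$. Because the $n$-th cross-effect is itself defined as a (total) homotopy fiber, and because total homotopy fibers commute with homotopy fibers, there is a natural equivalence
\begin{equation*}
cr_n H(X_1,\ldots,X_n)\;\simeq\; \mathrm{hofib}\bigl(cr_n(J)\colon cr_n F(X_1,\ldots,X_n)\to cr_n G(X_1,\ldots,X_n)\bigr).
\end{equation*}
The right-hand side is contractible by the assumption that $cr_n(J)$ is a weak equivalence. Applying Proposition \ref{Cross_equival} then yields that $H$ is $(n-1)$-excisive, hence $H\simeq P_{n-1}H$.

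Finally I would use that $P_{n-1}$ preserves fiber sequences (Remark \ref{Remark Pn preserves finite homotopy limits}) to compute
\begin{equation*}
P_{n-1}H\;\simeq\;\mathrm{hofib}\bigl(P_{n-1}F\to P_{n-1}G\bigr)\;\simeq\;\mathrm{hofib}(0\to 0)\;\simeq\; 0,
\end{equation*}
since $F$ and $G$ are $n$-homogeneous. Combining with the previous step, $H\simeq 0$, so $J$ is a weak equivalence. I expect the only subtle point is the compatibility between the $cr_n$ construction and the hofiber in our three categories $\mathrm{Alg}_\mathcal{O}$, $Ch_+$ and $Ch$; in $\mathrm{Alg}_\mathcal{O}$ one must check this using the explicit path-object model from Proposition \ref{holims}, but since both $cr_n$ and $\mathrm{hofib}$ are finite homotopy limits, the exchange is formal.
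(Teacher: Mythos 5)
Your argument is correct and follows essentially the same route as the paper: form $H=\mathrm{hofib}(J)$, identify $cr_nH$ with $\mathrm{hofib}(cr_nJ)\simeq 0$, apply Proposition \ref{Cross_equival} to get that $H$ is $(n-1)$-excisive, and conclude $H\simeq P_{n-1}H\simeq\mathrm{hofib}(P_{n-1}F\to P_{n-1}G)\simeq 0$. The paper is only slightly more explicit at the very end, invoking the long exact sequence of the fiber sequence to pass from $H\simeq 0$ to $J$ being a weak equivalence.
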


\begin{proof}
	Let $H=\text{ hofib}(F\overset{J}{\longrightarrow}  G ).$ $H$ is $n-$homogeneous and then $n-$excisive. By hypothesis $cr_nH\cong \text{ hofib}(cr_nF\overset{cr_nJ}{\longrightarrow}  cr_nG )\simeq 0.$
	The functor $H$ gathers then the hypothesis of  Proposition \ref{Cross_equival}, thus $H$ is $n-1-$excisive . Hence we get
	\begin{center}
		\begin{tabular}{clllccc}
			$H\simeq P_{n-1}H$ &   $=$ hofib ( & $P_{n-1}F$ &  & $\longrightarrow $& &$P_{n-1}G )=0 $ \\
			&  & $\shortparallel$& &  && $\shortparallel $\\
			&  & $0$ &  &  & &$0$ 
		\end{tabular}	
	\end{center}
	One deduce from the long exact sequence obtained from the homotopy fiber sequence of $J$ that $J$ is a weak equivalence. 
	
\end{proof}

	\begin{definition}\label{diagonal_functor}
	Let $\mathcal{C}$ and $\mathcal{D}$ be any of the model categories $\text{Alg}_\mathcal{O},$ $ Ch_+$ or $Ch,$ and $F:\mathcal{C}\longrightarrow \mathcal{D}$ be a homotopy and reduced functor.
	\begin{enumerate}
		\item 	The functor  $L_nF:\mathcal{C}^n \longrightarrow \mathcal{D}$ is obtained from $cr_n F$ by 
		\begin{center}
			$L_nF(X_1, ...,X_n)\simeq \underset{p_i \rightarrow \infty}{\text{hocolim }}\Omega^{p_1 +...+p_n}cr_n F(\Sigma ^{p_1}X_1, ..., \Sigma ^{p_n}X_n)$
		\end{center}
		In the case that $\mathcal{D}=\text{Alg}_\mathcal{O},$  this filtered homotopy colimit  can be seen as a homotopy colimit in the underlying category of chain complexes.

		\item The functor $\triangle_nF: \mathcal{C} \longrightarrow \mathcal{D}$ is obtained from $L_nF$ by:
		\begin{center}
			$\triangle_nF=(L_nF)\circ \triangle$
		\end{center}where $\triangle: \mathcal{C}\longrightarrow \mathcal{C}^{\times n}$ is the diagonal map.
		The symmetric group $\Sigma_n$ acts on $\triangle_nF$ by permuting its $n$ entries of the cross effect $cr_nF.$ 
		\item The functor $\widehat{\triangle}_nF(X):  \mathcal{C} \longrightarrow Ch$ is obtained from $\triangle_nF$ by dropping the functor $red_0.$ Namely,
		\begin{center}
			$ \widehat{\triangle}_nF(X):=\underset{p_i \rightarrow \infty}{\text{hocolim }}s^{-p_1 -...-p_n}cr_n (UF)(\Sigma ^{p_1}X, ..., \Sigma ^{p_n}X)$
		\end{center}
		where $U: \text{Alg}_\mathcal{O} \longrightarrow Ch$ is the forgetful functor and this colimit is taken in the category $Ch.$ 	The symmetric group $\Sigma_n$ acts on $\widehat{\triangle}_nF(X)$ by permuting its $n$ entries of the cross effect $cr_nUF.$ 
	\end{enumerate}
	
\end{definition}

\begin{remark}

	The functor $L_nF$ of Definition \ref{diagonal_functor} can also be seen as the stabilization of the cross effect, that  is the functor obtained by applying the first Taylor approximation functor $P_1$ to each variable position of the multi-variable functor $cr_nF.$ For instance, 
	\begin{enumerate}
		\item $L_1F=P_1F$ (see Example \ref{P_1 approximation});
		\item $L_2F(X,Y)=P_1(Y\longmapsto P_1(X\longmapsto cr_2(X,Y)));$  
		\item and so on.
	\end{enumerate}
\end{remark}
We assume from now, when it is not specified, that the ground field $\Bbbk$ is of characteristic 0.

\begin{mylemma}\label{Triangle_as_Chain_complex}
	Let $\mathcal{C}$  be any of the model categories $\text{Alg}_\mathcal{O},$ $ Ch_+$ or $Ch,$ and $F:\mathcal{C}\longrightarrow \text{Alg}_\mathcal{O}$  be a homotopy and reduced functor. Then for any $X\in \mathcal{C},$ there is a weak equivalence of $\mathcal{O}$-algebras \begin{center}
		$\triangle_nF(X)\simeq (red_0 \widehat{\triangle}_nF(X))_{triv}.$
	\end{center}
\end{mylemma}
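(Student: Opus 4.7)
The plan is to observe that every term appearing in the filtered hocolim defining $\triangle_nF(X)$ is a loop $\mathcal{O}$-algebra, invoke Lemma \ref{Loop_algebra_is_infinite_loop} to rewrite each such term as $\Omega^{\infty}$ of its underlying chain complex, and then commute $\Omega^{\infty}=(red_0(-))_{triv}$ past the filtered hocolim.

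First, I would fix $(p_1,\ldots,p_n)$ and notice that $\Omega^{p_1+\cdots+p_n}cr_nF(\Sigma^{p_1}X,\ldots,\Sigma^{p_n}X)$ is manifestly a loop algebra; Lemma \ref{Loop_algebra_is_infinite_loop} then yields
\[
\Omega^{p_1+\cdots+p_n}cr_nF(\Sigma^{p_1}X,\ldots,\Sigma^{p_n}X)\;\simeq\;\Omega^{\infty}\,U\,\Omega^{p_1+\cdots+p_n}cr_nF(\Sigma^{p_1}X,\ldots,\Sigma^{p_n}X).
\]
Since $U:\text{Alg}_\mathcal{O}\longrightarrow Ch_+$ is a right Quillen functor, it commutes up to weak equivalence with the hofibers used to build both the cross effect $cr_n$ and the loop functor $\Omega$, so by iterating Lemma \ref{Trivial_algebra_on_Omega} the underlying chain complex is identified (inside $Ch$ via the inclusion $I$) with $red_0\bigl(s^{-p_1-\cdots-p_n}cr_n(UF)(\Sigma^{p_1}X,\ldots,\Sigma^{p_n}X)\bigr)$.

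Next I would check that these identifications are natural in the bonding maps of the $(p_1,\ldots,p_n)$-indexed system, so that they assemble into a weak equivalence of filtered diagrams. Passing to the hocolim and using that $\Omega^{\infty}=(red_0(-))_{triv}$ commutes with filtered hocolims one obtains
\[
\triangle_nF(X)\;\simeq\;\bigl(\,red_0\,\underset{p_i\to\infty}{\text{hocolim}}\,s^{-p_1-\cdots-p_n}cr_n(UF)(\Sigma^{p_1}X,\ldots,\Sigma^{p_n}X)\bigr)_{triv},
\]
which is precisely $(red_0\,\widehat{\triangle}_nF(X))_{triv}$ by definition. The commutation of $\Omega^{\infty}$ with filtered hocolims reduces to two standard facts: $(-)_{triv}$ creates filtered colimits through the forgetful functor $U$, and $red_0$ preserves filtered colimits in $Ch$ since filtered colimits are exact.

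The main obstacle I expect is the bookkeeping of the interaction between $U$, $\Omega^k$, $red_0$ and $s^{-k}$: one must verify that applying $U$ to the equivalence $\Omega Y\simeq(red_0 s^{-1}UY)_{triv}$ of Lemma \ref{Trivial_algebra_on_Omega} is harmless after all the $red_0$'s are eventually absorbed into arbitrarily deep shifts. Making this precise requires a cofinality argument along the directed set of multi-indices $(p_1,\ldots,p_n)$, together with the naturality of the constructions of Lemma \ref{Trivial_algebra_on_Omega} in $Y$. Once that coherence is settled, the lemma follows by direct substitution.
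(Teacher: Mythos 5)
Your proposal is correct and follows essentially the same computation as the paper: both identify $U\Omega^{p_1+\cdots+p_n}cr_nF(\Sigma^{p_1}X,\ldots,\Sigma^{p_n}X)$ with $red_0\,s^{-p_1-\cdots-p_n}cr_n(UF)(\Sigma^{p_1}X,\ldots,\Sigma^{p_n}X)$ by iterating Lemma \ref{Trivial_algebra_on_Omega}, and both conclude by commuting $red_0$ past the filtered hocolim. The one place you diverge is in justifying that $\triangle_nF(X)$ carries the trivial $\mathcal{O}$-algebra structure: the paper establishes this globally, by observing that $\triangle_nF$ is $n$-homogeneous and invoking Remark \ref{trivial_algebra_structure} (hence Goodwillie's delooping of homogeneous functors) together with Lemma \ref{Loop_algebra_is_infinite_loop}, whereas you establish it termwise, using that each stage of the colimit with $p_1+\cdots+p_n\geq 1$ is an explicit loop algebra, and then appeal to cofinality and naturality of the equivalence of Lemma \ref{Trivial_algebra_on_Omega}. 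Your termwise route is marginally more self-contained since it does not rely on the homogeneity of $\triangle_nF$, at the cost of the naturality and cofinality bookkeeping you flag; the paper's route is shorter because it outsources that coherence to the already-stated Remark \ref{trivial_algebra_structure}. Either way the substance is the same and your argument goes through.
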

\begin{proof}
	If $U: \text{Alg}_\mathcal{O}\longrightarrow Ch$ denotes the forgetful functor,  we make the following computation:
	\begin{align*}
	U\triangle_nF(X) &\simeq \underset{p_i \rightarrow \infty}{\text{ hocolim}_{Ch }}[red_0 s^{-p_1 -...-p_n}cr_nU F(\Sigma ^{p_1}X, ..., \Sigma ^{p_n}X)]\\
	&	\simeq  red_0 \underset{p_i \rightarrow \infty}{\text{ hocolim}_{Ch}} [s^{-p_1 -...-p_n}cr_n UF(\Sigma ^{p_1}X, ..., \Sigma ^{p_n}X)]
	\end{align*}
	This last equivalence is justified by the fact that  the functor $red_0$ commutes with filtered colimits.
	Now by applying the functor $( -)_{triv},$ we get the weak equivalence of $\mathcal{O}$-algebras \begin{center}
		$ (	U\triangle_nF(X))_{triv} \simeq (red_0 \widehat{\triangle}_nF(X))_{triv}.$
	\end{center}
	In addition since the functor $	\triangle_nF $ is $n$-homogeneous, we know from Remark \ref{trivial_algebra_structure} that 	$ 	\triangle_nF(X) \simeq  (	U\triangle_nF(X))_{triv},$ therefore the result follows.
\end{proof}

We are now ready to state the next theorem which was inspired by  \cite[Thm 5.12]{Kuhn07} for functors with values in stable model categories.

\begin{theorem}\label{Nick_Kuhn} Let $\mathcal{C}$ and $\mathcal{D}$ be any of the categories  $\text{Alg}_\mathcal{O},$  $Ch_+$ and $Ch,$ and $F: \mathcal{C}\longrightarrow \mathcal{D}$ be a homotopy and  reduced functor. Then there is a weak equivalence 
	\begin{center}
		$D_nF(X)\simeq \Omega^{\infty}(\widehat{\triangle}_nF(X)_{h\Sigma_n}).$  
	\end{center}	
	where $(-)_{h\Sigma_n}$ denotes the homotopy orbits. When $\mathcal{D}=Ch_+$ or $Ch$ then this result holds when the ground field $\Bbbk$ is of any characteristics.
\end{theorem}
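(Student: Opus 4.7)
The plan is to follow Goodwillie's classical strategy, using Corollary \ref{Cross_equi_Corro} as the closing step: I will exhibit a natural transformation between two $n$-homogeneous functors and show it induces an equivalence on $n$-th cross effects. Set $G(X):=\Omega^{\infty}(\widehat{\triangle}_nF(X)_{h\Sigma_n})$; the goal is $D_nF\simeq G$.

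First I would verify that both sides are $n$-homogeneous. For $D_nF$ this is given by the construction of the Goodwillie tower recalled in the previous section. For $G$, write $\widehat{\triangle}_nF=(L_nF)\circ \triangle$, where $L_nF$ is obtained from $cr_nF$ by stabilizing each entry, hence is $1$-excisive in each of its $n$ variables. The diagonal of an $n$-multilinear functor is $n$-excisive, a property preserved by the filtered homotopy colimit defining $(-)_{h\Sigma_n}$ (which commutes with finite homotopy limits by Lemma \ref{Lemma Filtered hocolim commutes with finite limits}) and by $\Omega^{\infty}$ (which is built from $red_0$ and $(-)_{triv}$ and thus commutes with homotopy pullbacks). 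The vanishing $P_{n-1}G\simeq 0$ reduces by Proposition \ref{Cross_equival} to $cr_{n-1}G\simeq 0$; but the $(n-1)$-st cross effect of the diagonal of a symmetric $n$-multilinear functor vanishes for counting reasons, since one cannot hit $n$ multilinear slots with only $n-1$ nonzero summands. Hence $G$ is $n$-homogeneous.

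Next I would compare the $n$-th cross effects. For the left side, $cr_nD_nF\simeq cr_nP_nF\simeq L_nF$: the functor $P_n$ commutes with the finite homotopy limits defining $cr_n$ (Remark \ref{Remark Pn preserves finite homotopy limits}), and on an $n$-homogeneous functor the cross effect is already stabilized in each variable. For the right side, using multilinearity of $L_nF$ one expands
\begin{center}
$L_nF(X_1\oplus\cdots\oplus X_n,\dots,X_1\oplus\cdots\oplus X_n)\simeq\bigoplus_{\varphi\in[n]^{n}}L_nF(X_{\varphi(1)},\dots,X_{\varphi(n)}),$
\end{center}
and the $n$-th cross effect selects precisely the indices $\varphi$ for which each $X_j$ appears, i.e.\ permutations $\sigma\in\Sigma_n$. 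The resulting term is $\Sigma_n$-free, so taking homotopy orbits and then $cr_n$ yields $cr_nG(X_1,\dots,X_n)\simeq \Omega^{\infty}L_nF(X_1,\dots,X_n)$. Combined with Lemma \ref{Triangle_as_Chain_complex} and, when $\mathcal{D}=\text{Alg}_\mathcal{O}$, Lemma \ref{Loop_algebra_is_infinite_loop} (which uses characteristic $0$ to identify any loop object with $\Omega^{\infty}$ of its underlying chain complex, so that the $n$-homogeneous $D_nF$ also takes the form $\Omega^{\infty}\circ(\text{chain complex})$), this matches $cr_nD_nF$. The comparison transformation itself is assembled from the canonical map $cr_nF(X,\dots,X)\to F(X)$ (built from fold and diagonal) after stabilization and passage to homotopy orbits; by Corollary \ref{Cross_equi_Corro} the induced map $D_nF\to G$ is then a weak equivalence.

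The main obstacle is the cross-effect-of-diagonal computation, i.e.\ the bookkeeping showing that $\Sigma_n$ acts freely on the multi-index sum and that, after $(-)_{h\Sigma_n}$, the $n$-th cross effect of $G$ recovers $\Omega^{\infty}L_nF$ rather than some twisted variant. The characteristic $0$ hypothesis enters only when $\mathcal{D}=\text{Alg}_\mathcal{O}$, precisely through Lemma \ref{Loop_algebra_is_infinite_loop} which allows one to replace the loop $\mathcal{O}$-algebra $D_nF(X)$ by $\Omega^{\infty}$ applied to its underlying chain complex; when $\mathcal{D}=Ch_+$ or $Ch$ this identification is automatic, so the argument goes through in any characteristic, matching the stated range of validity.
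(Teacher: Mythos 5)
Your proposal follows essentially the same route as the paper's proof: both reduce the statement to a comparison of $n$-th cross effects via Corollary \ref{Cross_equi_Corro}, with the key computation being that the $n$-th cross effect of the diagonal of the multilinear functor $cr_n(UD_nF)$ splits over maps $\pi:\underline{n}\to\underline{n}$, only the permutations survive, and the free $\Sigma_n$-action makes the homotopy orbits collapse to a single copy of $cr_n(UD_nF)(X_1,\dots,X_n)$. The only point where you are less explicit than the paper is the identification $cr_nP_nF\simeq L_nF$, which the paper isolates as Lemma \ref{map_alpha} (the commutation $P_n(L_nF\circ\triangle)\simeq L_n(P_nF)\circ\triangle$); your appeal to Remark \ref{Remark Pn preserves finite homotopy limits} is the right ingredient for exactly that step.
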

To prove this, we need the following lemma.

\begin{mylemma}\label{map_alpha} Let $\mathcal{C}$ be either  $\text{Alg}_\mathcal{O},$  $Ch_+$ or $Ch,$ and  $F: \mathcal{C}\longrightarrow \mathcal{D}$ be a homotopy and reduced functor. Then we have a weak equivalence  \begin{center}
		$P_n(L_nF\circ \triangle) \simeq L_n(P_nF)\circ \triangle $
	\end{center}
	
\end{mylemma}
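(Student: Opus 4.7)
The plan is to pull $P_n$ through every ingredient in the construction of $L_n F \circ \triangle$, namely the filtered homotopy colimit over the indices $p_i$, the finite loop $\Omega^{\sum p_i}$, the finite hofib defining the cross effect $cr_n$, and the ``assembly'' functors of the form $Y \mapsto \amalg_{i \notin T}\Sigma^{p_i}Y$ that appear inside $F$. Once each of these can be commuted, the result reassembles into $L_n(P_nF) \circ \triangle$.

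First, because $P_n$ is itself constructed as a filtered homotopy colimit of iterates $T_n^k$ composed with finite homotopy limits, and because filtered hocolims in $\text{Alg}_\mathcal{O}$, $Ch_+$ and $Ch$ commute with both themselves and finite homotopy limits (Lemma \ref{Lemma Filtered hocolim commutes with finite limits}), $P_n$ commutes with the filtered hocolim over $p_i \to \infty$. Next, using Remark \ref{Remark Pn preserves finite homotopy limits} that $P_n$ preserves finite homotopy limits, I commute $P_n$ past $\Omega^{\sum p_i}$ and then past the finite hofib appearing in the definition of $cr_n$. At this point the problem is reduced to showing, for each subset $T \in \mathcal{P}(\underline{n})$, a natural weak equivalence
\[
P_n\bigl[Y \mapsto F(\underset{i\notin T}{\amalg}\Sigma^{p_i} Y)\bigr](X) \;\simeq\; (P_nF)(\underset{i\notin T}{\amalg}\Sigma^{p_i} X).
\]

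The key step is this last commutation of $P_n$ with precomposition by the ``assembly'' functor $A_T(Y):=\amalg_{i\notin T}\Sigma^{p_i}Y$. The functor $A_T$ is linear: it is a finite coproduct of suspensions, each of which is defined by a finite homotopy pushout (Proposition \ref{Homotopy_pushout}), hence commutes with all homotopy colimits. Two consequences follow. First, $A_T$ preserves strongly coCartesian cubes, which makes $(P_nF)\circ A_T$ automatically $n$-excisive. Second, $A_T$ commutes with the join construction $X * S = \mathrm{hocof}(\amalg_S X \to X)$ used in the definition of $T_n$, so that $T_n(F\circ A_T) \simeq (T_nF)\circ A_T$. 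Iterating and passing to the filtered hocolim defining $P_n$ yields $P_n(F\circ A_T) \simeq (P_nF)\circ A_T$, as required.

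Reassembling the previous equivalences rewrites the right-hand side of step one as $\mathrm{hocolim}_{p_i\to\infty}\, \Omega^{\sum p_i} cr_n(P_nF)(\Sigma^{p_1}X,\ldots,\Sigma^{p_n}X) = (L_n(P_nF)\circ\triangle)(X)$, completing the proof. The main obstacle I expect is step four: the commutation of $P_n$ with precomposition by a linear functor is not a formal consequence of the universal property of $P_n$ and genuinely relies on the hocolim-preserving nature of the join construction. All other commutations are direct applications of Remark \ref{Remark Pn preserves finite homotopy limits} and Lemma \ref{Lemma Filtered hocolim commutes with finite limits}.
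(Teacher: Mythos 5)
Your proof is correct and follows essentially the same route as the paper: the paper likewise establishes $T_n(L_nF\circ\triangle)\simeq L_n(T_nF)\circ\triangle$ by commuting the finite homotopy limit in $T_n$ past the filtered hocolim, loops and total fibers (via Lemma \ref{Lemma Filtered hocolim commutes with finite limits}), and by using the isomorphisms $\Sigma^{p_j}(X*T)\cong(\Sigma^{p_j}X)*T$ and $\amalg(\Sigma^{p_j}X*T)\cong(\amalg\Sigma^{p_j}X)*T$, which is exactly your key step that the assembly functor $A_T$ commutes with the join; it then iterates to pass to $P_n$. Your repackaging of the argument around the single commutation $P_n(F\circ A_T)\simeq(P_nF)\circ A_T$ is just a reorganization of the same computation.
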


\begin{proof} 
	One make the following observation:
	\begin{align*}
	T_n(L_nF\circ \triangle)(X):=& \underset{T\in \mathcal{P}_0(\underline{n+1})}{\text{holim }}\underset{p_i\rightarrow \infty}{\text{hocolim }} \Omega^{p_1+...+p_n} cr_nF( \Sigma^{p_1}(X*T), ... ,  \Sigma^{p_n}(X*T))\\
	\overset{(1)}{\simeq} &  \underset{T\in \mathcal{P}_0(\underline{n+1})}{ \text{holim }}\underset{p_i\rightarrow \infty}{\text{hocolim } } \Omega^{p_1+...+p_n} cr_nF( (\Sigma^{p_1}X)*T, ... ,  (\Sigma^{p_n}X)*T) \\
	= &  \underset{T\in \mathcal{P}_0(\underline{n+1})}{ \text{holim }}\underset{p_i\rightarrow \infty}{\text{hocolim }} \Omega^{p_1+...+p_n} \text{ thofib} (A\supseteq \underline{n}\mapsto F( \underset{\underline{n}-A}{\amalg}((\Sigma^{p_j}X)*T))\\
	\overset{(2)}{\simeq} &  \underset{T\in \mathcal{P}_0(\underline{n+1})}{ \text{holim }}\underset{p_i\rightarrow \infty}{\text{hocolim }} \Omega^{p_1+...+p_n} \text{ thofib}(A\supseteq \underline{n}\mapsto F(( \underset{\underline{n}-A}{\amalg}\Sigma^{p_j}X)*T)\\
	\overset{(3)}{\simeq} & \underset{p_i\rightarrow \infty}{\text{hocolim } } \Omega^{p_1+...+p_n} \text{ thofib}(A\supseteq \underline{n}\mapsto T_nF( \underset{\underline{n}-A}{\amalg}\Sigma^{p_j}X))\\
	=& \underset{p_i\rightarrow \infty}{\text{hocolim }} \Omega^{p_1+...+p_n} cr_n( T_nF)( \Sigma^{p_1}X, ..., \Sigma^{p_n}X)\\
	=& L_n(T_nF)\circ \triangle(X)
	\end{align*}
	where \begin{enumerate}
		\item [(1)] is due to the isomorphism $ \Sigma^{p_j}(X*T)\cong  (\Sigma^{p_j}X)*T,$ for each $j;$ 
		\item[(2)] is due to the isomorphism $  \underset{\underline{n}-T}{\amalg}(\Sigma^{p_j}X*T)\cong  (\underset{\underline{n}-T}{\amalg}\Sigma^{p_j}X)*T,$ for each $T\subseteq \underline{n};$  
		\item[(3)] is because finite holims commute with filtered colimits (see Lemma \ref{Lemma Filtered hocolim commutes with finite limits}), and holims commute with loops $\Omega$ and total fibers.
	\end{enumerate}
	One also deduce from this observation steps that the following square is commutative
	\begin{center}
		$\xymatrix{
			L_nF\circ \triangle \ar[r]^{=}\ar[d]^{t_nL_nF\circ \triangle} &	L_nF\circ \triangle \ar[d]^{ L_nt_nF\circ \triangle } \\
			T_n(L_nF\circ \triangle)\ar[r]^{\simeq}& L_n(T_nF)\circ \triangle }$        
	\end{center}
	Thus we can deduce by induction on the iterations from this square that 
	\begin{center}
		$P_n(L_nF\circ \triangle) \simeq (L_nP_nF)\circ \triangle.$
	\end{center}	
\end{proof}

\begin{Proof of theorem} Let $F: \mathcal{C} \longrightarrow \mathcal{D}$ be a homotopy and reduced functor.
	Let $J$ be the composition in    $ Ch:$ \begin{center}
		$((cr_nUD_nF)\circ \triangle(X))_{h\Sigma_n}\longrightarrow (UD_nF(\underset{\underline{n}}{\amalg}X))_{h\Sigma_n}\longrightarrow UD_nF(X)$
	\end{center}
	where the first map is the projection, the second map is induced by the folding map $\underset{\underline{n}}{\amalg}X \overset{\nabla}{\longrightarrow} X,$ and if $\mathcal{D}=\text{Alg}_\mathcal{O},$ then $U: \text{Alg}_\mathcal{O} \longrightarrow Ch$ is the forgetful functor, and $U$ is simply the identity functor when $\mathcal{D}=Ch;$
	Since we want to prove that $J$ is a quasi-isomorphism, we will simply show that $cr_nJ$ is a quasi-isomorphism  and conclude using Corollary \ref{Cross_equi_Corro}.	
	
	For the sake of simplicity we set $L(X)=cr_n(UD_nF)(X, ...,X)_{h\Sigma_n}.$
	\begin{align*}
	cr_nL(X_1, ..., X_n)&=thofib(L\circ S(X_1, ..., X_n)) \\
	& =thofib(\underline{n}-T\mapsto L(\underset{T}{\amalg}X_i ))\\
	&=thofib(\underline{n}-T\mapsto cr_nD_nF(\underset{T}{\amalg}X_i,..., \underset{T}{\amalg}X_i  )_{h\Sigma_n})\\
	&=thofib(\chi)_{h\Sigma_n},
	\end{align*}
	where 	$\chi: \underline{n}-T\mapsto cr_n(UD_nF)(\underset{T}{\amalg}X_i,..., \underset{T}{\amalg}X_i  ).$ Since $cr_n(UD_nF)$ is multilinear, we deduce the weak equivalence (natural in $T$)
	\begin{align}
	\chi(\underline{n}-T)&\overset{\simeq}{\longrightarrow}\underset{\pi:\underline{n}\rightarrow T }{\prod}cr_n(UD_nF)(X_{\pi(1)}, ..., X_{\pi(n)} ).
	\end{align}
	Let's consider the map $\pi:\underline{n}\rightarrow \underline{n}$ and consider the cube $\mathcal{Y}_{\pi}$ defined by: \begin{center}
		$\mathcal{Y}_{\pi}(\underline{n}-T)=\begin{cases}
		cr_n(UD_nF)(X_{\pi(1)}, ..., X_{\pi(n)} ) & \mbox{ if } \pi(\underline{n}) \subseteq T \\ 0 & \mbox{ otherwise } 
		\end{cases}$
	\end{center}
	The morphism $(1)$ is equivalent to $
	\chi(\underline{n}-T)\overset{\simeq}{\longrightarrow}\underset{\pi:\underline{n}\rightarrow \underline{n} }{\prod}\mathcal{Y}_{\pi}(\underline{n}-T).$
	
	- If $\pi$ is not a permutation and then not surjective, we can find an element $s\notin 
	\pi(\underline{n}).$ 
	All the maps $\mathcal{Y}_{\pi}(\underline{n}-T)\longrightarrow \mathcal{Y}_{\pi}(\underline{n}-T\cup\{s\})$ are isomorphisms, so $\mathcal{Y}_{\pi}$ is cartesian.
	
	- If $\pi$ is a permutation, $thofib(\mathcal{Y}_{\pi})\cong \mathcal{Y}_{\pi}(\underline{n}) =cr_nUD_nF( X_{\pi(1)}, ..., X_{\pi(n)} ).$
	
	Therefore $ thofib(\chi)\overset{\simeq}{\longrightarrow}\underset{\pi\in \Sigma_n }{\prod}cr_n(UD_nF)(X_{\pi(1)}, ..., X_{\pi(n)} ).$ Thus 
	\begin{align*}
	thofib(\chi)_{h\Sigma_n}&\overset{\simeq}{\longrightarrow}(\underset{\pi\in \Sigma_n }{\prod}cr_n(UD_nF)(X_{\pi(1)}, ..., X_{\pi(n)} ))_{h\Sigma_n}\\
	& \overset{\simeq}{\longrightarrow}cr_n(UD_nF)(X_{1}, ..., X_{n} ).
	\end{align*}
	
	Now that we have showed that $J$ is a quasi-isomorphism, let us consider  the chain map $\alpha$ using Lemma \ref{map_alpha}: 
	\begin{center}
		$\alpha: L_n(UF)\circ \triangle\overset{p_n(L_nUF\circ \triangle)}{\longrightarrow} P_n(L_n(UF)\circ \triangle)\overset{\simeq}{\longrightarrow} (L_n(UP_nF))\circ \triangle$
	\end{center}
	which is a weak equivalence since $L_n(UF)\circ \triangle$ is $n$-excisive. Putting all these together we form the following diagram:
	\begin{center}
		$\xymatrix{
			\underset{=  (\triangle_n(UF)(X)) _{h\Sigma_n} }{	(L_n(UF)\circ \triangle) _{h\Sigma_n}\ar[r]^-{\alpha }_-{\simeq} } &((L_n(UP_nF))\circ \triangle)_{h\Sigma_n} \\
			&(cr_n(UP_nF)\circ \triangle)_{h\Sigma_n}  \ar[u]^{p_1...p_1cr_nP_nF }_{\simeq} 
			&(cr_n(UD_nF)\circ \triangle)_{h\Sigma_n}  \ar[l]^{ }_{\simeq}\ar[r]^-{J}_-{\simeq}& UD_nF			
		} 
		$        
	\end{center}
	Note that $(\triangle (UF)(X))_{h\Sigma_n} \simeq red_0 (\widehat{\triangle} (UF(X)))_{h\Sigma_n}.$ One then deduce the following specializations for $\mathcal{D}:$
	
	\begin{enumerate}
		\item [-] When $\mathcal{D}= Ch_+,$ we have : 	$red_0( \widehat{\triangle} F(X)_{h\Sigma_n}) \simeq  D_nF(X) $ 
		\item[-] 	When $\mathcal{D}= \text{Alg}_\mathcal{O},$ 
		we apply the functor $\Omega^{\infty}(-)$ to the  above diagram and since  $\Omega^{\infty}(UD_nF)\simeq D_nF $ (by Remark \ref{trivial_algebra_structure}), we get the weak equivalence  	\begin{center}
			$\Omega^{\infty}( \widehat{\triangle} F(X)_{h\Sigma_n}) \simeq  D_nF(X) $
		\end{center}
		\item[-] When $\mathcal{D}=Ch,$ the diagram itself gives the proof.		
	\end{enumerate}
	
\end{Proof of theorem}

\begin{corollary}\label{Homogeneous_Infinite} Let $\mathcal{D}=\text{Alg}_\mathcal{O},$  $Ch_+$ or $Ch,$  and $F:\text{Alg}_\mathcal{O} \longrightarrow \mathcal{D}$  be a homotopy and reduced functor. Then there is a weak equivalence 
	\begin{center}
		$D_nF(X)\simeq \Omega^{\infty} H(\Sigma^{\infty}X)$
	\end{center}
	where $H: Ch_+\longrightarrow Ch_{\Bbbk}$  is the $n$-homogeneous functor given by: $H(V):=\widehat{\triangle}_n(F \mathcal{O}(-))(V)_{h\Sigma_n}.$	
	In particular when $\mathcal{D}=Ch_+$ or $Ch$ then this result holds when the ground field $\Bbbk$ is of any characteristics.
\end{corollary}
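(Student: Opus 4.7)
The plan is to start from Theorem \ref{Nick_Kuhn}, which already gives
\[
D_nF(X)\simeq \Omega^{\infty}\bigl(\widehat{\triangle}_nF(X)_{h\Sigma_n}\bigr),
\]
and then to show that the inner functor of $X$ factors as $H\circ \Sigma^{\infty}$ for $H(V):=\widehat{\triangle}_n(F\mathcal{O}(-))(V)_{h\Sigma_n}$. Thus the whole task is to prove, naturally in $X\in\text{Alg}_\mathcal{O}$, the weak equivalence
\[
\widehat{\triangle}_nF(X)\;\simeq\;\widehat{\triangle}_n\bigl(F\mathcal{O}(-)\bigr)(\Sigma^{\infty}X)
\]
before taking $\Sigma_n$-homotopy orbits, and then to check that $H$ is $n$-homogeneous.

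The first and main step uses Corollary \ref{Suspension_algebra_is_free}: for every $p\geq 1$ we have
\[
\Sigma^{p}X\;\simeq\;\mathcal{O}\bigl(s^{p}UB(\mathcal{O},X)\bigr)\;=\;\mathcal{O}\bigl(\Sigma^{p}\Sigma^{\infty}X\bigr).
\]
Since $F$ is a homotopy functor and $cr_n(UF)$ is also a homotopy functor in each variable, we may substitute these equivalences into the defining formula
\[
\widehat{\triangle}_nF(X)=\underset{p_i\to\infty}{\text{hocolim}}\;s^{-p_1-\cdots-p_n}\,cr_n(UF)\bigl(\Sigma^{p_1}X,\dots,\Sigma^{p_n}X\bigr)
\]
and obtain, for $p_i\geq 1$,
\[
s^{-p_1-\cdots-p_n}\,cr_n(UF)\bigl(\mathcal{O}(\Sigma^{p_1}\Sigma^{\infty}X),\dots,\mathcal{O}(\Sigma^{p_n}\Sigma^{\infty}X)\bigr),
\]
which is exactly the formula for $\widehat{\triangle}_n(F\mathcal{O}(-))(\Sigma^{\infty}X)$ along the cofinal subsystem $p_i\geq 1$. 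A cofinality argument (the subdiagram with all $p_i\geq 1$ is cofinal in the filtered diagram indexing the hocolim, which is why the degenerate $p_i=0$ slots may be discarded) gives the desired equivalence. Taking $\Sigma_n$-homotopy orbits and applying $\Omega^{\infty}$ then yields $D_nF(X)\simeq \Omega^{\infty}H(\Sigma^{\infty}X)$.

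It remains to verify that $H$ is $n$-homogeneous. This is essentially a built-in feature of the construction $\widehat{\triangle}_n$: the multivariable functor $L_n G$ attached to any homotopy and reduced $G$ is multilinear by construction (it is $P_1$ applied in each variable of the cross effect), hence $\triangle_n G=L_n G\circ\triangle$ is $n$-homogeneous, and the unreduced variant $\widehat{\triangle}_n G$ inherits this property. Taking $\Sigma_n$-homotopy orbits preserves $n$-excisiveness and the vanishing of $P_{n-1}$ (as a finite hocolim it commutes with the relevant finite homotopy limits by Remark \ref{Remark Pn preserves finite homotopy limits} and Lemma \ref{Lemma Filtered hocolim commutes with finite limits}). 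Applied to $G=F\mathcal{O}(-)$, which is itself homotopy and reduced because $F$ is and $\mathcal{O}(0)=0$, this gives the $n$-homogeneity of $H$.

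The main obstacle I expect is the cofinality/bookkeeping step above: one has to make sure that replacing $\Sigma^{p_i}X$ by $\mathcal{O}(\Sigma^{p_i}\Sigma^{\infty}X)$ along the diagonal of the filtered hocolim really reproduces the indexing diagram of $\widehat{\triangle}_n(F\mathcal{O}(-))(\Sigma^{\infty}X)$ up to cofinal reindexing, and that all equivalences are natural in $X$. Once this is clear, the remaining specializations to $\mathcal{D}=Ch_+$ or $\mathcal{D}=Ch$ (which do not need characteristic $0$) follow exactly as in the corresponding cases of Theorem \ref{Nick_Kuhn}, since the only place where characteristic $0$ was used in the $\text{Alg}_\mathcal{O}$ case was Remark \ref{trivial_algebra_structure} for the identification $D_nF\simeq \Omega^{\infty}UD_nF$.
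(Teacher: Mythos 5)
Your proposal is correct and follows essentially the same route as the paper: both start from Theorem \ref{Nick_Kuhn} and then identify $\widehat{\triangle}_nF(X)$ with $\widehat{\triangle}_n(F\mathcal{O}(-))(\Sigma^{\infty}X)$ using Corollary \ref{Suspension_algebra_is_free} together with the fact that $\mathcal{O}(-)$ carries direct sums to coproducts of free algebras, and both justify the $n$-homogeneity of $H$ by noting it is the stabilized cross effect of $F\mathcal{O}(-)$. The one difference is purely in bookkeeping: the paper applies the identification $\Sigma X\simeq\mathcal{O}(s\Sigma^{\infty}X)$ a single time and shuttles back with $\Omega^{n}$ via multilinearity of $L_n$, whereas you apply the iterated version $\Sigma^{p}X\simeq\mathcal{O}(\Sigma^{p}\Sigma^{\infty}X)$ at every level of the filtered hocolim and appeal to cofinality --- which is exactly what creates the naturality-in-the-diagram worry you flag at the end, and which the paper's single-application variant sidesteps.
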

	\begin{proof}
	The functor $H$ is $n$-homogeneous since it is the $n$-th stabilization of the cross effect of  $F \mathcal{O}(-).$ 
	
	Let $X$ be an algebra over the operad $\mathcal{O},$ and $F:\text{Alg}_\mathcal{O}\longrightarrow \mathcal{D}$ be a homotopy and reduced functor.
	We observe that
	\begin{align*}
	\widehat{\triangle}_nF(X)&\simeq  \Omega^{n} (\widehat{\triangle}_nF)(\Sigma X) & (\text{ since  } L_nUF \text{ is n-multilinear })\\
	& \simeq \Omega^{n} (\widehat{\triangle}_nF)(\mathcal{O} (s\Sigma^{\infty}X)) & ( \text{ since } \Sigma X\simeq  \mathcal{O}(s\Sigma^{\infty}X ) \text{ from Corollary  }\ref{Suspension_algebra_is_free} )\\
	& \cong \Omega^{n} \widehat{\triangle}_n(F\mathcal{O}(-) )(s\Sigma^{\infty}X) & ( \text{ since } \mathcal{O}(-) \text{ commutes with coproducts})\\
	& \simeq  \widehat{\triangle}_n(F\mathcal{O}(-) )(\Sigma^{\infty}X) & (\text{ since  } L_n(UF\mathcal{O}(-)) \text{ is n-multilinear })
	\end{align*}
	One deduce from this observation that $ (\widehat{\triangle}_nF(X))_{ h\Sigma_n}\simeq   \widehat{\triangle}_n(F \mathcal{O}(-))(\Sigma^{\infty}X)_{h\Sigma_n}.$ 
	Using Theorem $\ref{Nick_Kuhn},$  we obtain the quasi-isomorphism \begin{center}
		$D_nF(X)\simeq \Omega^{\infty}(\widehat{\triangle}_n(F \mathcal{O}(-))(\Sigma^{\infty}X)_{h\Sigma_n}).$
	\end{center}
	
\end{proof}

\begin{corollary}\label{Derivatives}	Let $\mathcal{C}$ and $\mathcal{D}$ be any of the categories  $\text{Alg}_\mathcal{O} ,$  $ Ch_+$ or $Ch,$ and
	$F: \mathcal{C}\longrightarrow \mathcal{D}$  be a homotopy and reduced functor.
	If either $F$  is finitary  or if $X$ is finite we have the following quasi isomorphisms: 
	\begin{enumerate}
		\item If $\mathcal{C}= \text{Alg}_\mathcal{O},$ then there is a weak equivalence 
		\begin{center}
			$D_nF(X)\simeq \Omega^{\infty}(\widehat{\triangle}_nF(\mathcal{O}(\Bbbk))\otimes (\Sigma^{\infty}X)^{\otimes n})_{h\Sigma_n};$
		\end{center}
		
		\item If $\mathcal{C}=Ch_+$ or $Ch,$ then there is weak equivalence 
		\begin{center}
			$D_nF(X)\simeq  \Omega^{\infty}(  \widehat{\triangle}_nF(\Bbbk)\otimes (\Sigma^{\infty}X)^{\otimes n})_{h\Sigma_n}.$ 
		\end{center} 		
	\end{enumerate}
	In particular when $\mathcal{D}=Ch_+$ or $Ch$ then this result holds when the ground field $\Bbbk$ is of any characteristics.
\end{corollary}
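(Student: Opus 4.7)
The plan is to reduce the statement to a multilinear algebra computation on chain complexes, using the characterization of $D_nF$ already provided by Corollary~\ref{Homogeneous_Infinite} (when $\mathcal{C} = \text{Alg}_\mathcal{O}$) and by Theorem~\ref{Nick_Kuhn} (when $\mathcal{C} = Ch_+$ or $Ch$). In both cases the remaining content of the corollary is that the diagonal functor $\widehat{\triangle}_n G$ appearing inside the homotopy orbits splits as a tensor product of its value at a point with $V^{\otimes n}$. Write $\widehat{\triangle}_n G(V) = L_n G(V,\dots,V)$ with $G = F \mathcal{O}(-)$ in case~(1) and $G = F$ in case~(2); by construction as the iterated stabilization of the cross effect, $L_n G$ is reduced and $1$-excisive in each variable separately, so the problem becomes one of describing multilinear functors on chain complexes over a field.

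The key technical ingredient is the following multilinearity lemma: if $L \colon \mathcal{C}_1 \times \cdots \times \mathcal{C}_n \to Ch$, with each $\mathcal{C}_i \in \{Ch_+, Ch\}$, is reduced and $1$-excisive in each variable, and is either finitary in each variable or evaluated on finite inputs, then there is a natural quasi-isomorphism
\[
L(V_1, \ldots, V_n) \;\simeq\; L(\Bbbk, \ldots, \Bbbk) \otimes V_1 \otimes \cdots \otimes V_n,
\]
$\Sigma_n$-equivariant when restricted to the diagonal $V_1 = \cdots = V_n = V$. I would prove this by induction on $n$, the base case $n=1$ being the statement that a reduced, $1$-excisive, finitary functor $M \colon Ch \to Ch$ is quasi-isomorphic to $V \mapsto M(\Bbbk) \otimes V$: every chain complex over the field $\Bbbk$ is quasi-isomorphic to a direct sum of shifts of $\Bbbk$, and $M$ commutes with direct sums, with suspensions and loops, and with filtered colimits, forcing the tensor form.

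With the lemma in hand, both cases of the corollary follow. For $\mathcal{C} = Ch_+$ or $Ch$, apply the lemma to $L_n F$ and substitute into Theorem~\ref{Nick_Kuhn}; this gives statement~(2) immediately, with coefficient $\widehat{\triangle}_n F(\Bbbk) = L_n F(\Bbbk, \ldots, \Bbbk)$. For $\mathcal{C} = \text{Alg}_\mathcal{O}$, apply the lemma to $L_n(F\mathcal{O}(-))$ to obtain $H(V) \simeq (\widehat{\triangle}_n(F\mathcal{O}(-))(\Bbbk) \otimes V^{\otimes n})_{h\Sigma_n}$, where $H$ is the functor supplied by Corollary~\ref{Homogeneous_Infinite}. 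The coefficient is then identified via
\[
\widehat{\triangle}_n F(\mathcal{O}(\Bbbk)) \;\simeq\; \widehat{\triangle}_n(F \mathcal{O}(-))(\Sigma^{\infty} \mathcal{O}(\Bbbk)) \;\simeq\; \widehat{\triangle}_n(F \mathcal{O}(-))(\Bbbk),
\]
where the first equivalence is the chain of isomorphisms already produced in the proof of Corollary~\ref{Homogeneous_Infinite}, and the second uses that $\Sigma^{\infty} \mathcal{O}(\Bbbk) \simeq \Bbbk$ (the bar construction is acyclic on a free algebra).

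The main obstacle I expect is the $\Sigma_n$-equivariance in the multilinearity lemma. The tensor product identification is extracted one variable at a time, producing \emph{a priori} a nonsymmetric equivalence; one must verify that it intertwines the permutation action on the inputs of $L$ on the left with the simultaneous permutation of tensor factors and of arguments of $L(\Bbbk, \ldots, \Bbbk)$ on the right. The cleanest way to handle this is to work naturally in all $n$ inputs at once, viewing the assignment $n \mapsto L(\Bbbk,\ldots,\Bbbk)$ as a symmetric sequence and deducing the equivariant decomposition from naturality of the one-variable identification, rather than iterating it by hand and tracking signs and transpositions at each step.
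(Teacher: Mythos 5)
Your proposal is correct and follows essentially the same route as the paper: reduce via Theorem \ref{Nick_Kuhn} and Corollary \ref{Homogeneous_Infinite} to a splitting of the multilinear functor $L_n$, which is exactly the content of the paper's Lemma \ref{Multilinear_assembly_map} (applied with $V_i=\Bbbk$ and $W_i=\Sigma^{\infty}X$), with the finitary case handled, as you do, by writing $\Sigma^{\infty}X$ as a filtered colimit of its finite subcomplexes. Your proof of the multilinearity lemma by decomposing the argument into shifts of $\Bbbk$ over the field is the same cell-by-cell idea as the paper's explicit suspension-by-suspension zig-zag, and the $\Sigma_n$-equivariance issue you flag is treated no more carefully in the paper than in your sketch.
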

To prove this result which classifies homogeneous functors in our algebraic point of view, we will need the following lemma.
\begin{mylemma}\label{Multilinear_assembly_map}
	Let $L_r: (Ch_+)^{\times r} \longrightarrow Ch$ be a $r$-reduced-multilinear functor. Then for any chain complexes $V_1, ..., V_r$ and finite chain complexes $W_1, ..., W_r,$ there is a zig-zag of quasi-isomorphisms
	\begin{center}
		$W_1\otimes ...\otimes W_r \otimes L_r(V_1, ..., V_r)  \simeq L_r(W_1\otimes V_1, ..., W_r\otimes V_r).$
	\end{center}
\end{mylemma}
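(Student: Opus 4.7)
The plan is to prove the statement one variable at a time, by induction on the cell structure of each $W_i$, using the fact that a reduced multilinear functor is linear in each variable separately, so it preserves homotopy cofiber sequences variable-by-variable (and in $Ch$ cofiber and fiber sequences agree). Any finite chain complex $W_i \in Ch_+$ can be built up from the ground field $\Bbbk$ by finitely many shifts and mapping cones, so it suffices to handle two operations: (i) replacing $W_i$ by a shift $\Sigma W_i$ and (ii) replacing $W_i$ by the cone on a map between simpler finite complexes.

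For the base case, when each $W_i = \Bbbk$ both sides literally agree. For shifts, observe that applying $L_r$ to the cofiber sequence $V_i \to 0 \to \Sigma V_i$ in the $i$-th slot yields a natural equivalence $L_r(\dots, \Sigma V_i, \dots) \simeq \Sigma L_r(\dots, V_i, \dots)$, since $L_r$ is linear in that slot. Iterating, $L_r(\Sigma^{n_1} V_1, \dots, \Sigma^{n_r} V_r) \simeq \Sigma^{n_1 + \cdots + n_r} L_r(V_1, \dots, V_r) \simeq (W_1 \otimes \cdots \otimes W_r) \otimes L_r(V_1, \dots, V_r)$ whenever each $W_i = \Bbbk[n_i]$.

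For the inductive step, suppose (say) $W_1$ sits in a homotopy cofiber sequence $A \to B \to W_1$ with $A$ and $B$ already known to satisfy the claim (fixing the other $W_j$). Then $W_1 \otimes V_1 \simeq \mathrm{hocofib}(A \otimes V_1 \to B \otimes V_1)$. Applying $L_r(-, W_2 \otimes V_2, \dots, W_r \otimes V_r)$, which is linear in its first slot, gives
\[
L_r(W_1 \otimes V_1, W_2 \otimes V_2, \dots) \simeq \mathrm{hocofib}\bigl(L_r(A \otimes V_1, W_2 \otimes V_2, \dots) \to L_r(B \otimes V_1, W_2 \otimes V_2, \dots)\bigr).
\]
By the inductive hypothesis each term on the right is quasi-isomorphic to $A \otimes W_2 \otimes \cdots \otimes W_r \otimes L_r(V_1, \dots, V_r)$ (resp.\ with $B$), and the operation $- \otimes W_2 \otimes \cdots \otimes W_r \otimes L_r(V_1, \dots, V_r)$ preserves homotopy cofibers in its first slot (tensoring over a field is exact). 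Taking cofibers therefore yields the required zig-zag. Iterating this argument over all $r$ variables closes the induction.

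The main obstacle is bookkeeping naturality of the zig-zag: the cellular decomposition of a finite chain complex is not canonical, so in order for the inductive equivalences to splice into a single zig-zag one should fix, once and for all, a finite cellular filtration $0 = W_i^{(-1)} \subset W_i^{(0)} \subset \cdots \subset W_i^{(k_i)} = W_i$ (e.g.\ by internal degree, possible because $W_i$ is finite and $\Bbbk$ is a field so each $W_i$ splits as a sum of shifts of $\Bbbk$ and shifts of contractible two-cell complexes). Then the inductive zig-zags at successive stages compose unambiguously, and the hypothesis that $\Bbbk$ is a field makes even stronger statements available: in fact each $W_i$ is quasi-isomorphic to a finite sum of shifts of $\Bbbk$, so after passing to the homotopy category the result reduces directly to the shift case treated in the base step.
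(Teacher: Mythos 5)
Your proof is correct and follows essentially the same strategy as the paper's: reduce to one variable at a time using linearity in each slot, establish the shift case $L(\Sigma V)\simeq \Sigma L(V)$ from the (co)fiber sequence $V\to 0\to \Sigma V$, and then extend to a general finite $W_i$ by exploiting that over a field it decomposes into shifts of $\Bbbk$ (the paper phrases this as additivity over homogeneous generators rather than induction over cell attachments, but the content is the same). Your closing observation that everything splits over $\Bbbk$ is exactly the simplification the paper implicitly relies on.
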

\begin{proof}
	\begin{enumerate}
		\item We first consider the case $r=1$ and we want to construct a zig-zag of  quasi-isomorphisms $W\otimes L_1(V)\simeq L_1(W\otimes V),$ for a given chain complex $V$ and a finite one $W.$
		
		Let us consider the following commutative diagram
		\begin{center}
			$\xymatrix{
				L_1(sV\oplus V) \ar[d]& 0 \ar[l]_-{\simeq} \ar[r]^-{\simeq}\ar[d] &L_1(sV)\oplus s^{-1}L_1(sV)\ar[d] \\
				L_1(sV) & L_1(sV) \ar[l]_{=} \ar[r]^{=} &L_1(sV)\\
				L_1(0) \ar[u]& 0 \ar[l]_{\simeq} \ar[r]^{=} \ar[u]&\ar[u] 0\ar[u]  
			}$
		\end{center}
		A homotopy limit functor applied on each column gives the zig-zag of quasi-isomorphisms	
		\begin{align*}
		L_1(V) &\overset{\simeq}{\longleftarrow} \bullet \overset{\simeq}{\longrightarrow }s^{-1}L_1(sV) 
		\end{align*}
		where the 	homotopy limit result of the first column in due to the fact that the functor $L_1$ is linear.
		This later zig-zag can also be re-written as:
		\begin{align*}
		sL_1(V) &\overset{\simeq}{\longleftarrow} \bullet \overset{\simeq}{\longrightarrow }L_1(sV) 
		\end{align*}
		and thus equivalent to: $	\Bbbk u\otimes L_1(V) \overset{\simeq}{\longleftarrow} \bullet \overset{\simeq}{\longrightarrow }L_1(	\Bbbk u\otimes V),$
								
		for a given homogeneous element $u$ of degree $1.$
		One deduce inductively from this construction that $, \forall n\geq 0,$ we have 	$	(	\Bbbk u)^n \otimes L_1(V) \overset{\simeq}{\longleftarrow} \bullet \overset{\simeq}{\longrightarrow }L_1(	(	\Bbbk u)^n \otimes V) $	
		and therefore , given  any homogeneous element $u$ of arbitrary degree, we have a zig-zag of quasi-isomorphisms:  $\alpha_u: 	\Bbbk u\otimes L_1(V) \overset{\simeq}{\longleftarrow} \bullet \overset{\simeq}{\longrightarrow }L_1(	\Bbbk u\otimes V).$
	
		If $W=(\Bbbk u \oplus \Bbbk v, d)$ is a  chain complex with 2 generators, we set $\alpha_u+ \alpha_v$ to be the composition 	
		
		$\alpha_u+ \alpha_v: \xymatrix{ W\otimes L_1(V) & \ar[l]_-{\simeq}   \bullet  \ar[r]^-{\simeq}  & L_1(\Bbbk u\otimes V)\oplus L_1(\Bbbk v \otimes V)  \ar[rr]^-{\simeq} & & L_1(W\otimes V)
		},$
		where the last quasi-isomorphism is due to the fact that $L_1$ is linear.
		We generalize this construction inductively on the number of generators to any arbitrary finite chain complex $W.$	
		\item In the case that $r=2,$ let  $L_{2, V_1}$ be the linear functor $V_2 \longmapsto L_2(V_1, V_2);$ One have:
		\begin{align*}
		W_1\otimes W_2 \otimes L_2(V_1, V_2) \overset{\cong}{\longrightarrow}&	W_1\otimes (W_2 \otimes L_{2, V_1}( V_2))\\
		\overset{\simeq}{\longleftarrow}  \bullet 	\overset{\simeq}{\longrightarrow} &  W_1\otimes L_{2, V_1}(W_2\otimes V_2)\\
		\overset{\cong}{\longrightarrow} & W_1\otimes L_{2, W_2\otimes V_2 }(V_1)\\
		\overset{\simeq}{\longleftarrow}  \bullet 	\overset{\simeq}{\longrightarrow} & L_{2, W_2\otimes V_2 }(W_1\otimes V_1)= L_2(W_1\otimes V_1, W_2\otimes V_2)
		\end{align*}
		Again, we generalize this argument inductively to any arbitrary $r.$		
	\end{enumerate}
\end{proof}

\begin{Proof of corollary}
	
	We give the proof using the result of Theorem \ref{Nick_Kuhn} and Corollary \ref{Homogeneous_Infinite}. Namely,  let	$F: \mathcal{C}\longrightarrow \mathcal{D}$  be a homotopy and reduced functor  with
	$\mathcal{C}$ and $\mathcal{D}$ be any of the categories  $\text{Alg}_\mathcal{O} $ or $ Ch_+.$ Then  $D_nF(X)\simeq (\Omega^{\infty})red_0 H (\Sigma^{\infty}X),$
	where $H:Ch_+\longrightarrow Ch$ is a special case of a $n$-multilinear functor $(V_1, ..., V_n)\longmapsto H(V_1, ..., V_n).$
	\begin{enumerate}
		\item When $\mathcal{C}=\mathcal{D}= \text{Alg}_\mathcal{O},$  $H(V)=\widehat{\triangle}_n(F \mathcal{O}(-))(V)_{h\Sigma_n}=L_n(F \mathcal{O}(-))(V, ..., V)_{h\Sigma_n}.$
		
		If  $ \Sigma^{\infty}X$ is finite, then using Lemma \ref{Multilinear_assembly_map}, we have a $\Sigma_n$-equivariant zig-zag of  quasi-isomorphisms:
		\begin{center}
			$(\Sigma^{\infty}X)^{\otimes n} \otimes \widehat{\triangle}_n(F \mathcal{O}(-))(\Bbbk) 	\overset{\simeq}{\longleftarrow}  \bullet 	\overset{\simeq}{\longrightarrow} \widehat{\triangle}_n(F \mathcal{O}(-))(\Sigma^{\infty}X)$
		\end{center} 
		and therefore we deduce the quasi-isomorphism 
		\begin{center}
			$(\Sigma^{\infty}X)^{\otimes n} \underset{_{h\Sigma_n}}{\otimes }\widehat{\triangle}_n(F \mathcal{O}(-))(\Bbbk) 	\overset{\simeq}{\longleftarrow}  \bullet 	\overset{\simeq}{\longrightarrow} \widehat{\triangle}_n(F \mathcal{O}(-))(\Sigma^{\infty}X)_{h\Sigma_n}.$
		\end{center} 
		In addition, if $F$ is finitary then $L_n(F \mathcal{O}(-))$ is finitary on each variable. In this case for any arbitrary algebra $X,$ we rewrite  $\Sigma^{\infty}X$ as a filtered colimit of its finite subcomplexes and then apply again Lemma \ref{Multilinear_assembly_map} to these finite subcomplexes as above and recover a quasi-isomorphism 	
		\begin{center}
			$(\Sigma^{\infty}X)^{\otimes n} \underset{_{h\Sigma_n}}{\otimes }\widehat{\triangle}_n(F \mathcal{O}(-))(\Bbbk) 	\overset{\simeq}{\longleftarrow}  \bullet 	\overset{\simeq}{\longrightarrow} \widehat{\triangle}_n(F \mathcal{O}(-))(\Sigma^{\infty}X)_{h\Sigma_n}.$
		\end{center}
		\item	In all other cases of the categories $\mathcal{C}$ and $\mathcal{D,}$ we refer again to  Theorem \ref{Nick_Kuhn} and Corollary \ref{Homogeneous_Infinite} to chose the appropriate $H$ and follows an analogue road map as in $1.$ 
	\end{enumerate}

\end{Proof of corollary}

	\begin{definition}[Goodwillie derivatives]\label{definition_derivative}\index{$\partial_nF$ is the $n^{th}$ Goodwillie derivative.}
	Let  $\mathcal{D}$ be either $\text{Alg}_\mathcal{O} $ or $ Ch_+.$ 
	\begin{enumerate}		
		\item If $F: \text{Alg}_\mathcal{O}\longrightarrow \mathcal{D}$ is a homotopy and reduced functor,   then 
		$ \widehat{\triangle}_nF(\mathcal{O}(\Bbbk))$ is called the $n^{th}$ derivative (or $n^{th}$  Goodwillie derivative) of $F$ and is denoted $\partial_nF.$
		\item 	If $F:Ch_+\longrightarrow \mathcal{D} $ is a homotopy and reduced functor,   then 
		$ \widehat{\triangle}_nF(\Bbbk)$ is called the $n^{th}$ derivative (or $n^{th}$  Goodwillie derivative) of $F$ and is denoted $\partial_nF.$
	\end{enumerate}
	This definition holds when $\mathcal{D}=Ch_+$ or $Ch$ and the ground field $\Bbbk$ is of any characteristics.
\end{definition}

\section{Examples: Computing the Goodwillie derivatives}\label{Section Example computing Goodwillie derivatives}
In this section, we show how one can compute the Goodwillie derivatives for a couple of functors. 
\begin{example}
	The computation below shows that the Goodwillie derivatives of the identity functor $Id:\text{Alg}_\mathcal{O}  \longrightarrow \text{Alg}_\mathcal{O}  $ is given by: $\partial_*Id\simeq\mathcal{O}.$  
	\begin{align*}
	\partial_nId\simeq&\underset{p_i\rightarrow \infty}{\text{hocolim } } s^{-p_1-...-p_n}cr_nI(\mathcal{O}(\Sigma^{p_1}\Bbbk), ...,\mathcal{O}(\Sigma^{p_n} \Bbbk) )\\
	=&\underset{p_i\rightarrow \infty}{\text{hocolim }}  s^{-p_1-...-p_n} thofib(\underline{n}-T\mapsto   \mathcal{O}(\underset{i\in T}{\oplus}\Sigma^{p_i}\Bbbk) )\\
	\cong&\underset{p_i\rightarrow \infty}{\text{hocolim } } s^{-p_1-...-p_n}   \mathcal{O}(\underset{i\in \underline{n}}{\oplus}\Sigma^{p_i}\Bbbk) \\
	=&\underset{p_i\rightarrow \infty}{\text{hocolim } } s^{-p_1-...-p_n}  \underset{r\geq 0}{\bigoplus}  (\mathcal{O}(r)\otimes_{\Sigma_r}(\underset{i\in \underline{n}}{\oplus}s^{p_i}\Bbbk)^{\otimes r}) \\
	=& \mathcal{O}(n)\otimes_{\Sigma_n}(\Bbbk)^{\otimes n}\cong \mathcal{O}(n).
	\end{align*}
	We use an analogue computation to obtain $\partial_*\Sigma^{\infty}\Omega^{\infty}\simeq B(\mathcal{O}).$ 
\end{example}

The next example is transformed into a lemma. Let $V$ be a finite non negatively graded chain complex. By finite, we mean of finite dimension in each degree and bounded above. We define the functor 
\begin{center} $ N\Bbbk Hom_{Ch_+}(V\otimes N\Bbbk \triangle^{\bullet}, -): Ch_+ \longrightarrow Ch_+$\end{center}
where, $N: sAb \longrightarrow Ch_+$ is the normalization functor and 
$\Bbbk Hom_{Ch_+}(V\otimes N\Bbbk \triangle^{\bullet}, W)$ denotes the free simplicial $\Bbbk$-vector space generated by the simplicial set $Hom_{Ch_+}(V\otimes N\Bbbk \triangle^{\bullet}, W);$

\begin{mylemma}\label{Lemma Computing the derivatives of Representable functor}
	Let $V\in Ch_+.$ Then we have the quasi-isomorphism (in $Ch$)
	\begin{center}
		$ \partial_n N\Bbbk Hom_{Ch_+}(V\otimes N\Bbbk \triangle^{\bullet}, -) \simeq \underline{hom}(V, \Bbbk)^{\otimes n}$
	\end{center}
\end{mylemma}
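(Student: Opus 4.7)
The plan is to exploit the fact that $F := N\Bbbk\,Hom_{Ch_+}(V\otimes N\Bbbk\triangle^\bullet,-)$ is \emph{exponential}: it converts direct sums into tensor products. Combined with Corollary~\ref{Derivatives}, this will reduce the computation to a single stable calculation on an Eilenberg-MacLane type simplicial vector space.

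First I would establish the exponential property. Starting from the canonical splitting of hom-sets
\begin{equation*}
Hom_{Ch_+}(V\otimes N\Bbbk\triangle^{\bullet},W_1\oplus W_2) \;\cong\; Hom_{Ch_+}(V\otimes N\Bbbk\triangle^{\bullet},W_1)\,\times\,Hom_{Ch_+}(V\otimes N\Bbbk\triangle^{\bullet},W_2),
\end{equation*}
and using that the free simplicial $\Bbbk$-vector space functor carries products of sets to levelwise tensor products, Eilenberg--Zilber yields a natural quasi-isomorphism $F(W_1\oplus W_2)\simeq F(W_1)\otimes F(W_2)$. Setting $F(W)=\Bbbk\oplus\tilde F(W)$ (split by the zero-map basepoint), the exponential relation expands to
\begin{equation*}
\tilde F\Bigl(\textstyle\bigoplus_{i=1}^{n} W_i\Bigr)\;\simeq\;\bigoplus_{\emptyset\neq S\subseteq\underline{n}}\,\bigotimes_{i\in S}\tilde F(W_i),
\end{equation*}
so by the co-cross-effect formula of Lemma~\ref{Lemma Cross-effect equal Co-Cross-effect} only the summand $S=\underline{n}$ survives, giving $cr_n F(W_1,\ldots,W_n)\simeq\bigotimes_{i=1}^{n}\tilde F(W_i)$.

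Next, since the tensor product over $\Bbbk$ commutes with filtered homotopy colimits and shifts, Corollary~\ref{Derivatives} reduces the proof to the single identification
\begin{equation*}
\underset{p\to\infty}{\mathrm{hocolim}}\;s^{-p}\,\tilde F(\Sigma^p\Bbbk)\;\simeq\;\underline{hom}(V,\Bbbk).
\end{equation*}
By the Dold--Kan adjunction, $Hom_{Ch_+}(V\otimes N\Bbbk\triangle^\bullet,\Sigma^p\Bbbk)\cong U\Gamma(\tau_{\geq 0}\,\underline{hom}(V,\Bbbk)[p])$, which for $p$ larger than the top degree of $V$ becomes $U\Gamma(\underline{hom}(V,\Bbbk)[p])$, the underlying simplicial set of an Eilenberg-MacLane-type object. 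In characteristic zero the normalised $\Bbbk$-chains on such a simplicial vector space compute the free graded-commutative algebra, so $\tilde F(\Sigma^p\Bbbk)\simeq\bigoplus_{k\geq 1}\mathrm{Sym}^k(\underline{hom}(V,\Bbbk)[p])$. After the shift $s^{-p}$ the linear term contributes $\underline{hom}(V,\Bbbk)$ in bounded degrees independent of $p$, whereas each $\mathrm{Sym}^k$ with $k\geq 2$ is concentrated in degrees tending to $+\infty$ with $p$; the filtered hocolim therefore retains only the linear piece.

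The hard part will be this last step: the identification of $N\Bbbk U\Gamma(-)$ with a symmetric algebra relies on characteristic zero (divided powers spoil it otherwise), and one must carefully track shifts to verify that the higher-weight summands genuinely escape every fixed chain degree in the colimit. Steps one and two above are essentially formal consequences of the hom/product adjunction combined with the cross-effect machinery developed in Section~\ref{Section Characterisation of homogeneous functors}.
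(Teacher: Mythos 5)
Your argument is correct over a field of characteristic $0$ and arrives at the same key intermediate formula as the paper, namely $cr_nF(W_1,\dots,W_n)\simeq\bigotimes_{i}\widetilde F(W_i)$ with $\widetilde F(W)=N\widetilde{\Bbbk}\,Hom_{Ch_+}(V\otimes N\Bbbk\triangle^{\bullet},W)$, but by a different packaging: you split $F=\Bbbk\oplus\widetilde F$ and expand the exponential property $F(W_1\oplus W_2)\simeq F(W_1)\otimes F(W_2)$, whereas the paper feeds the cube of inclusions into the co-cross-effect of Lemma~\ref{Lemma Cross-effect equal Co-Cross-effect}, observes that the total homotopy cofiber is the strict total cofiber, identifies it with a smash product, and lets $\Bbbk(-)$ convert that into a tensor product of the $\widetilde{\Bbbk}Hom$'s; these are essentially the same computation. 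The genuine divergence is in the stable step $\underset{p\to\infty}{\mathrm{hocolim}}\;s^{-p}\widetilde F(\Sigma^{p}\Bbbk)\simeq\underline{hom}(V,\Bbbk)$: the paper uses only the Hurewicz theorem --- the unit $A[p]\to\widetilde{\Bbbk}A[p]$ and the natural retraction $l\colon\widetilde{\Bbbk}A[p]\to A[p]$ are $2p$-connected, hence become equivalences after applying $\Omega^{p}$ and passing to the colimit --- while you invoke the full characteristic-zero identification $N\widetilde{\Bbbk}A\simeq\bigoplus_{k\geq1}\mathrm{Sym}^{k}(\pi_*A)$ and discard the summands with $k\geq 2$ by a connectivity count. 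Your version is more explicit but, as you yourself flag, strictly rational; the paper's phrasing isolates exactly the connectivity input needed (though it is worth noting that your analysis makes visible why one should be cautious about any claim of characteristic independence here: in positive characteristic stable classes such as Bocksteins survive the colimit). Two points to tighten. First, knowing each term $s^{-p}\widetilde F(\Sigma^{p}\Bbbk)$ up to quasi-isomorphism is not by itself enough: you must also control the transition maps of the directed system, since a colimit of groups each isomorphic to $\underline{hom}(V,\Bbbk)_d$ can still vanish; the clean fix, which is what the paper does, is to compare the whole system to $\{\Omega^{p}A[p]\}$ via the natural projection $l$, whose fibre is precisely your $\mathrm{Sym}^{\geq 2}$ part. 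Second, your reduction to a one-variable colimit rests on Definition~\ref{definition_derivative} together with exactness of $\otimes_{\Bbbk}$ and its commutation with filtered colimits rather than on Corollary~\ref{Derivatives}, and it silently uses that $V$ is finite and bounded above (the standing hypothesis stated just before the lemma) so that $\underline{hom}(V,\Bbbk)[p]$ lies in $Ch_+$ for large $p$ and the higher symmetric powers escape every fixed degree.
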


Before we give the proof of this quasi-isomorphism,  we remind the following fact which seem to be a classical construction:
Let $p\in \mathbb{N},$ $A$ be a simplicial $\Bbbk$-vector space and consider the following notations:
\begin{enumerate}
	\item [-] We write $A[p]$ to mean the simplicial $\Bbbk$-vector space  given levelwise by $A[p]_n:=A_n\otimes \Bbbk s^p.$
	\item[-] If $(X_\bullet, *)$ is a pointed simplicial set then $\widetilde{\Bbbk}X_\bullet:= \Bbbk X_\bullet/\Bbbk *$
\end{enumerate} 
$A[p]$ is a $p$-connected Kan complex (as any simplicial abelian group), thus the Hurewicz map $A[p]\overset{h}{\longrightarrow} \widetilde{\Bbbk}A[p],$ which is in fact the unit of the adjoint pair $\Bbbk(-): sVect_\Bbbk\rightleftarrows sSet: U ,$ is $2p$-connected. The Hurewicz theorem stated on this current form appears in \cite[Thm 3.7]{Goerss1999} for abelian groups.

In addition, considering the natural projection $l:\widetilde{\Bbbk}A[p]  \longrightarrow A[p], 
\underset{i}{\oplus} x_i  \longmapsto \underset{i}{\Sigma} x_i.$

since the composite $A[p] \overset{h}{\longrightarrow }\widetilde{\Bbbk}A[p] \overset{l }{\longrightarrow} A[p]$ is the identity on $ A[p]\backslash \{0\},$ we deduce that the map $l$ is also $2p$-connected. Therefore the map $\Omega^{p}\widetilde{\Bbbk}A[p] \overset{\Omega^{p}(l)}{\longrightarrow}     \Omega^{p}A[p]$
 is $p$-connected and the map 
\begin{center}
	$\underset{p\rightarrow \infty}{\text{hocolim }}\Omega^{p}\widetilde{\Bbbk}A[p] \overset{ }{\longrightarrow}   \underset{p\rightarrow \infty}{\text{hocolim }}  \Omega^{p}A[p]$
\end{center}
is a weak equivalence of simplicial abelian groups. Now using the fact the the functor $N$ is a left and right Quillen functor of the Dold Kan correspondence we deduce the quasi-isomorphism

\begin{equation}\label{Equation weak equivalence simplicial abelian group}
\underset{p\rightarrow \infty}{\text{hocolim }}\Omega^{p}N\widetilde{\Bbbk}A[p] \overset{ }{\longrightarrow}   \underset{p\rightarrow \infty}{\text{hocolim }}  \Omega^{p}NA[p]
\end{equation}


\begin{Proof of Lemma derivative representable functor}
	
	We use Lemma \ref{Lemma Cross-effect equal Co-Cross-effect} to obtain the quasi-isomorphism:
	\begin{center}
		$cr_n(N\Bbbk Hom_{Ch_+}(V\otimes N\Bbbk \triangle^{\bullet},  -))(W_1, ..., W_n)\simeq$\\ $\text{thocofib }(\underline{n} \supseteq T \mapsto N\Bbbk Hom_{Ch_+}(V\otimes N\Bbbk \triangle^{\bullet},  \underset{i\in T}{\oplus} W_{i}     ))$
	\end{center}	
	On the other hand the functors $N:sAb \longrightarrow Ch_+$ and $\Bbbk(-):sSet\longrightarrow sAb $ are left Quillen functors, we therefore have the equivalences 
	\begin{align*}
	\text{thocofib }( N\Bbbk Hom_{Ch_+}(V\otimes N\Bbbk \triangle^{\bullet},  \underset{i\in T}{\oplus} W_{i}     ))
	& \simeq  N\Bbbk  \text{ thocofib}( Hom_{Ch_+}(V\otimes N \Bbbk \triangle^{\bullet},  \underset{i\in T}{\oplus} W_{i}     ))\\
	&\simeq  N\Bbbk  \text{ thocofib}( \underset{i\in T}{\oplus}Hom_{Ch_+}(V\otimes N \Bbbk \triangle^{\bullet},  W_{i}     ))
	\end{align*}
	Since the maps in the $\underline{n}$-cube of pointed simplicial sets: $T \longmapsto \underset{i\in T}{\oplus}Hom_{Ch_+}(V\otimes N \Bbbk \triangle^{\bullet},  W_{i} )$
	are inclusions, the total  homotopy colimit is the strict total cofiber (tcofib), and computation shows (inductively) that 
	\begin{center}
		$\text{tcofib }( \underset{i\in T}{\oplus}Hom_{Ch_+}(V\otimes N \Bbbk \triangle^{\bullet},  W_{i} )) \cong $\\ $   N\Bbbk (Hom_{Ch_+}(V\otimes N \Bbbk \triangle^{\bullet}, W_{1}       ) \wedge ... \wedge  Hom_{Ch_+}(V\otimes N \Bbbk \triangle^{\bullet},  W_{n} )) \cong $\\
		$    N(\widetilde{\Bbbk} Hom_{Ch_+}(V\otimes N \Bbbk \triangle^{\bullet}, W_{1}       ) \otimes ... \otimes  \widetilde{\Bbbk} Hom_{Ch_+}(V\otimes N \Bbbk \triangle^{\bullet},  W_{n} ) )$
	\end{center}
	We then conclude the quasi-isomorphism:
	\begin{equation}\label{Equation Cross effect Representable decomposition}
	cr_n( \widetilde{Ch}_+(V,-))(W_1, ..., W_n)\simeq  N\widetilde{\Bbbk} Hom_{Ch_+}(V\otimes N \Bbbk \triangle^{\bullet}, W_{1}       ) \otimes ... \otimes  N\widetilde{\Bbbk} Hom_{Ch_+}(V\otimes N \Bbbk \triangle^{\bullet},  W_{n}        )
	\end{equation}
	

	If $V$ is bounded below degree $k,$  we have
	\begin{align*}
	Hom_{Ch_+}(V\otimes N \Bbbk \triangle^{\bullet}, s^{p+k}\Bbbk)&\cong  Hom_{Ch_+}( N \Bbbk \triangle^{\bullet}, \underline{hom}(V,s^{p+k}\Bbbk)) \\
	& \cong Hom_{Ch_+}( N \Bbbk \triangle^{\bullet}, \underline{hom}(V,\Bbbk)\otimes s^{p+k}\Bbbk)\\
	& \overset{(1)}{\underset{\simeq}{\longleftarrow}} Hom_{Ch_+}( N \Bbbk \triangle^{\bullet}, \underline{hom}(V,\Bbbk)\otimes s^{k}\Bbbk)[p]
	\end{align*}
	where the weak equivalence $(1)$ is given by the weak equivalence of simplicial vector spaces 
	\begin{center}
		$Hom_{Ch_+}( N \Bbbk \triangle^{\bullet}, \underline{hom}(V,\Bbbk)\otimes s^{k}\Bbbk)\otimes Hom_{Ch_+}( N \Bbbk \triangle^{\bullet},  s^{p}\Bbbk)\simeq$\\
		$Hom_{Ch_+}( N \Bbbk \triangle^{\bullet}, \underline{hom}(V,\Bbbk)\otimes s^{p+k}\Bbbk)$
	\end{center}
	defined in \cite[ (2.8), p 295]{SS03}.
	
	Now, when we replace $A$ in the map (\ref{Equation weak equivalence simplicial abelian group}) with $Hom_{Ch_+}( N \Bbbk \triangle^{\bullet}, \underline{hom}(V,\Bbbk)\otimes s^{k}\Bbbk)$ and compose it with $\Omega^k(-),$ we get the quasi-isomorphisms
\begin{align*}
 \underset{p\rightarrow \infty}{\text{hocolim }}\Omega^{p}N\widetilde{\Bbbk}Hom_{Ch_+}( N \Bbbk \triangle^{\bullet}, \underline{hom}(V,\Bbbk)\otimes s^{p}\Bbbk) \simeq\\ \underset{p\rightarrow \infty}{\text{hocolim }}\Omega^{p+k}N\widetilde{\Bbbk}Hom_{Ch_+}( N \Bbbk \triangle^{\bullet}, \underline{hom}(V,\Bbbk)\otimes s^{k}\Bbbk)[p]\simeq \\
 \underset{p\rightarrow \infty}{\text{hocolim }}  \Omega^{p+k}NHom_{Ch_+}( N \Bbbk \triangle^{\bullet}, \underline{hom}(V,\Bbbk)\otimes s^{k}\Bbbk)[p] \simeq \\
 \underset{p\rightarrow \infty}{\text{hocolim }}  \Omega^{p+k} \underline{hom}(V,\Bbbk)\otimes s^{p+k}\simeq   \underline{hom}(V,\Bbbk)
\end{align*}	
	Using this above equivalence, we consider the specific case	$W_i= s^{p_i}\Bbbk$	in  Equation (\ref{Equation Cross effect Representable decomposition}) and apply the functor $\underset{p_i \rightarrow \infty}{\text{hocolim} }$ to the left-hand and right-hand side of this same equation, we get
	the quasi-isomorphism $ \partial_n N\Bbbk Hom_{Ch_+}(V\otimes N\Bbbk \triangle^{\bullet}, -) \simeq \underline{hom}(V, \Bbbk)^{\otimes n}.$

\end{Proof of Lemma derivative representable functor}

\section{Chain rule on the composition  of two functors }\label{Section Chain rule on the composition}

This section intends to describe the chain rule property that have the Goodwillie derivatives when we compose two functors.

\begin{theorem}\cite[Thm 1.15]{Ching10}\label{Chain_rule_ching}
	Let $F,G: Ch\longrightarrow Ch$ be homotopy functors, and suppose that $F$ is finitary, then 
	\begin{center}
		$\partial_{*}(FG)\simeq \partial_{*}(F)\circ \partial_{*}(G)$
	\end{center}
\end{theorem}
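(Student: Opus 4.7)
The plan is to follow Ching's original argument from \cite{Ching10} essentially verbatim, checking that the category $Ch$ provides a setting with the same relevant structure as spectra. The point is that $Ch$ is a stable symmetric monoidal model category in which $\Sigma^{\infty} = \Omega^{\infty} = \mathrm{Id}$, so by Corollary \ref{Derivatives} the Goodwillie derivatives of a homotopy functor $H: Ch \longrightarrow Ch$ assemble into a symmetric sequence $\partial_*H$ of chain complexes satisfying $D_n H(X) \simeq (\partial_n H \otimes X^{\otimes n})_{h\Sigma_n}$. In this form, the composition product of symmetric sequences defined in Section \ref{Section Preliminaries} is exactly the natural candidate for $\partial_*(FG)$.

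First I would unfold $\partial_n(FG)$ using its definition
\begin{center}
$\partial_n(FG) = \widehat{\triangle}_n(FG)(\Bbbk) = \underset{p_i \to \infty}{\mathrm{hocolim}} \, s^{-p_1 - \dots - p_n} \, cr_n(FG)(\Sigma^{p_1}\Bbbk, \dots, \Sigma^{p_n}\Bbbk),$
\end{center}
and then analyze $cr_n(FG)(Y_1,\dots,Y_n)$ by first replacing $F$ by its Taylor tower and, for each $k$, substituting the equivalence $D_kF(Z) \simeq (\partial_kF \otimes Z^{\otimes k})_{h\Sigma_k}$ with $Z = G(\underset{T}{\amalg} Y_i)$. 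The finitariness of $F$ is used precisely here: to commute $F$ past the filtered homotopy colimits implicit in the stabilization, invoking Lemma \ref{Lemma Filtered hocolim commutes with finite limits}.

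The combinatorial core of the argument is then to expand $G(\cdot)^{\otimes k}$ multilinearly via the derivatives $\partial_{n_j}G$. Using multilinearity of cross-effects (Lemma \ref{Multilinear_assembly_map}) together with the multilinear chain rule for $r$-tuples of linear functors, each ordered partition $n = n_1 + \dots + n_k$ contributes a factor $\partial_k F \otimes \partial_{n_1}G \otimes \dots \otimes \partial_{n_k}G$, and taking $\Sigma_k$-coinvariants reproduces exactly the formula
\begin{center}
$(\partial_*F \circ \partial_*G)(n) = \bigoplus_{k \geq 1} \partial_k F \underset{\Sigma_k}{\otimes} \Big(\bigoplus_{n_1 + \dots + n_k = n} \partial_{n_1}G \otimes \dots \otimes \partial_{n_k}G \Big)$
\end{center}
from the composition product in Section \ref{Section Preliminaries}, with the correct $\Sigma_n$-action coming from permutations of $(n_1,\dots,n_k)$ and of inputs within each block.

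The main obstacle is the careful homotopical bookkeeping: one must interchange the filtered homotopy colimits from the stabilizations with the infinite direct sum defining the composition product, and track $\Sigma_n$-equivariance throughout the substitution $F \rightsquigarrow \bigoplus_k D_kF$. This is the technical heart of Ching's proof in \cite{Ching10}, and transfers to our setting because $Ch$ satisfies the same structural requirements used there: it is stable, proper, admits all homotopy limits and colimits, filtered homotopy colimits commute with finite homotopy limits (Lemma \ref{Lemma Filtered hocolim commutes with finite limits}), and the classification of homogeneous functors has the same form (Corollary \ref{Derivatives}). No operadic structure on $\partial_*(\mathrm{Id})$ is needed for the bare statement, although one could alternatively obtain the result as a formal consequence of a bar-cobar style duality as in Ching--Arone.
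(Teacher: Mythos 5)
This statement is not proved in the paper at all: it is imported by citation as \cite[Thm 1.15]{Ching10}, and the paper's only original contribution on this point is the reduction in Corollary \ref{Chain_rule_Chain_Complexes} of the general case to this quoted theorem. So there is no internal proof to compare your attempt against; your proposal is, in substance, the same move the paper makes --- defer the theorem to Ching --- supplemented by a sketch of what his argument does and why $Ch$ satisfies the hypotheses. As such it is acceptable exactly to the extent that the citation is.

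Two points in your sketch would need repair if you intended it as more than a pointer to the reference. First, ``replacing $F$ by its Taylor tower'' is not a legitimate substitution for an arbitrary finitary homotopy functor: the tower need not converge, and what is actually proved in \cite{Ching10} is that $\partial_n(FG)$ depends only on the $n$-th Taylor polynomials (equivalently the homogeneous layers) of $F$ and $G$; that reduction is itself a nontrivial step, not a rewriting. Second, Lemma \ref{Multilinear_assembly_map} of this paper is an assembly-map statement, $W\otimes L(V)\simeq L(W\otimes V)$, for an already multilinear functor $L$; it does not produce the Fa\`a di Bruno--type decomposition of $cr_n(FG)$ into cross-effects of $F$ evaluated on cross-effects of $G$, which is the combinatorial heart of the theorem and which your outline (like the paper) leaves entirely to the reference. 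Your closing checklist of structural properties of $Ch$ (stability, properness, Lemma \ref{Lemma Filtered hocolim commutes with finite limits}, the form of Corollary \ref{Derivatives}) is the right sanity check for transporting Ching's argument, but it is a verification of hypotheses, not a proof of the statement.
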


\begin{corollary}\label{Chain_rule_Chain_Complexes}
	Let $\mathcal{C}$ or $\mathcal{D}$ be any of  the model categories  $\text{Alg}_\mathcal{O}, Ch,$ or $Ch_+,$ and 
	$F,G$ be  homotopy and reduced  functors:  $\mathcal{C}       \overset{G}{\longrightarrow} Ch \overset{F}{\longrightarrow} \mathcal{D},$  and suppose that $F$ is finitary, then 
	\begin{center}
		$\partial_{*}(FG)\simeq \partial_{*}(F)\circ \partial_{*}(G)$
	\end{center}
\end{corollary}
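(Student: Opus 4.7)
The plan is to reduce this statement to Ching's chain rule (Theorem \ref{Chain_rule_ching}) for functors $Ch \to Ch$ by replacing $F$ and $G$ with homotopy functors $Ch \to Ch$ having the same Goodwillie derivatives. The reduction proceeds in two independent steps, for the target and the source categories.

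\textbf{Reducing the target.} Inspection of Definition \ref{definition_derivative} and the formula for $\widehat{\triangle}_n$ shows that the derivatives only see the composite $U \circ F$, where $U$ is the relevant forgetful/inclusion functor $\mathcal{D} \to Ch$ (namely $U$ itself for $\text{Alg}_\mathcal{O}$, the inclusion $I$ for $Ch_+$, and the identity for $Ch$). Thus replacing $F$ by $UF: Ch \to Ch$, and likewise $FG$ by $U(FG) = (UF)G$, leaves $\partial_*F$ and $\partial_*(FG)$ unchanged.

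\textbf{Reducing the source.} Define $\tilde{G}: Ch \to Ch$ by $\tilde{G} = G$ if $\mathcal{C}=Ch$, $\tilde{G} = G\circ \Omega^{\infty}$ if $\mathcal{C}=Ch_+$, and $\tilde{G} = G \circ \mathcal{O}(\Omega^{\infty}(-))$ if $\mathcal{C}=\text{Alg}_\mathcal{O}$. I claim that $\partial_*G \simeq \partial_*\tilde{G}$ as symmetric sequences. The key fact is that both $\Omega^{\infty} = red_0$ and the free algebra functor $\mathcal{O}(-)$ commute with coproducts (the latter as a left adjoint), so cross effects satisfy
\[
cr_n(\tilde{G})(V_1,\dots,V_n) \;\cong\; cr_n(G)\bigl(\mathcal{O}(\Omega^{\infty}V_1),\dots,\mathcal{O}(\Omega^{\infty}V_n)\bigr).
\]
Evaluating at $V_i = \Sigma^{p_i}\Bbbk \in Ch_+$ (so $\Omega^{\infty}V_i = V_i$) and using Corollary \ref{Suspension_algebra_is_free} to identify $\mathcal{O}(\Sigma^{p_i}\Bbbk) \simeq \Sigma^{p_i}\mathcal{O}(\Bbbk)$ in $\text{Alg}_\mathcal{O}$, the hocolim defining $\partial_n\tilde{G}$ recovers $\widehat{\triangle}_n G(\mathcal{O}(\Bbbk)) = \partial_n G$. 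The $Ch_+$ case is the obvious simplification. The same computation applied to $FG$ and $F\tilde{G}$ gives $\partial_*(FG) \simeq \partial_*(F\tilde{G})$.

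\textbf{Applying Theorem \ref{Chain_rule_ching}.} Now $F, \tilde{G}: Ch \to Ch$ are homotopy and reduced, and $F$ remains finitary (the reduction on the source does not involve $F$). Ching's chain rule yields
\[
\partial_*(F\tilde{G}) \;\simeq\; \partial_*(F) \circ \partial_*(\tilde{G}).
\]
Combining this with the two equivalences $\partial_*(FG)\simeq \partial_*(F\tilde{G})$ and $\partial_*(\tilde{G}) \simeq \partial_*(G)$ proved above, and with $\partial_*F$ unchanged by the target reduction, we obtain $\partial_*(FG) \simeq \partial_*(F) \circ \partial_*(G)$.

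\textbf{Main obstacle.} The principal technical content lies in the source-reduction claim $\partial_*G \simeq \partial_*\tilde{G}$, specifically in justifying the chain of equivalences for cross effects. The crucial input is the identification $\mathcal{O}(\Sigma^{p_i}\Bbbk)\simeq \Sigma^{p_i}\mathcal{O}(\Bbbk)$ from Corollary \ref{Suspension_algebra_is_free}, together with the fact that $\mathcal{O}(-)$ commutes with the finite coproducts appearing in the cross effect. The characteristic-zero hypothesis inherited from earlier in the paper enters exactly through Corollary \ref{Suspension_algebra_is_free} (and only when $\mathcal{C}=\text{Alg}_\mathcal{O}$), which is compatible with what the statement requires.
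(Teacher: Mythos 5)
Your proposal is correct and follows essentially the same route as the paper: the paper also reduces to Ching's theorem by rewriting $\partial_n(FG)$ as $\partial_n(UF\,G\,\mathcal{O}(-)\,red_0)$, which is exactly your replacement of $F$ by $UF$ and of $G$ by $\tilde{G}=G\circ\mathcal{O}(\Omega^{\infty}(-))$, using the same inputs ($\mathcal{O}(-)$ preserving coproducts and $\Sigma^{p}\mathcal{O}(\Bbbk)\simeq\mathcal{O}(\Sigma^{p}\Bbbk)$ from Remark \ref{Remark The suspension of free algebra}). The only cosmetic difference is that you spell out the cross-effect identification that the paper leaves implicit.
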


\begin{proof}
	We prove this result only when $\mathcal{C}=\text{Alg}_\mathcal{O}$ since the other cases are obtained from a slight adaptation and following the same idea. If $U: \text{Alg}_\mathcal{O}\longrightarrow Ch$ denotes the forgetful functor, then we make the following computation:
	\begin{align*}
	\partial_n(FG)\simeq &  \widehat{\triangle}_n(FG)( \mathcal{O}(\Bbbk))\\
	=	&\underset{p_i\rightarrow \infty}{ \text{ hocolim }} s^{-p_1-...-p_n}cr_n(FG)(\mathcal{O}(\Sigma^{p_1}\Bbbk), ...,\mathcal{O}(\Sigma^{p_n} \Bbbk) )\\
	\cong	& \underset{p_i\rightarrow \infty}{ \text{ hocolim }} s^{-p_1-...-p_n}cr_n(UFG)(\mathcal{O}(\Sigma^{p_1}\Bbbk), ...,\mathcal{O}(\Sigma^{p_n} \Bbbk) )\\
	\cong	& \underset{p_i\rightarrow \infty}{ \text{ hocolim }} s^{-p_1-...-p_n}cr_n(UFG\mathcal{O}(-))( \Sigma^{p_1}\Bbbk, ...,\Sigma^{p_n} \Bbbk )\\
	\simeq &  \underset{p_i\rightarrow \infty}{ \text{ hocolim } }s^{-p_1-...-p_n}cr_n(UFG \mathcal{O}(-)red_0 )(\Sigma^{p_1}\Bbbk, ...,\Sigma^{p_n} \Bbbk)\\
	\simeq &  \partial_n (UFG\mathcal{O}(-)red_0 )
	\end{align*}
	The two functors $Ch\overset{G\mathcal{O}(-)red_0}{\longrightarrow} Ch \overset{UF}{\longrightarrow}Ch$ are homotopy functors and  $UF$ preserves filtered homotopy colimits. Therefore, we use Theorem \ref{Chain_rule_ching} to claim that 
	\begin{align*}
	\partial_*(FG)\simeq & \partial_* (UF) \circ \partial_*(G \mathcal{O}(-)red_0 )\\
	\simeq & \partial_* (F) \circ \partial_*(G).
	\end{align*}
	
\end{proof}

				\bibliographystyle{alpha}
				\bibliography{mybibliography}

			\end{document}